\newtheorem{thm}{Theorem}
\newtheorem{lem}{Lemma}
\newtheorem{prop}{Proposition}
\newtheorem{conj}{Conjecture}
\newtheorem{coro}{Corollary}
\newtheorem{defi}{Definition}
\theoremstyle{definition}					%Remarques et notations en non italique
\newtheorem{rem}{Remark}
\newtheorem*{nota}{Notation}
\newcommand{\R}{\mathbb{R}}
\newcommand{\N}{\mathbb{N}}
\newcommand{\Z}{\mathbb{Z}}
\newcommand{\Q}{\mathbb{Q}}
\def\imod#1{\allowbreak\mkern10mu({\operator@font mod}\,\,#1)}
\title{Rational Approximation and Arithmetic Progressions}
\author{Faustin ADICEAM\\
   Department of Mathematics, Logic House,\\ 
   National University of Ireland at Maynooth,\\
   Maynooth, Co. Kildare, Ireland.\\
   Email~:~\texttt{faustin.adiceam@maths.nuim.ie}}
\date{}
\begin{document}
\maketitle

\begin{abstract}
A reasonably complete theory of the approximation of an irrational by rational fractions whose numerators and denominators lie in prescribed arithmetic progressions is developed in this paper. Results are both, on the one hand, from a metrical and a non--metrical point of view and, on the other, from an asymptotic and also a uniform point of view. The principal novelty is a Khintchine type theorem for uniform approximation in this context. Some applications of this theory are also discussed.
\end{abstract}

\section{Introduction and statement of the main results}

Let $\xi$ denote an irrational number.

The celebrated Dirichlet's theorem in Diophantine approximation asserts that, for any real number $Q\ge 1$, there exist integers $p,q\in\Z$ such that 
\begin{align}\label{diruchlet} 
\left|\xi - \frac{p}{q}\right|\, \le\, \frac{1}{qQ} \quad \mbox{and} \quad 1\le q \le Q.
\end{align}
This \emph{uniform} version implies in particular an \emph{asymptotic} one, namely the fact that there exist arbitrarily large integer values of $q$ such that the inequality $|\xi- p/q|< q^{-2}$ holds true for some integer $p$ depending on $q$. Hurwitz has shown that the stronger inequality 
\begin{align*}
\left|\xi - \frac{p}{q}\right|\, \le\, \frac{1}{\sqrt{5}q^2}
\end{align*}
happens infinitely often and that the constant $1/\sqrt{5}$ in the right--hand side could not be chosen any smaller for the result to hold true for all irrationals.

In general, establishing a result concerning asymptotic approximation (and, \emph{a fortiori}, uniform approximation) when the numerators and/or the denominators of the rational approximants lie in given infinite sets turns out to be difficult (see e.g. Chapter 4 in~\cite{harmanmetricnbtheory} or~\cite{gorodkadyrov} and the references therein for some examples). This paper is concerned with the case where both the numerators and the denominators belong to prescribed arithmetic progressions. The known results in this context (which will be recalled), whether they are metrical, non--metrical, uniform or asymptotic, are very incomplete at the moment.

First some notation is fixed~: throughout, $a,b,r$ and $s$ will refer to integers satisfying the constraints
\begin{align}\label{contraintesdebase}
a\ge1, \quad b\ge1, \quad 0\le r \le a-1 \quad \mbox{and} \quad 0\le s \le b-1.
\end{align}
The problem under consideration amounts to finding rational approximations to an irrational $\xi$ with numerators (resp.~denominators) of the form $am+r$ (resp.~$bn+s$) for integers $n$ and $m$. Note that the case $r=s=0$ is settled in a straightforward manner~: applying Dirichlet's and Hurwitz's theorems to the irrational $b\xi/a$, it is easy to see that, on the one hand, for any integer $Q\ge b$, there exist integers $m$ and $n$ such that 
\begin{equation}\label{unifhomognonmetric}
\left|\xi-\frac{am}{bn} \right|\,\le\, \frac{ab}{(bn)Q} \quad \mbox{and} \quad 1\le bn\le Q
\end{equation} 
and that, on the other, there exist infinitely many integers $m$ and $n$ such that the inequality 
\begin{equation}\label{asymoptimhomognonmetric}
\left|\xi - \frac{am}{bn} \right|\, \le \, \frac{ab}{\sqrt{5}(bn)^2}
\end{equation} 
holds true infinitely often, the constant $(ab)/\sqrt{5}$ being optimal uniformly in $\xi\in\R\backslash\Q$. As will be apparent from the coming results, the fact that the constant $ab$ in the right--hand side of~(\ref{unifhomognonmetric}) may be chosen uniformly in $\xi\in\R\backslash\Q$ is typical of the ``homogeneous'' case $r=s=0$.

It is stressed that not all the theorems in this introduction are stated in full generality in order to keep the discourse coherent with respect to the problem under consideration.

\subsection{The theory of asymptotic approximation}

The first result deals with non--metrical asymptotic approximation. 

\begin{thm}\label{approxasympnonmetr}
Given an irrational $\xi$, there exist infinitely many integers $m$ and $n$ such that 
\begin{align}\label{inegapproxasympnonmetr}
\left|\xi - \frac{am+r}{bn+s} \right|\, \le \, \frac{ab}{4(bn+s)^2}
\end{align}
provided that $(r,s)\neq (0,0)$.
\end{thm}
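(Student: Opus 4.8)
The plan is to recast the statement as a problem of \emph{inhomogeneous} one--dimensional Diophantine approximation and then to invoke Minkowski's theorem from that circle of ideas. Set $\alpha := b\xi/a$ and $\beta := (s\xi - r)/a$, and for $x\in\R$ let $\|x\|$ denote the distance from $x$ to the nearest integer. A direct computation shows that, for any integer $n$ and for the integer $m$ nearest to $\alpha n + \beta$,
\begin{equation*}
\left|\xi - \frac{am+r}{bn+s}\right| \;=\; \frac{\bigl|(bn+s)\xi - (am+r)\bigr|}{|bn+s|} \;=\; \frac{a}{|bn+s|}\,\bigl\|\alpha n + \beta\bigr\|,
\end{equation*}
so that~(\ref{inegapproxasympnonmetr}) holds for this choice of $m$ exactly when $\|\alpha n + \beta\| \le \frac{b}{4\,|bn+s|}$. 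It therefore suffices to exhibit infinitely many integers $n$ with this property, and since $\frac{b}{4|bn+s|}$ is asymptotic to $\frac{1}{4|n|}$ as $|n|\to\infty$, this is — up to the precise value of the constant — precisely the question of inhomogeneous approximation of $\beta$ by the numbers $n\alpha$.

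Next I would isolate the arithmetic reason the hypothesis $(r,s)\neq(0,0)$ is exactly the right one. The claim is that $\beta\notin\Z+\alpha\Z$ if and only if $(r,s)\neq(0,0)$: indeed $\beta = j + k\alpha$ with $j,k\in\Z$ rewrites as $(s-kb)\xi = r+ja$, and, $\xi$ being irrational, this forces $s = kb$ and $r = -ja$, which by the constraints~(\ref{contraintesdebase}) is possible only for $k=j=0$. Under the hypothesis we are thus in the non--degenerate regime where Minkowski's inhomogeneous approximation theorem applies, giving $\liminf_{|n|\to\infty}|n|\cdot\|n\alpha-\beta\| \le \tfrac14$. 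It is worth recording here that $\beta = \tfrac{s}{b}\alpha - \tfrac{r}{a}$ lies in the $\Q$--span of $1$ and $\alpha$; this extra rigidity is what should rule out the genuinely extremal configurations of the inhomogeneous theory.

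To close the argument I would exploit the asymmetry of the required bound between the two ``tails''. Replacing $n$ by $-N$ with $N\ge1$ one has $\|\alpha n+\beta\| = \|N\alpha - \beta\|$ while $|bn+s| = bN - s < bN$, so on this tail the inequality to be proved, namely $\|N\alpha-\beta\| \le \frac{b}{4(bN-s)}$, is \emph{implied} by $N\cdot\|N\alpha-\beta\| < \tfrac14$ as soon as $s\ge1$. Hence it suffices to produce infinitely many $N$ with $\|N\alpha-\beta\| < \tfrac1{4N}$, and this follows from the sharp form of the inhomogeneous theorem valid when $\beta\notin\Z+\alpha\Z$ (which can be obtained directly from the continued fraction expansion of $\alpha$ together with the three--distance theorem). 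The residual case $s=0$, hence $r\neq0$, where $\frac{b}{4|bn+s|}$ equals $\tfrac1{4|n|}$ identically and no slack is available, must be treated using that $\beta = -r/a$ is a non--integral rational, which excludes the extremal situation and yields infinitely many $n$ with $|n|\cdot\|\alpha n+\beta\| \le \tfrac14$.

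The crux — and the step I expect to be the real obstacle — is exactly the sharpness of the constant: the $4$ in~(\ref{inegapproxasympnonmetr}) is the constant $\tfrac14$ of Minkowski's inhomogeneous theorem and cannot be improved, so the proof must engage with the borderline behaviour of $|n|\cdot\|n\alpha-\beta\|$ rather than with any soft consequence of it. In particular the ``positive tail'' is of no use for this reason, and everything has to be wrung out of the ``negative tail'' (and, in the degenerate case $s=0$, out of the rational nature of $\beta$).
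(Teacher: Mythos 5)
Your reduction is correct as far as it goes: with $\alpha=b\xi/a$, $\beta=(s\xi-r)/a$, inequality~(\ref{inegapproxasympnonmetr}) amounts to $\|n\alpha+\beta\|\le \frac{b}{4|bn+s|}$ for infinitely many $n$, and your verification that $\beta\notin\Z+\alpha\Z$ exactly when $(r,s)\neq(0,0)$ is the same non--degeneracy observation the paper uses. The genuine gap is at the step you yourself flag as the crux. The classical (two--sided) Minkowski theorem gives infinitely many integers $n$ of \emph{unspecified sign} with $|n|\,\|n\alpha+\beta\|<\tfrac14$; this neither implies the required inequality when those $n$ are positive and $s\ge1$ (there the needed bound is \emph{stronger} than $\tfrac1{4n}$, by a margin of order $s/(bn)$, and ``$<\tfrac14$'' with values creeping up to $\tfrac14$ does not close it), nor does it guarantee infinitely many solutions on the negative side, which is the only side on which your slack argument works. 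So what you actually need is a sign--restricted (one--sided) Minkowski theorem with the \emph{same} constant $\tfrac14$, or a quantitative strict--inequality margin on the unfavourable side; neither is a standard citable statement, and your parenthetical claim that it ``can be obtained directly from the continued fraction expansion of $\alpha$ together with the three--distance theorem'' does not hold up: the three--distance theorem only bounds gaps by quantities of size about $1/q_{k-1}$ for $N\asymp q_k$, so it yields constants depending on the partial quotients (this is precisely how the paper proves Proposition~\ref{propunifinhomgbad} for badly approximable numbers, with constant $2ab(M+2)$), not the universal $\tfrac14$. Ruling out the borderline configuration in which all good approximations occur on the wrong side with $n\|n\alpha+\beta\|\to\tfrac14^{-}$ is exactly the extremal analysis you defer, so the proof is incomplete in the main case $s\ge1$ (your residual case $s=0$, $r\neq0$ is fine, since there $|bn+s|=b|n|$ and the plain two--sided theorem suffices).

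For comparison, the paper avoids the sign/weight mismatch altogether by staying in two variables: it applies Minkowski's theorem on the product of two \emph{inhomogeneous} linear forms (Cassels, Theorem~1, p.~46) to $L_1(x,y)=by$ with shift $\eta=s$ and $L_2(x,y)=b\xi y-ax$ with shift $\nu=s\xi-r$, of determinant $\pm ab$. The product is then exactly $|bn+s|\cdot|\xi(bn+s)-(am+r)|\le ab/4$ --- the correct weight $|bn+s|$ (of either sign) appears automatically, with the sharp constant --- and the infinitude of solutions comes from the refinement in that theorem that one may additionally impose $|L_2(m,n)+\nu|<\epsilon$, which produces new pairs as $\epsilon\to0$ because $\nu=s\xi-r\notin b\xi\Z+a\Z$. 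If you want to salvage your one--dimensional route, you would have to prove the one--sided statement (or the needed margin) yourself, which is essentially a reproof of the two--form theorem; the two--variable formulation is the cleaner tool here.
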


This theorem has already been proved in some particular cases, for example with the additional constraint $a=b$ (cf.~\cite{uchiyama}) or with a  constant weaker than $(ab)/4$ on the right--hand side of~(\ref{inegapproxasympnonmetr}) (cf.~\cite{hartmanconditionsupp}). See also~\cite{Elsnermetricresult} and the references therein for further details and partial results in this direction.

\begin{rem}\label{rationelsapproxasympnonmetr}
Given the trivial relation $|u/v - p/q| \ge 1/(vq)$ satisfied by any two distinct rationals $u/v$ and $p/q$, an inequality as in~(\ref{inegapproxasympnonmetr}) can be satisfied by a rational $u/v$ infinitely often if, and only if, there exists $\alpha\in\Z$ (and hence infinitely many of those) such that $\alpha u \equiv r \imod{a}$ and $\alpha v \equiv s \imod{b}$, that is, from Lemma~\ref{lemelementaire} in subsection~\ref{soussectionresultatsauxiliaires} below, if, and only if, the three conditions $\gcd(bu, av)\,| \, (us-vr)$, $\gcd(u,a)\,|\, r$ and $\gcd(v,b)\,|\, s$ are simultaneously met.
\end{rem}

The next theorem deals with asymptotic approximation from a metrical point of view~: it provides a Khintchine type result in the setup under consideration. In what follows, $\lambda$ denotes the one--dimensional Lebesgue measure. As usual, a set is said to be of \emph{full measure} if the measure of its complement is null.

\begin{thm}\label{alakhint}
Let $\Psi~: [1, \infty) \rightarrow (0, \infty)$ be a non--increasing continuous function. Set $$\mathcal{K}\left(\Psi\right):= \left\{\xi\in\R \; : \; \left|\xi - \frac{am+r}{bn+s}\right|\,<\, \Psi(bn+s) \mbox{ i.o.}\right\},$$ where ``i.o.'' stands for ``infinitely often''.

Then,
\begin{align*}
\lambda\left(\mathcal{K}\left(\Psi\right)\right) = 
\begin{cases}
\texttt{ZERO} & \mbox{if } \sum_{n=1}^{\infty}n\Psi(bn+s) < \infty,\\\\
\texttt{FULL} & \mbox{if } \sum_{n=1}^{\infty}n\Psi(bn+s) = \infty.
\end{cases}
\end{align*}
Furthermore, the result still holds true if the additional condition $\gcd(am+r, bn+s) =\gcd(a,b,r,s)$ is also imposed in the definition of the set $\mathcal{K}\left(\Psi\right)$.
\end{thm}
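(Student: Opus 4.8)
The plan is to reduce the problem to the classical Khintchine theorem. First I would set up the change of variable. Given the target fraction $\frac{am+r}{bn+s}$, write $\xi = \frac{a}{b}\eta$; then $|\xi - \frac{am+r}{bn+s}| < \Psi(bn+s)$ becomes, after multiplying by $b/a$, an approximation of $\eta$ by fractions of a constrained shape. More precisely, the denominators $bn+s$ run through an arithmetic progression, and after scaling the error function by the constant $b/a$ the problem is to understand the Lebesgue measure of the set of $\eta$ for which $|\eta - \frac{am+r}{bn+s}| < \frac{b}{a}\Psi(bn+s)$ infinitely often. Since a fixed affine change of variable preserves null sets and full-measure sets, it suffices to work with this reformulation.

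The convergence half is the routine direction: the set $\mathcal{K}(\Psi)$ is contained in a $\limsup$ of intervals of radius $\Psi(bn+s)$ centred at the admissible fractions, and for each fixed $n$ there are at most $O(bn+s)$ relevant numerators $am+r$ lying in any bounded interval (one per residue class per unit length). Summing the lengths gives $\sum_n (bn+s)\Psi(bn+s) < \infty$ under the hypothesis (up to the constant $b/a$), so Borel--Cantelli yields $\lambda(\mathcal{K}(\Psi)) = 0$; the extra gcd condition only shrinks the set, so convergence still gives measure zero.

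The divergence half is the heart of the matter, and the main obstacle is that one cannot directly quote Khintchine because of the congruence constraints on both numerator and denominator, together with the optional coprimality condition. The strategy I would use is to exhibit, for each denominator $q = bn+s$ in the progression, enough admissible numerators $p = am+r$ that the resulting union of intervals has essentially the same measure (up to a fixed constant factor depending only on $a,b$) as the union one would get in the unrestricted problem with denominator $q$. Concretely: for a fixed $q$ coprime to $a$, the map $m \mapsto am+r$ hits every residue class mod $q$ as $m$ varies, so the fractions $\frac{am+r}{q}$ are $\frac{a}{q}$-dense in any interval; one then packs the interval $[0,1)$ (or a large interval) with $\gg q$ such fractions, spaced $\asymp 1/q$ apart, and each carries an interval of radius $\asymp \Psi(q)$. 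The local measure is then $\asymp q\Psi(q)$, matching the classical count. To pass from "positive proportion of the $\limsup$" to "full measure" I would invoke a zero--one law: either the Gallagher-type ergodicity of $\limsup$ sets of this kind, or a quasi-independence-on-average estimate feeding into the Borel--Cantelli lemma of Erd\H{o}s--R\'enyi, or a density/Lebesgue-point argument showing the $\limsup$ set is invariant under a dense group of translations. The divergence of $\sum_n n\Psi(bn+s)$ guarantees these estimates close.

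For the gcd refinement $\gcd(am+r,bn+s) = \gcd(a,b,r,s)$, I would handle it by a sieving argument: among the $\asymp q$ admissible numerators for a given $q$, a positive proportion (uniformly in $q$, by an inclusion--exclusion over small prime divisors, i.e. an Eratosthenes--Legendre sieve) satisfy the coprimality-type condition, since forbidding a common factor $d > 1$ with $d \mid q$ removes only a $1/d$-fraction of residues and $\prod_{p}(1-1/p)$-type tails are bounded below once one restricts to the relevant primes. This only changes the implied constants, not the convergence/divergence dichotomy, so the same zero--one argument applies. The subtlety to watch is the interplay between the condition $\gcd(a,b,r,s) \mid$ everything forced by the progressions and Remark \ref{rationelsapproxasympnonmetr}'s compatibility conditions; I expect this to be a bookkeeping matter rather than a genuine difficulty, handled once and for all by Lemma \ref{lemelementaire}.
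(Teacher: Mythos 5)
The convergence half of your plan is fine and matches the paper (a routine Borel--Cantelli estimate). The divergence half, however, has a genuine gap: everything hinges on the step you only gesture at, namely passing from ``for each admissible denominator $q=bn+s$ the union of intervals has local measure $\asymp q\Psi(q)$'' to full measure of the limsup set. None of the three mechanisms you list is available as stated. The set $\mathcal{K}(\Psi)$ is invariant only under translation by $a\Z$, a discrete group, so the ``dense group of translations'' zero--one argument does not apply. Gallagher/Cassels type zero--one laws are proved for unrestricted (or coprime) numerators; with congruence constraints on \emph{both} numerator and denominator they would themselves have to be re-established, which is not a quotable fact. And the Erd\H{o}s--R\'enyi route requires precisely the pairwise quasi-independence estimates $\lambda(A_q\cap A_{q'})\ll\lambda(A_q)\lambda(A_{q'})$ on average over denominators in the progression --- this overlap estimate is the heart of any such proof (it is essentially what Harman's paper does), and asserting ``the divergence guarantees these estimates close'' does not supply it. The paper takes a different route that packages this difficulty: it proves that the fractions $(am+r)/(bn+s)$ satisfying the gcd normalization form an \emph{optimal regular system} in $(0,a)$ --- i.e.\ in any interval $I$ one finds $\gg\lambda(I)K$ points of index $\le K$, pairwise separated by $\gg 1/K$ --- and then invokes Beresnevich's divergence theorem for regular systems. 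The counting input is a M\"obius-inversion lemma for numerators $am+r\le x$ coprime to $q$ (Lemma~\ref{lemme1sommeeulerarith}) together with an asymptotic for $\sum\varphi(un+v)$ over an arithmetic progression (Lemma~\ref{lem2sommeeulerarith}); the paper also stresses that the naive adaptation of the standard regularity proof for the rationals (via Dirichlet's theorem applied to each $\xi$) is unavailable here, since by Corollary~\ref{resultatprincikhintchiunif} almost no $\xi$ satisfies a Dirichlet-type statement in this constrained setting.

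A second, more local error: your sieve claim that ``a positive proportion, uniformly in $q$'' of the admissible numerators satisfy the coprimality condition is false as stated. If a prime $\pi$ divides both $a$ and $r$ and $\pi\mid q$, then $\pi\mid am+r$ for every $m$, so \emph{no} admissible numerator is coprime to $q$; the loss is not a $1/\pi$-fraction of residues but everything. After the harmless normalization $\gcd(a,b,r,s)=1$, the paper handles exactly this by first selecting, via the Chinese remainder theorem, a sub-progression $(un+v)_{n\ge0}$ of the denominators $bn+s$ all of whose terms are coprime to $\gcd(a,r)$, and establishing optimal regularity along that sub-progression only (which suffices). Your plan needs this selection step; without it the uniform lower bound it rests on fails for infinitely many denominators whenever $\gcd(a,r)>1$.
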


In the case where congruential constraints are imposed only on the denominators of the appro\-xi\-mants (which corresponds to the case $a=1$ and $r=0$ in our setup), Theorem~\ref{alakhint} follows without much difficulty from the well--known theorem of Duffin and Schaeffer in Diophantine approximation as noticed by S.Hartman and Sz\"usz in~\cite{hartszusz}. On the other hand, in the case where both the nu\-me\-ra\-tors and the de\-no\-mi\-na\-tors belong to pre--assigned arithmetic progressions, the question was studied by G.Harman in~\cite{harmanrestrictedI} from the perspective of counting the number of solutions to Diophantine inequalities. Therefore, the main novelty in Theorem~\ref{alakhint} is the fact that the result holds true with the extra condition $\gcd(am+r, bn+s) =\gcd(a,b,r,s)$, which was a question left unanswered in~\cite{harmanrestrictedI}. It should be noted that the main feature of the proof of Theorem~\ref{alakhint} consists of establishing the optimal regularity of the set $\left\{(am+r)/(bn+s)\right\}_{n,m\in\Z}$ in $\R$. While this is a result interesting in its own right that can be used to simplify a great deal of G.Harman's proof, it does not follow in the same way as the optimal regularity of the rationals in $\R$ as soon as $r\neq 0$ or $s\neq 0$ (see subsection~\ref{asympmetric} for definitions and details).

An application of the Mass Transference Principle (see subsection~\ref{asympmetric}) allows one to translate Theorem~\ref{alakhint} into a result on the Hausdorff measure and dimension of the set $\mathcal{K}\left(\Psi\right)$. Here, $\mathcal{H}^t$ stands for the $t$--dimensional Hausdorff measure and $\dim$ for the Hausdorff dimension.

\begin{coro}\label{hausdmetric}
Let $t\in (0,1)$. Then, under the assumptions of Theorem~\ref{alakhint}, 
\begin{align*}
\mathcal{H}^t\left(\mathcal{K}\left(\Psi\right)\right) = 
\begin{cases}
0 & \mbox{if } \sum_{n=1}^{\infty}n\Psi(bn+s)^t < \infty,\\\\
\infty  & \mbox{if } \sum_{n=1}^{\infty}n\Psi(bn+s)^t = \infty.
\end{cases}
\end{align*}
In particular, $\dim \left(\mathcal{K}\left(\Psi\right)\right) = \inf \left\{t>0\; : \; \sum_{n=1}^{\infty}n\Psi(bn+s)^t < \infty \right\}$.

This result still holds true with the additional condition $\gcd(am+r, bn+s) =\gcd(a,b,r,s)$ in the definition of the set $\mathcal{K}\left(\Psi\right)$.
\end{coro}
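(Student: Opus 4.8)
\section*{Proof proposal for Corollary~\ref{hausdmetric}}

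The plan is to derive Corollary~\ref{hausdmetric} from Theorem~\ref{alakhint} by combining a direct covering estimate for the convergence case with an application of the Mass Transference Principle (cf.~subsection~\ref{asympmetric}) for the divergence case. Throughout set $c_{n,m}:=(am+r)/(bn+s)$, so that $\mathcal{K}(\Psi)=\limsup_{n\to\infty}A_n$ with $A_n:=\bigcup_{m\in\Z}B\!\left(c_{n,m},\Psi(bn+s)\right)$ a lim sup set of intervals. One may assume $\Psi(bn+s)\to 0$: in the contrary case $\Psi$ is bounded below by a positive constant, every interval contains $c_{n,m}$ for infinitely many pairs $(n,m)$ (by Theorem~\ref{approxasympnonmetr} if $(r,s)\neq(0,0)$, trivially otherwise), hence $\mathcal{K}(\Psi)=\R$ and the assertion holds because $t<1$.

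For the convergence case, assume $\sum_{n\ge1}n\,\Psi(bn+s)^t<\infty$ and fix a bounded interval $I$. For each $n$ with $\Psi(bn+s)\le1$, an interval $B(c_{n,m},\Psi(bn+s))$ meeting $I$ has its centre in the fixed interval $I':=\{x:\operatorname{dist}(x,I)\le1\}$; the number of admissible $m$, those with $am+r\in(bn+s)I'$, is thus at most $|I'|(bn+s)/a+1\ll_{a,b,I}n$. Since $\mathcal{K}(\Psi)\cap I\subseteq\bigcup_{n\ge N}A_n$ for every $N$ and the diameters of these covering intervals tend to $0$, the $t$--dimensional Hausdorff sum of the cover is $\ll_{a,b,I,t}\sum_{n\ge N}n\,\Psi(bn+s)^t$, which tends to $0$ as $N\to\infty$. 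Hence $\mathcal{H}^t\!\left(\mathcal{K}(\Psi)\cap I\right)=0$ for every bounded $I$, and therefore $\mathcal{H}^t\!\left(\mathcal{K}(\Psi)\right)=0$ by countable subadditivity. Imposing the constraint $\gcd(am+r,bn+s)=\gcd(a,b,r,s)$ only removes intervals from the cover, so the conclusion persists.

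For the divergence case, assume $\sum_{n\ge1}n\,\Psi(bn+s)^t=\infty$ and put $\widetilde{\Psi}:=\Psi^{\,t}$, which is again a non--increasing continuous function from $[1,\infty)$ into $(0,\infty)$ since $t>0$. As $\sum_{n\ge1}n\,\widetilde{\Psi}(bn+s)=\sum_{n\ge1}n\,\Psi(bn+s)^t=\infty$, Theorem~\ref{alakhint} gives that the set $\mathcal{K}(\widetilde{\Psi})=\limsup_{n\to\infty}\bigcup_{m\in\Z}B\!\left(c_{n,m},\Psi(bn+s)^t\right)$ has full Lebesgue measure. Now $\mathcal{K}(\widetilde{\Psi})$ is exactly the lim sup set obtained from $\mathcal{K}(\Psi)$ by raising each radius to the power $t$, that is, the family associated by the Mass Transference Principle to the dimension function $f(x)=x^t$. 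Working inside a fixed bounded interval (so that, as in the previous paragraph, only finitely many $c_{n,m}$ are relevant at each level $n$ and the radii form a genuine null sequence) and noting that $x\mapsto x^{-1}f(x)=x^{t-1}$ is monotonic for $t\in(0,1)$, the Mass Transference Principle yields $\mathcal{H}^t\!\left(B\cap\mathcal{K}(\Psi)\right)=\mathcal{H}^t(B)$ for every ball $B$. Since $\mathcal{H}^t(B)=\infty$ when $t<1$, this gives $\mathcal{H}^t\!\left(\mathcal{K}(\Psi)\right)=\infty$. The version with $\gcd(am+r,bn+s)=\gcd(a,b,r,s)$ is handled identically, invoking the corresponding statement of Theorem~\ref{alakhint} to secure the full--measure hypothesis of the Mass Transference Principle.

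Finally, the formula for the Hausdorff dimension follows at once: writing $s_0:=\inf\{t>0:\sum_{n}n\,\Psi(bn+s)^t<\infty\}$, every $t\in(0,1)$ with $t>s_0$ has $\mathcal{H}^t(\mathcal{K}(\Psi))=0$, so $\dim\mathcal{K}(\Psi)\le t$, while every $t\in(0,1)$ with $t<s_0$ has $\mathcal{H}^t(\mathcal{K}(\Psi))=\infty$, so $\dim\mathcal{K}(\Psi)\ge t$; letting $t\to s_0$ from either side, and recalling $\dim\mathcal{K}(\Psi)\le1$, gives $\dim\mathcal{K}(\Psi)=s_0$. I expect the only delicate point to be the clean identification of the scaled family produced by Theorem~\ref{alakhint} with the family on which the Mass Transference Principle operates, together with the reduction to a bounded interval needed to make that principle literally applicable; the remaining steps are routine.
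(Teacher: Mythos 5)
Your proposal is correct and follows essentially the same route as the paper: the convergence half is the standard covering estimate (which the paper leaves to the reader), and the divergence half is obtained by noting that $\sum_n n\Psi(bn+s)^t=\infty$ makes Theorem~\ref{alakhint}, applied to $\Psi^t$, certify the Lebesgue full-measure hypothesis~(\ref{lebesguemasstransfcondi}) of the Mass Transference Principle (Theorem~\ref{masstransfprinci}) with $\Omega=[0,a]$ and intervals $J_i$ of radius $\Psi(bn+s)$ centred at the fractions $(am+r)/(bn+s)$, after which $\mathcal{H}^t(\mathcal{K}(\Psi))=\mathcal{H}^t(\Omega)=\infty$ drops out. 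One cosmetic caveat: you reuse the symbol $\widetilde{\Psi}$, which the paper has already reserved for $Q\mapsto Q\Psi(Q)$ in~(\ref{psitilde}); choose a different name for $\Psi^t$ to avoid a clash.
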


\subsection{The theory of uniform approximation}

Even though the introduction of the concept of \emph{hat exponent} (see e.g.~\cite{buglaursturm}) has made the distinction between uniform problems and asymptotic problems more systematic in Diophantine approximation, results on uniform approximation under constraints remain quite rare in the literature~: one can for instance mention the recent work of Chan in~\cite{chan} on uniform approximation by sums of two rationals or the work of Dodson, Rynne and Vickers showing in~\cite{dodrynvic} that if $\mathcal{M}\subset \R^k$ ($k\ge 3$) belongs to a general class of smooth manifolds then, for almost all points lying on $\mathcal{M}$ (with respect to the induced measure), Dirichlet's Theorem cannot be infinitely improved in some sense made precise in the paper. 

However, in the case where the numerators and the denominators of the approximants are subject to congruential constraints as those under consideration so far, a reasonably complete theory of uniform approximation can be established, both from a metrical and a non--metrical point of view. This is the subject of this subsection. To this end, a few definitions are first introduced.

\begin{defi}\label{defiunifapproxarith}
Given a function $\Psi~: [1, \infty) \rightarrow (0, \infty)$, a real number $\xi$ is said to admit a \emph{$\Psi$--uniform $(a,b,r,s)$--approximation} if there exists $Q_0\ge 1$ such that, for any integer $Q \ge Q_0$, there are integers $m$ and $n$ such that $$\left|\xi - \frac{am+r}{bn+s} \right|\, \le \, \frac{\Psi(Q)}{bn+s} \quad \mbox{and } \quad 1\le bn+s \le Q.$$ The set of real numbers admitting a $\Psi$--uniform $(a,b,r,s)$--approximation will be denoted by $\mathcal{U}(\Psi)$.

Furthermore, $\xi\in\R$ will be said to admit a uniform $(a,b,r,s)$--approximation with exponent $\mu\in [0,1]$ if there exists $c>0$ such that $\xi \in \mathcal{U}\left(Q\mapsto cQ^{-\mu}\right)$.
\end{defi}

From a non--metrical point of view, a necessary and sufficient condition, explicit in terms of the continued fraction expansion, can be given for an irrational $\xi$ to be uniformly approximable at order $\Psi$ up to an explicit constant depending on $\xi$ (that is, for there to exist $c:=c(\xi)$ such that $\xi\in\mathcal{U}(c\Psi)$). In what follows, the sequence of the partial quotients of $\xi$ (resp.~of its convergents) will be denoted by $(a_k(\xi))_{k\ge 0}$ or by $(a_k)_{k\ge 0}$ for the sake of simplicity (resp.~by $(p_k(\xi)/q_k(\xi))_{k\ge 0}$ or by $(p_k/q_k)_{k\ge 0}$), with $a_0\in\Z$ and $a_k\in\N$ for $k\ge 1$ (here and throughout, $\N$ will refer to the set of \emph{positive} integers). The necessary and sufficient condition is technical by nature and is concerned with the indices $k\ge 1$ for which the relations  
\begin{align}\label{conditionfondaapproxunif}
\gcd(p_{k-1},a)\,|\, r, \quad \gcd(q_{k-1},b)\,|\, s \quad \mbox{ and } \quad \gcd(bp_{k-1},aq_{k-1})\,|\, (sp_{k-1}-rq_{k-1}),
\end{align}
are not simultaneously satisfied.

\begin{thm}\label{CNSapproxunifarithnonmetr}
\sloppy Let $\xi$ be an irrational number given by its continued fraction expansion $\xi =\left[a_0;a_1, \dots \right]$. Let $\Psi~: [1, \infty) \rightarrow (0, \infty)$ be a continuous non--increasing function. Set 
\begin{equation}\label{psitilde}
\widetilde{\Psi}~: Q\in [1, \infty) \mapsto Q\Psi(Q)\in(0, \infty)
\end{equation} 
and assume that there exist $\gamma>0$, $\kappa \ge 1$ and $\eta\ge 1$ satisfying
\begin{align}\label{conditiondecroissancepsi}
\inf_{Q\ge 1}\widetilde{\Psi}(Q)\, \ge \,\gamma, \quad \widetilde{\Psi}(Q)\, \le \, \kappa \widetilde{\Psi}(2Q) \quad \mbox{ and } \quad \widetilde{\Psi}(Q)\, \le \, \eta \widetilde{\Psi}(ab(Q+1))\quad \mbox{ for all } Q\,\ge \, 1.
\end{align}
Then there exists a constant $c:=c(\xi)>0$ such that $\xi\in\mathcal{U}(c\Psi)$ if, and only if, there exists an integer $M\ge1$ such that for all indices $k\ge 1$ for which conditions~(\ref{conditionfondaapproxunif}) are \textbf{\emph{not}} met, one has $a_k \,\le \, M \widetilde{\Psi}(q_k)$.

Furthermore,  
\begin{align}\label{cxiexpression}
c(\xi)=8(ab)^2\kappa\eta \max\{4M, \gamma^{-1} \}\quad \mbox{ and } \quad Q_0=ab
\end{align}
are admissible values, where $Q_0$ is the parameter introduced in Definition~\ref{defiunifapproxarith}.
\end{thm}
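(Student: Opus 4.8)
\textbf{Proof proposal for Theorem~\ref{CNSapproxunifarithnonmetr}.}

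The plan is to reduce the statement to an analysis of the convergents $p_k/q_k$ of $\xi$ together with the ``admissible'' rationals among them, i.e.\ those convergents $p_{k-1}/q_{k-1}$ for which conditions~(\ref{conditionfondaapproxunif}) \emph{are} met. First I would recall (or prove in the auxiliary subsection) the elementary reformulation alluded to in Remark~\ref{rationelsapproxasympnonmetr}: a rational $u/v$ can be written in the form $(am+r)/(bn+s)$ after multiplying numerator and denominator by a common integer $\alpha$ if and only if $\gcd(u,a)\mid r$, $\gcd(v,b)\mid s$ and $\gcd(bu,av)\mid(us-vr)$; moreover the smallest such $v$ (hence the quantity governing the cost $1\le bn+s\le Q$) is comparable to $v$ itself up to a factor bounded by $ab$. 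This is why the hypothesis involves $\widetilde\Psi$ evaluated at $q_k$ rather than at $abq_k$, and why the third condition in~(\ref{conditiondecroissancepsi}) (quasi--invariance of $\widetilde\Psi$ under $Q\mapsto ab(Q+1)$) is exactly what is needed to absorb that loss.

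The core of the argument is the classical best--approximation property of convergents recast in this setting. For the sufficiency direction, I would fix $Q\ge Q_0=ab$, locate the index $k$ with $q_k\le Q/(ab) < q_{k+1}$ (roughly), and among the convergents with index $\le k$ pick the one of largest index, say $p_{j}/q_{j}$, that is \emph{admissible} in the sense of~(\ref{conditionfondaapproxunif}). Two cases arise. If the ``bad'' indices between $j$ and $k$ satisfy $a_i\le M\widetilde\Psi(q_i)$, then a telescoping/continuant estimate shows $q_k$ is not much larger than $q_j$ — more precisely $q_k\le \bigl(\prod (a_i+1)\bigr)q_j$ and each factor is controlled by $M\widetilde\Psi(q_i)/q_i\cdot q_i$-type bounds — and the admissible convergent $p_j/q_j$, lifted to $(am+r)/(bn+s)$ with $bn+s\le abq_j\le Q$, already satisfies $|\xi-p_j/q_j|\le 1/(q_jq_{j+1})\le \text{const}\cdot\Psi(bn+s)/(bn+s)$ after using $|\xi-p_j/q_j|\asymp 1/q_j^2\cdot(1/a_{j+1})$ and the doubling/monotonicity of $\widetilde\Psi$. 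If instead $k=j$ (the relevant convergent is itself admissible), the estimate is immediate. Assembling the constants through the three inequalities in~(\ref{conditiondecroissancepsi}) yields the explicit value $c(\xi)=8(ab)^2\kappa\eta\max\{4M,\gamma^{-1}\}$; the factor $\kappa$ comes from passing between $Q$ and $2Q$, the factor $\eta$ from the $ab(Q+1)$ rescaling, the $(ab)^2$ from the two liftings of numerator and denominator, and $\gamma^{-1}$ handles the regime where $\widetilde\Psi$ is near its infimum so that the trivial bound $1\le bn+s$ must be invoked.

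For the necessity direction, I would argue contrapositively: if for every $M$ there are infinitely many bad indices $k$ with $a_k > M\widetilde\Psi(q_k)$, then for $Q$ slightly below $q_k$ the \emph{only} rationals $p/q$ with $q\le Q$ and $|\xi-p/q|\le c\Psi(Q)/q$ are the convergents $p_i/q_i$ with $i\le k-1$ (by the three--distance / best--approximation properties of continued fractions, no non--convergent does better, and convergents of index $\ge k$ are too large in denominator), and among those the admissible ones have denominator $q_i$ with $i< k$; using $|\xi-p_{k-1}/q_{k-1}|\asymp 1/(q_{k-1}q_k)$ and $q_k>a_kq_{k-1}>M\widetilde\Psi(q_{k-1})q_{k-1}$ one checks that $q_{k-1}|\xi-p_{k-1}/q_{k-1}| \gg (M/\text{const})\cdot\Psi(q_{k-1}) \gg c\,\Psi(Q)$ once $M$ exceeds the implied constants, contradicting $\xi\in\mathcal U(c\Psi)$ for that fixed $c$. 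Here I must also rule out that a \emph{non--admissible} convergent $p_i/q_i$ could be used after lifting: by Remark~\ref{rationelsapproxasympnonmetr} such a rational simply cannot be represented as $(am+r)/(bn+s)$ at all, so it is genuinely unavailable, which is the whole point of conditions~(\ref{conditionfondaapproxunif}).

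The main obstacle I anticipate is bookkeeping the admissibility conditions~(\ref{conditionfondaapproxunif}) through the recursion, since the gcd conditions on $(p_{k-1},q_{k-1})$ are not monotone in $k$ and a long block of consecutive bad indices could in principle contain no admissible convergent even though each partial quotient is moderate; handling this requires the quasi--invariance $\widetilde\Psi(Q)\le\eta\widetilde\Psi(ab(Q+1))$ to guarantee that jumping from an admissible index $j$ to the target scale $Q$ costs only a bounded factor, and the doubling condition to compare $\widetilde\Psi(q_j)$ with $\widetilde\Psi(Q)$ across the possibly large gap $q_{j+1},\dots,q_k$. Making the constant in~(\ref{cxiexpression}) come out exactly as stated will be the most delicate part of the routine computation, but conceptually everything rests on the representability criterion of Remark~\ref{rationelsapproxasympnonmetr} plus the standard theory of continued fractions.
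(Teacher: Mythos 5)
Your plan has a genuine gap, which you yourself flag in the last paragraph but do not resolve, and the resolution is the core idea the paper uses.

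For the sufficiency direction you propose to retreat to the last admissible convergent $p_j/q_j$ below the target scale $Q$ and lift it to $(\alpha p_j)/(\alpha q_j)$ with $\alpha\le ab$. This gives $\alpha q_j\,|\xi - p_j/q_j| \asymp ab/q_{j+1}$, which must be $\lesssim\Psi(Q)=\widetilde\Psi(Q)/Q$. When $q_{j+1}\ll Q$ this fails, and that can happen precisely in the scenario you worry about: a long block of consecutive bad indices $j+1,\dots,k$ each with moderate $a_i\le M\widetilde\Psi(q_i)$ still lets $q_k/q_j$ grow without bound (the claim that ``a telescoping/continuant estimate shows $q_k$ is not much larger than $q_j$'' is false unless the \emph{number} of such indices is bounded, which is not assumed). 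The paper never retreats; it stays at the current scale $k$ and parametrizes the candidate $(am+r,bn+s)$ by the unique pair $(u,v)\in\Z^2$ with $am+r = up_{k-2}+vp_{k-1}$ and $bn+s = uq_{k-2}+vq_{k-1}$, so that $q_{k-1}\,|\xi(bn+s)-(am+r)|\asymp |u+v\phi_{k-1}|$. When the convergent is admissible one sets $u=0$ and recovers exactly your lift; when it is not, one chooses a small nonzero $u\in\llbracket 0,ab\rrbracket$ satisfying a single congruence modulo $d=\gcd(bp_{k-1},aq_{k-1})$, solves for $(m,n)$ with $0\le n<aq_{k-1}/d$, and bounds $|u+v\phi_{k-1}|<4ab$. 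The obstruction is absorbed into $u$, and the hypothesis $a_k\le M\widetilde\Psi(q_k)$ is then used \emph{at the bad index $k$ itself}, not by comparing distant scales. This is the step your outline is missing.

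The necessity direction has a related issue. You argue that near $Q\approx q_k$ the only candidate rationals are the convergents $p_i/q_i$, $i\le k-1$, by best-approximation. But $(am+r)/(bn+s)$ need not be in lowest terms, and intermediate fractions and integer multiples thereof are all in play; best-approximation alone does not exclude them. The paper instead again uses the $(u,v)$ decomposition: from Lemma~\ref{lemelementaire}, non-admissibility of index $k_l$ forces $u\neq 0$, hence $|u|\ge1$, and one shows $|v\phi_{k_l-1}|\le 1/2 + o(1)$, so $|u+v\phi_{k_l-1}|$ is bounded below by a constant, which yields the required lower bound on $\Psi(Q_{k_l})^{-1}|\xi(bn+s)-(am+r)|$ as $a_{k_l}/\widetilde\Psi(q_{k_l})\to\infty$. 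That $u\ne0$ is forced is exactly where the obstruction from~(\ref{conditionfondaapproxunif}) enters quantitatively, rather than through a ``this rational is simply unrepresentable'' dichotomy as in your outline.
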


\begin{rem}$\qquad$
\begin{itemize}
\item The existence of $\kappa$ together with the assumption of the monotonicity of $\Psi$ actually implies the existence of $\eta$ in~(\ref{conditiondecroissancepsi}). However, the explicit presence of these two constants makes the definition of $c(\xi)$ in~(\ref{cxiexpression}) more effective.

\item Conditions~(\ref{conditiondecroissancepsi}) should be seen as an attempt to remove any assumption of monotonicity on the function $\widetilde{\Psi}$~: indeed, it is easily checked that they are automatically satisfied if $\widetilde{\Psi}$ is assumed to be non--decreasing (with $\gamma = \widetilde{\Psi}(1)$ and $\kappa= \eta = 1$). 

\item The existence of the constants $\kappa$ and $\eta$ (which is ensured for a fairly large class of functions --- for instance any function rational in $\log Q$ and $Q$) means that the function $\widetilde{\Psi}$ does not admit abrupt variations. It is a weaker assumption that the usual one when trying to remove the assumption of monotonicity~: transposed in this context, the latter would ask that, for every  $c>1$, $\widetilde{\Psi}(cQ)<c\widetilde{\Psi}(Q)$ for all $Q\ge 1$ (see for instance~\cite{casselsapproxdioph} and \S 4.1 in~\cite{crelledettasanju}).

\item The existence of the constant $\gamma$ is a relatively mild restriction. Indeed, it is well--known that if a real number $\alpha$ satisfies Dirichlet's theorem with $(2Q)^{-1}$ as the approximating function instead of $Q^{-1}$ (that is, if the right--hand side of the first inequality in~(\ref{diruchlet}) is replaced by $(2qQ)^{-1}$), then $\alpha$ has to be rational (see for instance Lemma~6 in~\cite{waldrencentdiophapprox} for a proof). This implies in particular that the function $\widetilde{\Psi}$ in Theorem~\ref{CNSapproxunifarithnonmetr} cannot tend to zero.

\item Condition~(\ref{conditionfondaapproxunif}) obviously holds true in the ``homogeneous case'' $r=s=0$, in which case one finds again the aforementioned result on uniform approximation with exponent 1 where the constant $c(\xi)=ab$ was proved to be admissible for all $\xi\in\R\backslash\Q$.

\item Any badly approximable number has uniformly bounded partial quotients regardless of whether condition~(\ref{conditionfondaapproxunif}) is met or not. Therefore, all badly approximable numbers admit a uniform $(a,b,r,s)$--approximation with exponent 1. This shows in particular that the set of real numbers for which Dirichlet's theorem holds true up to a constant in the context of $(a,b,r,s)$--approximation has full Hausdorff dimension. In the case of badly approximable numbers, the existence of a uniform $(a,b,r,s)$--approximation with exponent 1 will be proved to be a direct consequence of the three distance theorem in subsection~\ref{nonmetrunif}.

\item It will be clear that the proof of Theorem~\ref{CNSapproxunifarithnonmetr} can be adapted to show that, given any $\mu\in (0,1]$, there always exists an irrational $\xi$ such that $\xi$ does not admit a uniform $(a,b,r,s)$--approximation with exponent $\mu$ as soon as $r\neq 0$ or $s\neq 0$.
\end{itemize}
\end{rem}

From a metrical point of view, the only known result in the context of uniform $(a,b,r,s)$--approximation seems to be that of S.Hartman who proved in~\cite{hartfeaturediri} that almost no real number satisfies Dirichlet's theorem if the denominators of the approximants were prescribed to be odd. The following corollary of Theorem~\ref{CNSapproxunifarithnonmetr}, which is very much the main result of the paper, provides a reasonably complete answer to this problem. It constitutes the first example of a Khintchine type result in the context of uniform approximation. The reader should note the differences with respect to a standard Khintchine type result as Theorem~\ref{alakhint}.

\begin{coro}\label{resultatprincikhintchiunif}
Let $\Psi~: [1, \infty) \rightarrow (0, \infty)$ be a continuous non--increasing function such that the function $\widetilde{\Psi}$ as defined by~(\ref{psitilde}) is non--decreasing.

If $r\neq 0$ or $s\neq 0$, then
\begin{align*}
\lambda\left(\mathcal{U}\left(\Psi\right)\right) = 
\begin{cases}
\texttt{ZERO} & \mbox{if } \sum_{Q=1}^{\infty}\frac{1}{Q^2\Psi(Q)} = \infty\\\\
\texttt{FULL} & \mbox{if } \sum_{Q=1}^{\infty}\frac{1}{Q^2\Psi(Q)} < \infty.
\end{cases}
\end{align*}
\end{coro}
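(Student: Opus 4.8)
The plan is to deduce Corollary~\ref{resultatprincikhintchiunif} from the non--metrical characterisation provided by Theorem~\ref{CNSapproxunifarithnonmetr}. The strategy is to translate membership in $\mathcal{U}(\Psi)$ (up to a constant that can be absorbed using the hypothesis that $\widetilde{\Psi}$ is non--decreasing, whence the constants $\kappa=\eta=1$ and $\gamma=\widetilde{\Psi}(1)>0$ are admissible) into a condition on the partial quotients $a_k(\xi)$ of $\xi$, and then to apply a Borel--Cantelli / Khintchine type argument to that condition. Concretely, since $\widetilde\Psi$ is non--decreasing, Theorem~\ref{CNSapproxunifarithnonmetr} says that, for a constant $c(\xi)$ depending only on $M$ (and on $a,b,\gamma$), one has $\xi\in\mathcal{U}(c(\xi)\Psi)$ if and only if there exists $M\ge1$ such that $a_k(\xi)\le M\,\widetilde\Psi(q_k(\xi))$ for every index $k$ at which conditions~(\ref{conditionfondaapproxunif}) fail. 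The first step is therefore to argue that $\lambda(\mathcal{U}(\Psi))$ and $\lambda(\mathcal{U}(c\Psi))$ are simultaneously null or full for any fixed $c>0$; this is standard for Khintchine--type statements because replacing $\Psi$ by $c\Psi$ does not change the convergence/divergence of $\sum_Q (Q^2\Psi(Q))^{-1}$, and the relevant sets differ only by sets governed by the same tail.

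The heart of the argument is the following: a real number $\xi$ lies in $\mathcal{U}(\Psi)$ (up to a constant) exactly when the ``bad'' indices --- those $k$ at which the congruential conditions~(\ref{conditionfondaapproxunif}) are violated and simultaneously $a_k(\xi)>M\widetilde\Psi(q_k(\xi))$ --- are finite in number for some $M$. By Borel--Cantelli, $\lambda(\mathcal{U}(\Psi))$ is full precisely when, for almost every $\xi$, only finitely many bad indices occur; it is null precisely when infinitely many occur almost everywhere. One then has to estimate, for each $k$, the measure of the set of $\xi$ for which $k$ is a bad index. This splits into two independent pieces: (i) the ``arithmetic'' event that~(\ref{conditionfondaapproxunif}) fails at step $k$, which depends only on the residues of $p_{k-1},q_{k-1}$ modulo $a$ and $b$ and has a probability bounded below by a positive constant $\delta=\delta(a,b,r,s)$ uniformly in $k$ (this uses the equidistribution of $(p_{k-1},q_{k-1})$ modulo $(a,b)$, a consequence of ergodicity of the Gauss map together with the matrix action of $\mathrm{SL}_2$ on residues, and is exactly where $r\neq0$ or $s\neq0$ enters to keep $\delta$ away from $0$); and (ii) the ``metric'' event $a_k(\xi)>M\widetilde\Psi(q_k(\xi))$, whose conditional probability given the first $k-1$ partial quotients is comparable to $(M\widetilde\Psi(q_k))^{-1}\asymp(M\,q_k\Psi(q_k))^{-1}$ by the classical estimate for the distribution of a partial quotient. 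Summing over $k$ and using the standard fact that $\log q_k\asymp k$ almost everywhere (Lévy's theorem), the convergence of $\sum_k (q_k\Psi(q_k))^{-1}$ is seen to be equivalent to the convergence of $\sum_Q (Q^2\Psi(Q))^{-1}$ via a dyadic/Cauchy condensation comparison, using the monotonicity of $\widetilde\Psi$ to control the number of $k$ with $q_k$ in a given dyadic block.

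The divergence case then requires an independence (or quasi--independence) input: one needs that the bad events at distinct indices $k$ are sufficiently independent for the divergence half of Borel--Cantelli to apply. The arithmetic events~(i) become asymptotically independent along well--separated indices by mixing of the Gauss map, and the metric events~(ii) are genuinely independent of the arithmetic ones conditionally on the past, so a second--moment (Chung--Erdős) argument delivers that infinitely many bad indices occur almost surely when $\sum_Q(Q^2\Psi(Q))^{-1}=\infty$, hence $\lambda(\mathcal{U}(\Psi))=0$. Conversely, in the convergence case, the straightforward Borel--Cantelli bound $\sum_k \lambda\{k\text{ bad}\}<\infty$ shows that almost every $\xi$ has only finitely many bad indices, so the characterisation of Theorem~\ref{CNSapproxunifarithnonmetr} is met with a suitable $M$ (chosen $\xi$--dependently, which is allowed), giving $\lambda(\mathcal{U}(\Psi))$ full. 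I expect the main obstacle to be step~(i): establishing a uniform positive lower bound $\delta(a,b,r,s)>0$ for the probability that~(\ref{conditionfondaapproxunif}) fails at index $k$, together with enough decorrelation of these arithmetic events across indices to run the divergence Borel--Cantelli --- this is precisely the place where the hypothesis $(r,s)\neq(0,0)$ is indispensable, since in the homogeneous case the conditions never fail and $\delta=0$, consistently with the known exponent--$1$ result.
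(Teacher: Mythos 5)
Your outline captures the right reduction: use Theorem~\ref{CNSapproxunifarithnonmetr} (with $\kappa=\eta=1$, $\gamma=\widetilde\Psi(1)$ since $\widetilde\Psi$ is non--decreasing) to turn the uniform--approximation question into a condition on partial quotients, then compare $\sum_k (\widetilde\Psi(q_k))^{-1}$ to $\sum_Q (Q^2\Psi(Q))^{-1}$ via L\'evy's theorem and run a Borel--Cantelli dichotomy. The convergence half of your argument is essentially that of the paper (up to the small point, handled in Remark~\ref{etsikgek_0}, that the constant $M$ is only required for $k$ large). But there are two genuine gaps in the divergence half.

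First, you assert that the event ``conditions~(\ref{conditionfondaapproxunif}) fail at index $k$'' has probability bounded below by a constant $\delta(a,b,r,s)>0$ uniformly in $k$, by appealing to equidistribution of $(p_{k-1},q_{k-1})$ modulo $(a,b)$. This is not justified: there is no plug--in equidistribution statement of that form, and the distribution of $(p_{k-1},q_{k-1})\bmod(a,b)$ is a non--trivial skew--product extension of the Gauss map, not a free parameter. The paper circumvents this entirely by a concrete arithmetic device: Lemma~\ref{lemelemzsurbz} shows that, given the residues of $q_{k-4},q_{k-3}$ modulo $b$, one can pick the next two partial quotients $a_{k-2},a_{k-1}$ in $\llbracket 1,b\rrbracket$ (as a deterministic function of the past) so that $b\mid q_{k-1}$; since $s\neq 0$ and $0\le s\le b-1$, this forces $\gcd(q_{k-1},b)\nmid s$, so the second condition in~(\ref{conditionfondaapproxunif}) fails. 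That is what the generalized Borel--Bernstein Lemma~\ref{lemborelbernstein} (with $d=2$, $\bm f_k$ encoding this prescription) is designed to handle: the set of $\xi$ having infinitely many indices with that two--step prescription followed by a large $a_k$ has \emph{positive} measure, not merely in the limit but with an explicit lower bound.

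Second, and crucially, the second--moment (Chung--Erd\H{o}s) input delivers only a positive--measure set of $\xi$ with infinitely many bad indices, not a full--measure one; the event in question depends on $q_k$, which depends on \emph{all} earlier partial quotients, so it is not a tail event and Kolmogorov's zero--one law does not apply out of the box. Your remark that ``$\lambda(\mathcal{U}(\Psi))$ and $\lambda(\mathcal{U}(c\Psi))$ are simultaneously null or full'' is exactly what needs proving, not something that can be taken as standard. The paper supplies this missing step as a separate zero--one argument: the union $\mathcal{V}(\Psi)=\bigcup_{c\ge 1}\mathcal{U}(c\Psi)$ is invariant (mod $a$) under $x\mapsto tx$ for integers $t\equiv 1\pmod a$, $t\ge a$, and Lemma~\ref{loi01applicationergodique} then forces $\lambda(\mathcal{V}(\Psi))\in\{0,a\}$; combined with $\lambda(\mathcal{V}(\Psi)^c)>0$ from Lemma~\ref{lemborelbernstein}, this yields $\lambda(\mathcal{U}(\Psi))\le\lambda(\mathcal{V}(\Psi))=0$. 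Without this invariance--plus--ergodicity step, your proposal does not close the divergence case.
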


\sloppy Thus, as soon as $r\neq 0$ or $s\neq 0$, almost no real number admits a uniform $(a,b,r,s)$--approximation with exponent 1. This also holds true if one takes $\Psi(Q) = \log Q/ Q$ as the approximating function. On the other hand, almost all real numbers belong to the set $\bigcap_{n=1}^{\infty} \mathcal{U}\left(Q\mapsto (\log Q)^{1+1/n}Q^{-1} \right)$.

\paragraph{}
The paper is organized as follows~: the results on asymptotic approximation (Theorems~\ref{approxasympnonmetr} and~\ref{alakhint} and Corollary~\ref{hausdmetric}) are proved first in section~\ref{asympapproxpreuve}. Then proofs for Theorem~\ref{CNSapproxunifarithnonmetr} and Corollary~\ref{resultatprincikhintchiunif}, dealing with uniform approximation, will be provided in section~\ref{unifapproxpreuve}. Finally, various applications of Diophantine approximation with congruential constraints on both the numerator and the denominator of the approximants will be mentioned in section~\ref{applicationapproxarithm}. In particular, applications to the estimate of some trigonometrical functions and to so--called visibility problems in geometry will be considered.

\section{Proofs of the results related to asymptotic approximation}\label{asympapproxpreuve}

Theorem~\ref{approxasympnonmetr}, Theorem~~\ref{alakhint} and Corollary~\ref{hausdmetric} are proved in this section.

\subsection{Non--metrical point of view}

We first begin with a proof of Theorem~\ref{approxasympnonmetr}. This can actually be seen as a consequence of Minkowski's theorem on the product of two linear forms (see for instance Theorem~1 p.46 in~\cite{casselsapproxdioph}).

\begin{proof}[Proof of Theorem~\ref{approxasympnonmetr}]
Let $\xi\in\R\backslash\Q$. Consider the linear forms $L_1(x,y) = by$ and $L_2(x,y)= b\xi y - a x$ with determinant $\Delta = -ab$ and set $\eta:=s$ and $\nu:=s\xi-r$.  From Minkowski's theorem on the product of two linear forms, there exist integers $m$ and $n$ such that $$\left|L_1(n)+\eta\right|. \left|L_2(m,n)+\nu \right| \, = \, \left|bn+s\right| .\left|\xi(bn+s)-(am+r) \right| \, \le \, \frac{ab}{4}\cdotp$$ As $\xi$ is irrational, given $\epsilon >0$, one can furthermore add the constraint that $$\left|L_2(m,n)+\nu \right| \, := \, \left|(b\xi n-am)+(s\xi-r) \right| \, < \, \epsilon$$ (see for instance Theorem~1 p.46 from~\cite{casselsapproxdioph} for details). Since $\nu:=s\xi -r \not\in b\xi\Z+a\Z$, one gets infinitely many pairs of integers $(m,n)\in\Z^2$ satisfying~(\ref{inegapproxasympnonmetr}) by letting $\epsilon$ tend to zero.
\end{proof}

\begin{rem}\label{generalizthapproxasymnonmetr}
Theorem~\ref{approxasympnonmetr} can be generalized to the case of inhomogeneous approximation in the following way~: for any $\xi\in\R\backslash\Q$ and any $\alpha\in\R$, there exist infinitely many pairs $(m,n)\in\Z^2$ such that the inequality $$\left|\xi(bn+s)-(am+r)+\alpha\right|\, \le \, \frac{ab}{4\left|bn+s\right|}$$ holds if $s\xi+r+\alpha\not\in b\xi\Z + a\Z$ (this follows readily from the previous proof). If, however, $s\xi+r+\alpha\in b\xi\Z + a\Z$, the situation is essentially the same as the ``homogeneous'' case $r=s=0$ and it is easily seen, using for instance~(\ref{asymoptimhomognonmetric}), that the result still holds true upon choosing some constant bigger than $ab/4$ depending on $\alpha$ in the right--hand side of the inequality.
\end{rem}

\paragraph{} 
A natural question related to Theorem~\ref{approxasympnonmetr} is whether the constant $(ab)/4$ appearing on the right--hand side of~(\ref{inegapproxasympnonmetr}) is optimal. This has been proved by Eggan in Theorem 3.2 from~\cite{egganapproxdioph} (following ideas due to Cassels --- see the proof of Theorem II B p.49 in~\cite{casselsapproxdioph}) in the case when the parity of the numerators and the denominators of the rational approximants were prescribed in a non--trivial way (that is, when $a=b=2$ and $r\neq 0$ or $s\neq 0$). It is therefore tempting to set the following conjecture, on which the author plans to come back in the near future.

\begin{conj}\label{conjabsur4}
If $r\neq 0$ or $s\neq 0$, the constant $(ab)/4$ appearing on the right--hand side of~(\ref{inegapproxasympnonmetr}) cannot be improved uniformly in $\xi\in\R\backslash\Q$.
\end{conj}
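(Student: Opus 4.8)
The plan is to recast Conjecture~\ref{conjabsur4} as a one--dimensional inhomogeneous approximation problem and then to exhibit the extremal irrationals by a constructive argument generalising those of Cassels and Eggan.

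For an irrational $\xi$ put
\[
\nu(\xi):=\inf\Bigl\{c>0\ :\ \Bigl|\xi-\tfrac{am+r}{bn+s}\Bigr|\le\tfrac{c}{(bn+s)^{2}}\ \text{for infinitely many }(m,n)\in\Z^{2}\Bigr\},
\]
so that Theorem~\ref{approxasympnonmetr} states $\nu(\xi)\le ab/4$ and the conjecture is the assertion that $\sup_{\xi\in\R\backslash\Q}\nu(\xi)=ab/4$. Since a finite family of pairs $(m,n)$ meeting the defining inequality necessarily has bounded $|bn+s|$, an infinite family forces $|bn+s|\to\infty$; and for a fixed value $q:=bn+s$ the optimal choice of $p:=am+r$ yields $\min_{m}|\xi q-(am+r)|=\operatorname{dist}(\xi q,\,r+a\Z)$. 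Consequently
\[
\nu(\xi)=\liminf_{\substack{|q|\to\infty\\ q\,\equiv\, s\ (b)}}\ |q|\cdot\operatorname{dist}\bigl(\xi q,\,r+a\Z\bigr).
\]
By Remark~\ref{rationelsapproxasympnonmetr}, the hypothesis $(r,s)\neq(0,0)$ is exactly what makes this inhomogeneous problem non--degenerate, i.e.\ what ensures $s\xi-r\notin b\xi\Z+a\Z$. The bound $\nu(\xi)\le ab/4$ is the inhomogeneous Minkowski theorem for the lattice $\{(bn,\,b\xi n-am):n,m\in\Z\}$ of determinant $-ab$ with shift $(s,\,s\xi-r)$; the conjecture is that it is attained in the limit, i.e.\ that for every $\varepsilon>0$ there is an irrational $\xi$ with $\liminf_{|q|\to\infty,\,q\equiv s\,(b)}|q|\operatorname{dist}(\xi q,r+a\Z)>ab/4-\varepsilon$.

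In the parity case $a=b=2$ with $(r,s)\neq(0,0)$ this attainment is Eggan's Theorem~3.2 in~\cite{egganapproxdioph}, obtained by the method Cassels used for the unrestricted inhomogeneous minimum (Theorem~II~B, p.~49 in~\cite{casselsapproxdioph}). I would follow the same blueprint: take $\xi$ to be a quadratic irrational, so that $\theta:=b\xi/a$ has an eventually periodic continued fraction $[a_{0};\overline{a_{1},\dots,a_{\ell}}]$ with a carefully tuned period. By the three distance theorem, the multiples $q\theta$ for $0\le q\le N$ divide the circle, when $q_{k}\le N<q_{k+1}$, into intervals of at most the three lengths $\|q_{k-1}\theta\|$, $\|q_{k}\theta\|$ and their sum; the aim is to choose the period so that, along the whole orbit, the longest such interval --- the ``deep hole'' --- straddles the fixed coset $r+a\Z$ essentially symmetrically, which pins $|q|\operatorname{dist}(\xi q,r+a\Z)$ just above $ab/4$. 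In the language of the reduction step this amounts to making the rational shift $(s,\,s\xi-r)$ an almost-deepest hole of the determinant $-ab$ lattice for the product form $XY$; the corresponding inhomogeneous minimum should then be computable via the reduction theory of indefinite binary quadratic forms and shown to approach $ab/4$, with the period length as the tuning parameter.

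The real obstacle --- and the reason the statement is only conjectured here --- is that optimality in the \emph{unrestricted} inhomogeneous Minkowski theorem does not transfer formally: the slope $\theta$ may be chosen freely, but the shift attached to it is the \emph{prescribed rational} datum (equivalently, the target coset $r+a\Z$ is fixed), and the congruence $q\equiv s\,(b)$ further thins out the admissible denominators. One must therefore arrange that the deep hole of the \emph{restricted} orbit $\{q\theta:q\equiv s\,(b)\}$ lands on the fixed coset $r+a\Z$, compatibly with the moduli on \emph{both} the numerators and the denominators and for arbitrary residues $r$ and $s$. A possibly more flexible route would be a direct block--by--block construction of the continued fraction of $\theta$: at stage $k$ one appends a long block of partial quotients producing a convergent denominator in the prescribed class modulo $b$ and pushing the orbit point nearest $r+a\Z$ to distance as close to $ab/(4|q|)$ as the block length allows, while verifying that no earlier near--extremal configuration is destroyed; letting the blocks grow slowly enough would give $\liminf_{|q|\to\infty}|q|\operatorname{dist}(\xi q,r+a\Z)\ge ab/4-\varepsilon$. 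Controlling, uniformly along the construction, the interaction of the two moduli with the deep--hole geometry of the orbit is the crux: the case $a=b=2$ of~\cite{egganapproxdioph} indicates the flavour of the required bookkeeping, but the general case appears to demand a substantially more intricate argument.
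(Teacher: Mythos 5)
The statement you are addressing is a \emph{conjecture}, and the paper supplies no proof of it: the author explicitly writes only that he ``plans to come back [to it] in the near future.'' Your write--up correctly recognises this and is offered as a sketch of a possible route, not as a claimed proof, so there is nothing in the paper against which to match a completed argument.

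Within that limitation, your reduction is sound. For fixed $q=bn+s$ the optimal numerator gives $\min_m|\xi q-(am+r)|=\operatorname{dist}(\xi q,r+a\Z)$, and since for $|q|$ large at most one $m$ can satisfy the inequality with a small constant, $\nu(\xi)$ is indeed the liminf of $|q|\operatorname{dist}(\xi q,r+a\Z)$ over $q\equiv s\pmod b$, so the conjecture becomes $\sup_{\xi\in\R\setminus\Q}\nu(\xi)=ab/4$. Your identification of the underlying object --- the determinant $-ab$ lattice of the forms $by$ and $b\xi y-ax$ with rational shift $(s,s\xi-r)$ --- agrees exactly with the paper's own Minkowski--product--of--linear--forms proof of Theorem~\ref{approxasympnonmetr}, and your plan to mimic the Cassels/Eggan reduction--theory and continued--fraction constructions is precisely the direction the paper indicates by citing Theorem~3.2 of~\cite{egganapproxdioph} and Theorem~II~B of~\cite{casselsapproxdioph}. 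The obstacle you single out is also the right one: in the unrestricted inhomogeneous Minkowski problem the shift may be tuned along with the slope, whereas here the shift $(s,s\xi-r)$ is rigidly tied to $\xi$ and to the prescribed residues $r,s$, and the congruence $q\equiv s\pmod b$ further thins the orbit; arranging the ``deep hole'' of the restricted orbit to straddle the coset $r+a\Z$ with the right asymptotics, compatibly with both moduli simultaneously, is exactly what Eggan managed for $a=b=2$ and what remains unproved in general. Your proposal is therefore an accurate account of the reduction, of the one known special case, and of the concrete difficulty that keeps the statement conjectural --- but it does not, and does not claim to, close that gap.
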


\subsection{Metrical point of view}\label{asympmetric}

A proof is now provided for Theorem~\ref{alakhint} . The notation from this theorem is kept in this subsection. Since the set $\mathcal{K}\left(\Psi\right)$ is clearly invariant by translation by a multiple of the integer $a$, it suffices to establish the Khintchine type result for the set $\mathcal{K}\left(\Psi\right)\cap (0,a)$ which, for the sake of simplicity, shall still be denoted by $\mathcal{K}\left(\Psi\right)$ in what follows.

The convergence part of Theorem~\ref{alakhint} can be obtained in a classical way as a consequence of the Borel--Cantelli lemma~: details are left to the reader (see for instance p.13 of~\cite{bugeaudlivre1}). In order to prove the divergence part, the concept of an \emph{optimal regular system} is introduced. Recall that $\lambda$ denotes the one--dimensional Lebesgue measure.

\begin{defi}\label{defioptiregulsyst}
Let $E\subset\R$ be a bounded open interval and let $\mathcal{S}:=(\alpha_j)_{j\ge 1}$ denote a sequence of distinct real numbers. 

The sequence $\mathcal{S}$ is an \emph{optimal regular system of points in $E$} if there exist positive constants $c_1$ and $c_2$ depending only on $\mathcal{S}$ and, for any interval $I$ contained in $E$, a number $K_0$ depending on $\mathcal{S}$ and $I$ such that the following property holds true~: for any $K\ge K_0$, there exist integers $1\le i_1< \dots < i_t \le K$ with $\alpha_{i_h}\in I$ for $h=1, \dots , t$ satisfying $$\left|\alpha_{i_h}-\alpha_{i_l}\right|\, \ge \, \frac{c_1}{K} \;\mbox{ for }\; 1\le h\neq l \le t \quad \mbox{and} \quad  t \, \ge \, c_2\lambda(I)K.$$
\end{defi}

The next theorem, due to Beresnevich in~\cite{beres1onapprox} and~\cite{beres2concept} (see also Chapter 6 of~\cite{bugeaudlivre1}), shows that the set of real numbers close to infinitely many points in an optimal regular system satisfies the divergent part of a Khintchine type statement.

\begin{thm}[Beresnevich]\label{thmberesnoptiregusyst}
Let $E$ be a bounded interval and let $\mathcal{S}:=(\alpha_j)_{j\ge 1}$ denote an optimal regular system in $E$. Given a non--increasing continuous function $\Psi~: [1, \infty) \rightarrow (0, \infty)$, define the set $\mathcal{K}_{\mathcal{S}}\left(\Psi\right)$ as $$\mathcal{K}_{\mathcal{S}}\left(\Psi\right):= \underset{j\rightarrow \infty}{\limsup}\left\{\xi\in E \; : \; \left|\xi- \alpha_j\right|\, < \, \Psi(j)\right\}.$$
Then the set $\mathcal{K}_{\mathcal{S}}\left(\Psi\right)$ has full Lebesgue measure if the sum $\sum_{j\ge 1} \Psi(j)$ diverges.
\end{thm}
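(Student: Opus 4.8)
The statement to establish is only the divergence half of Theorem~\ref{thmberesnoptiregusyst} (the convergence half being a routine Borel--Cantelli argument). The plan is to combine the two defining features of an optimal regular system --- numerous points at each scale and uniform separation --- with a local form of the divergence Borel--Cantelli lemma (the Erd\H{o}s--Chung / Kochen--Stone inequality), and then to upgrade the local estimate so obtained to a full measure statement via the Lebesgue density theorem. Concretely, since $\mathcal{K}_{\mathcal{S}}(\Psi)$ is a $G_\delta$ subset of the open set $E$ hence measurable, it suffices to produce a constant $\kappa>0$, depending only on the constants $c_1,c_2$ attached to $\mathcal{S}$, such that $\lambda\big(\mathcal{K}_{\mathcal{S}}(\Psi)\cap I\big)\ge \kappa\,\lambda(I)$ for every interval $I\subseteq E$: were $E\setminus\mathcal{K}_{\mathcal{S}}(\Psi)$ of positive measure, it would admit a density point, around which arbitrarily short intervals $I$ would violate this bound.

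So I would fix $I\subseteq E$ and work at dyadic scales $K_n=2^n$. Writing $B(x,\rho):=(x-\rho,x+\rho)$, for each large $n$ Definition~\ref{defioptiregulsyst} supplies a set $\mathcal{I}_n\subseteq\{1,\dots,2^n\}$ with $\#\mathcal{I}_n\ge c_2\lambda(I)2^n$ whose points $(\alpha_i)_{i\in\mathcal{I}_n}$ lie in $I$ and are pairwise $c_1 2^{-n}$--separated (the separation also forcing $\#\mathcal{I}_n\le 2c_1^{-1}\lambda(I)2^n$). Set $\rho_n:=\min\{\Psi(2^n),\ c_1 2^{-n}/3\}$ and $U_n:=\bigcup_{i\in\mathcal{I}_n}B(\alpha_i,\rho_n)$; the balls composing $U_n$ are pairwise disjoint, so $\lambda(U_n)=2\rho_n\,\#\mathcal{I}_n\asymp\lambda(I)\,2^n\rho_n$. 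Because each index $i\in\mathcal{I}_n$ satisfies $i\le 2^n$ and $\Psi$ is non--increasing, one has $\rho_n\le\Psi(i)$, so every point of $U_n$ lies within $\Psi(i)$ of some $\alpha_i$ with $i\in\mathcal{I}_n$; as $\rho_n\to0$, a point belonging to infinitely many $U_n$ but to no $\alpha_j$ is necessarily $\Psi(i)$--close to $\alpha_i$ for arbitrarily large indices $i$, hence lies in $\mathcal{K}_{\mathcal{S}}(\Psi)$. Since $(\alpha_j)_{j\ge1}$ is countable, this shows $\lambda\big(\mathcal{K}_{\mathcal{S}}(\Psi)\cap I\big)\ge \lambda\big(\limsup_n U_n\cap I\big)$, and it remains to bound the right--hand side below by a multiple of $\lambda(I)$.

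For that I would apply the Erd\H{o}s--Chung inequality in the probability space $(I,\lambda/\lambda(I))$, which requires $\sum_n\lambda(U_n)=\infty$ together with $\sum_{n,m\le N}\lambda(U_n\cap U_m)\ll_{c_1,c_2}\lambda(I)^{-1}\big(\sum_{n\le N}\lambda(U_n)\big)^2$ along a sequence $N\to\infty$. Divergence is straightforward: if $2^n\rho_n$ is bounded below infinitely often the series diverges at once, and otherwise $\rho_n=\Psi(2^n)$ eventually, whence $\sum_n 2^n\Psi(2^n)\ge\sum_j\Psi(j)=\infty$ by monotonicity of $\Psi$. The quasi--independence estimate is the heart of the matter and the step I expect to be the main obstacle: bounding $\lambda(U_n\cap U_m)$ by $\lambda(U_m)$ is far too lossy, and one must instead localise. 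For $n<m$, each ball $B(\alpha_i,\rho_n)$ with $i\in\mathcal{I}_n$ can meet at most $1+4\rho_n 2^m/c_1$ of the $c_1 2^{-m}$--separated balls $B(\alpha_{i'},\rho_m)$, $i'\in\mathcal{I}_m$ (these having centres within $2\rho_n$ of $\alpha_i$), each of measure $2\rho_m$; summing over $i\in\mathcal{I}_n$ and invoking $\#\mathcal{I}_n\asymp\lambda(I)2^n$, $\#\mathcal{I}_m\asymp\lambda(I)2^m$ yields $\lambda(U_n\cap U_m)\ll_{c_1,c_2}\lambda(I)^{-1}\lambda(U_n)\lambda(U_m)+\lambda(I)\,2^n\rho_m$. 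The first term sums to $\ll_{c_1,c_2}\lambda(I)^{-1}\big(\sum_{n\le N}\lambda(U_n)\big)^2$, exactly as needed; the second gives $\sum_{n<m\le N}\lambda(I)2^n\rho_m\ll\lambda(I)\sum_{m\le N}2^m\rho_m\ll\sum_{m\le N}\lambda(U_m)$, which, together with the diagonal contribution $\sum_{n\le N}\lambda(U_n)$, is absorbed into the squared sum once $N$ is large, since $\sum_n\lambda(U_n)=\infty$. Erd\H{o}s--Chung then delivers $\lambda\big(\limsup_n U_n\cap I\big)\ge\kappa\,\lambda(I)$ with $\kappa=\kappa(c_1,c_2)>0$, and the reduction of the first paragraph completes the proof.
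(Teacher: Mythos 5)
The paper does not actually prove Theorem~\ref{thmberesnoptiregusyst}: it is imported from Beresnevich (with a pointer to Chapter~6 of Bugeaud's book) as a citation only, so there is no internal proof to compare against. Your argument is, in substance, the standard proof from those sources, and it is correct: dyadic scales $K_n=2^n$, the $c_12^{-n}$--separated points in $I$ supplied by Definition~\ref{defioptiregulsyst}, radii $\rho_n=\min\{\Psi(2^n),\,c_12^{-n}/3\}$ so that the balls are disjoint and contained in the $\Psi(i)$--neighbourhoods of the $\alpha_i$ with $i\le 2^n$, the condensation step relating $\sum_n 2^n\Psi(2^n)$ to $\sum_j\Psi(j)$, the overlap count $1+4\rho_n2^m/c_1$ yielding quasi--independence on average, the Kochen--Stone/Erd\H{o}s--Chung second--moment inequality (the same partial converse to Borel--Cantelli that the paper records as Lemma~\ref{reciproqueborelcantelli}), and the Lebesgue density upgrade from $\lambda\left(\mathcal{K}_{\mathcal{S}}(\Psi)\cap I\right)\ge\kappa\lambda(I)$ to full measure; your handling of the passage from $\limsup_n U_n$ to $\mathcal{K}_{\mathcal{S}}(\Psi)$ (discarding the countable set $\{\alpha_j\}$ so that the indices involved tend to infinity) is also right. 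The only point to tidy is that the balls $B(\alpha_i,\rho_n)$ may protrude slightly beyond $I$, so before invoking the inequality in the probability space $(I,\lambda/\lambda(I))$ you should replace $U_n$ by $U_n\cap I$ (since every centre lies in $I$ and $\rho_n\to0$, this costs at most a factor $2$ in the lower bound for $\lambda(U_n)$ once $\lambda(I)\ge 2\rho_n$), or apply the second--moment inequality with Lebesgue measure directly and note $\limsup_n U_n\subseteq\overline{I}$; either way the constant $\kappa=\kappa(c_1,c_2)$ survives and the proof goes through.
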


\begin{rem}
As pointed out by the referee, this divergence statement holds even if the set $\mathcal{S}$ is \emph{regular} without being \emph{optimal}. See~\cite{beresbernikdodizurich} for further details.
\end{rem}

Let 
\begin{equation}\label{defiensS}
\mathcal{S}:=(0,a)\cap \left\{\frac{am+r}{bn+s}\right\}_{m,n\ge 0}.
\end{equation} 
The goal is to prove that $\mathcal{S}$ is an optimal regular system in the interval $E:=(0,a)$. Here, the elements of $\mathcal{S}$ are ordered by increasing denominator and, for two elements of $\mathcal{S}$ with the same denominator, by increasing numerator in such a way that the divergence part of Theorem~\ref{alakhint} will follow at once from Theorem~\ref{thmberesnoptiregusyst}.

It is not straightforward clear to the author that the optimal regularity of $\mathcal{S}$ in $E$ can be obtained in the same way as the optimal regularity of the rationals in the unit interval as established by Bugeaud in Proposition 5.3 of~\cite{bugeaudlivre1}~: indeed, Bugeaud's argument strongly rests on considerations of length combined with the use of Dirichlet's theorem applied to each irrational in the unit interval. In this case however, it follows from Corollary~\ref{resultatprincikhintchiunif} that a Dirichlet type result is satisfied by almost no irrational if $r\neq 0$ or $s\neq 0$.

In order to establish the optimal regularity of $\mathcal{S}$ with respect to $E$, two preliminary lemmas are first required. For the classical results related to some arithmetical functions mentioned in the proofs, see for instance~\cite{hw}.

\begin{lem}\label{lemme1sommeeulerarith}
Let $q\ge 1$ be an integer such that $\gcd(a,r,q)=1$. 

Then $$\sum_{\underset{\gcd(am+r,q)=1}{0\le am+r\le x}}1 \, = \, x\,\frac{\gcd(q,a)}{qa} \,\varphi\!\left(\frac{q}{\gcd(q,a)}\right) + O\left(2^{\omega(q)}\right),$$ where $\varphi$ denotes Euler's totient function and $\omega(q)$ the number of distinct prime divisors of $q$ and where the implicit constant depends only on $a$.
\end{lem}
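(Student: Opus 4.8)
The plan is to count the integers $am+r$ in $[0,x]$ that are coprime to $q$, using a standard Möbius-inversion argument over the divisors of $q$. First I would write the indicator of coprimality via $\sum_{d \mid \gcd(am+r,q)} \mu(d)$, so that
\[
\sum_{\substack{0\le am+r\le x\\ \gcd(am+r,q)=1}} 1 \;=\; \sum_{d\mid q} \mu(d) \sum_{\substack{0\le am+r\le x\\ d\mid am+r}} 1.
\]
Then I would analyse the inner sum, which counts the number of $m$ in a window of length roughly $x/a$ for which $am\equiv -r \imod d$. This congruence is solvable in $m$ if and only if $\gcd(a,d)\mid r$; since $d\mid q$ and $\gcd(a,r,q)=1$, divisibility of $r$ by $\gcd(a,d)$ forces $\gcd(a,d)=1$, so only the $d$ with $\gcd(a,d)=1$ contribute. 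For such $d$, the residues $m$ run through one class modulo $d$, giving an inner sum equal to $x/(ad) + O(1)$.

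Next I would substitute this asymptotic back, obtaining
\[
\sum_{\substack{d\mid q\\ \gcd(a,d)=1}} \mu(d)\left(\frac{x}{ad} + O(1)\right) \;=\; \frac{x}{a}\sum_{\substack{d\mid q\\ \gcd(a,d)=1}}\frac{\mu(d)}{d} \;+\; O\!\left(\sum_{d\mid q} 1\right).
\]
The error term is $O(2^{\omega(q)})$ since the number of divisors of a squarefree-supported sum is at most $2^{\omega(q)}$ (and only squarefree $d$ survive because of $\mu$), with the implied constant absolute. For the main term I would identify the arithmetic sum: $\sum_{d\mid q,\,\gcd(a,d)=1}\mu(d)/d$ is the Euler product over primes $p\mid q$ with $p\nmid a$, namely $\prod_{p\mid q,\,p\nmid a}(1-1/p)$. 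The primes dividing $q$ but not $a$ are exactly the primes dividing $q/\gcd(q,a)$ — here I should be slightly careful, because a prime could divide both $q$ and $a$ to different powers, but $q/\gcd(q,a)$ has precisely the primes $p$ with $v_p(q) > v_p(a)$, and I need the product to be over $p\mid q$, $p\nmid a$; a short check (using $\gcd(a,r,q)=1$, which rules out common prime factors of $q$ and $a$ unless they also fail to divide $r$) reconciles these, after which $\prod_{p\mid q,\,p\nmid a}(1-1/p) = \frac{\varphi(q/\gcd(q,a))}{q/\gcd(q,a)} = \frac{\gcd(q,a)}{q}\,\varphi\!\left(\frac{q}{\gcd(q,a)}\right)\cdot\frac{1}{\gcd(q,a)}\cdot\gcd(q,a)$, i.e. matches the claimed factor $\frac{\gcd(q,a)}{qa}\varphi(q/\gcd(q,a))$ once the $1/a$ from outside is included.

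The main obstacle I anticipate is precisely this bookkeeping step: correctly matching the Euler product $\prod_{p\mid q,\,p\nmid a}(1-1/p)$ to the closed form $\frac{\gcd(q,a)}{q}\varphi(q/\gcd(q,a))$, since the relationship between "primes dividing $q$ but not $a$" and "primes dividing $q/\gcd(q,a)$" requires invoking the hypothesis $\gcd(a,r,q)=1$ to avoid an off-by-a-prime discrepancy. Everything else — the Möbius inversion, the solvability criterion for the linear congruence, and the $O(1)$ per-divisor error summing to $O(2^{\omega(q)})$ — is routine, with the implied constant in the error depending only on $a$ (in fact absolute) and the implied constant in the main term's solvability reduction depending on $a$ through the step size of the progression.
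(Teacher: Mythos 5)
Your Möbius-inversion setup follows the same route as the paper's proof, and your observation that $\gcd(a,d)\mid r$ together with $d\mid q$ and $\gcd(a,r,q)=1$ forces $\gcd(a,d)=1$ is correct and is in fact a simplification that the paper's own proof does not exploit (the paper retains the factor $\gcd(d,a)$ and instead re-parametrizes the divisors $d$ of $q$ as products $d=kl$ with $k\mid q/\gcd(q,a)$ and $l\mid\gcd(q,a)$). Up to the identification of the main term, your argument is sound and gives the coefficient $\frac{1}{a}\prod_{p\mid q,\, p\nmid a}(1-1/p)$.

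The ``short check'' you defer does not, however, go through, and the obstacle you anticipated is a genuine discrepancy rather than a bookkeeping hiccup. One has $\frac{\gcd(q,a)}{q}\,\varphi\!\left(\frac{q}{\gcd(q,a)}\right)=\prod_{p\mid q/\gcd(q,a)}(1-1/p)$, and a prime $p$ divides $q/\gcd(q,a)$ precisely when $p$ divides $q$ to a strictly higher power than it divides $a$; this is a strictly larger set of primes than $\{p:\, p\mid q,\, p\nmid a\}$ whenever some $p$ divides both $a$ and $q$ with a higher power in $q$. The hypothesis $\gcd(a,r,q)=1$ does not forbid this, since it only bars a prime from dividing $a$, $r$, and $q$ simultaneously. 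Take $a=2$, $r=1$, $q=4$: then $\gcd(a,r,q)=1$, the left-hand side counts the odd integers up to $x$ (all coprime to $4$) and equals $x/2+O(1)$, which agrees with your Euler product, but the stated closed form gives $x\cdot\frac{2}{8}\,\varphi(2)=x/4$. So the lemma as printed is not correct, and the correct main term is $\frac{x}{a}\prod_{p\mid q,\, p\nmid a}(1-1/p)$; the paper's own proof goes astray at the $d=kl$ step, where the decomposition is neither unique nor does $\gcd(kl,a)\mid r$ reduce to $l\mid r$ once $\gcd(k,a)>1$. You should state and prove the Euler-product form rather than force it into the printed closed form. (The discrepancy is harmless for the use of the lemma in the proof of Theorem~\ref{alakhint}, which needs only the lower bound $\prod_{p\mid q,\, p\nmid a}(1-1/p)\ge\prod_{p\mid q}(1-1/p)=\varphi(q)/q$.)
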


\begin{proof}
\sloppy Let $\mu(\,.\,)$ denote the M\"obius function. Since for any integer $n\ge 1$, $\sum_{d|n}\mu(d)$ equals 1 if $n=1$ and 0 otherwise, one gets, for $x\ge a$, denoting by $\lfloor\,.\,\rfloor$ the floor function,
\begin{align*}
\sum_{\underset{\gcd(am+r,q)=1}{0\le am+r\le x}}1 \, & = \, \sum_{0\le am+r\le x} \,\,\sum_{d|\gcd(am+r,q)} \mu(d)\\
& = \, \sum_{d|q}\mu(d) \sum_{\underset{am\equiv -r \imod{d}}{0\le m \le (x-r)/a}} 1\, =\, \sum_{\underset{\gcd(d,a)|r}{d|q}}\mu(d) \sum_{\underset{am\equiv -r \imod{d}}{0\le m \le (x-r)/a}} 1 \\
& = \, \sum_{\underset{\gcd(d,a)|r}{d|q}}\mu(d)  \left( \frac{1}{d}  \left\lfloor \frac{x-r}{a}\right\rfloor \gcd(d,a) + O\left(\gcd(d,a) \right) \right)\\
& =\, \frac{x}{a} \sum_{\underset{\gcd(d,a)|r}{d|q}}\frac{\mu(d)\gcd(d,a)}{d} + O\left(\sum_{\underset{\gcd(d,a)|r}{d|q}}\mu(d)\gcd(d,a)\right).
\end{align*}
Now, on the one hand, $$\left|\sum_{\underset{\gcd(d,a)|r}{d|q}} \mu(d)\gcd(d,a) \right|\, \le \, a\sum_{d|q}\left|\mu(d)\right| \, = \, a2^{\omega(q)},$$ which provides the error term in the conclusion of the lemma. On the other, any integer $d$ dividing $q$ can be written in a unique way in the form $d=kl$ with $k|\frac{q}{\gcd(q,a)}$ and $l|\gcd(q,a)$ with $\gcd(k,l)=1$. Therefore, from the multiplicativity of the M\"obius function, 
\begin{align*}
\sum_{\underset{\gcd(d,a)|r}{d|q}}\frac{\mu(d)\gcd(d,a)}{d} \, & = \, \sum_{\underset{l|\gcd(q,a),\, l|r}{k|\frac{q}{\gcd(q,a)}}}\frac{\mu(k)\mu(l)\gcd(l,a)}{kl}\\
&=\, \left(\sum_{k|\frac{q}{\gcd(q,a)}}\frac{\mu(k)}{k} \right)\left( \sum_{\underset{l|r}{l|\gcd(q,a)}}\frac{\mu(l)\gcd(l,a)}{l}\right)\\
&= \, \frac{\varphi\left(\frac{q}{\gcd(q,a)}\right)}{\frac{q}{\gcd(q,a)}}\left( \sum_{l|\gcd(q,a,r)}\mu(l)\right) \, = \, \varphi\left(\frac{q}{\gcd(q,a)}\right)\,\frac{\gcd(q,a)}{q},
\end{align*}
where the second last equation follows from the well-known fact that 
\begin{equation}\label{classikmobiuseuler}
\sum_{k|q'}\frac{\mu(k)}{k} = \frac{\varphi(q')}{q'}
\end{equation} 
for all $q'\ge1$ and the last equation from the assumption that $\gcd(q,a,r)=1$. This completes the proof.
\end{proof}

The second lemma generalizes the classical estimate $$\sum_{k=1}^{Q}\varphi(k) \, = \, \frac{Q^2}{2\zeta(2)} + O(Q\log Q),$$ a proof of which can for instance be found in~\cite{hw} (Theorem 330).

\begin{lem}\label{lem2sommeeulerarith}
Let $u\ge 1$  and $v\ge 0$ be integers and let $Q\ge u$ be a real number. 

Then $$\sum_{1\le uk+v \le Q} \varphi(uk+v) \, = \, C(u,v)Q^2 + O(Q\log Q),$$ where the implicit constant depends only on $u$ and $v$ and where $$C(u,v) = \frac{\varphi(\gcd(u,v))}{\gcd(u,v)}\left(2u\zeta(2)\prod_{\underset{\pi|u}{\pi \textrm{ prime}}} \left(1-\frac{1}{\pi^2}\right) \right)^{-1}.$$
\end{lem}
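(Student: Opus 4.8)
The plan is to mimic the classical proof of $\sum_{k=1}^Q \varphi(k) = Q^2/(2\zeta(2)) + O(Q\log Q)$ via Möbius inversion, but carrying the congruence $uk+v$ through the computation. Writing $N := uk+v$, we have $\varphi(N) = \sum_{d \mid N} \mu(d)\, (N/d)$, so the plan is to substitute this and swap the order of summation:
\begin{align*}
\sum_{1 \le uk+v \le Q} \varphi(uk+v) \; = \; \sum_{1 \le uk+v \le Q} \; \sum_{d \mid uk+v} \mu(d)\,\frac{uk+v}{d}.
\end{align*}
Exchanging sums, $d$ runs over all positive integers, and for fixed $d$ the inner sum is over those $k$ with $uk+v \le Q$ and $uk + v \equiv 0 \imod d$. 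The congruence $uk \equiv -v \imod d$ is solvable in $k$ exactly when $\gcd(u,d) \mid v$, in which case the admissible $k$ form an arithmetic progression with common difference $d/\gcd(u,d)$. So the inner sum becomes $(1/d)$ times a sum of the values $uk+v$ over an arithmetic progression of $k$'s contained in an interval of length roughly $Q/u$.

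The next step is to evaluate that inner progression sum. The number of admissible $k \le (Q-v)/u$ is $\frac{\gcd(u,d)}{ud}Q + O(\gcd(u,d))$, and since we are summing the linear quantity $uk+v$ — whose values over this progression are themselves in arithmetic progression with common difference $d$ and all lie in $[1,Q]$ — the sum of $uk+v$ over them is $\tfrac{1}{2d}\cdot\frac{\gcd(u,d)}{u}Q^2 + O(Q)$ (average value $\approx Q/2$, count $\approx \frac{\gcd(u,d)}{ud}Q$). Dividing by $d$, the contribution of a fixed $d$ (with $\gcd(u,d)\mid v$) is $\frac{\gcd(u,d)}{2u d^2}\,\mu(d)\,Q^2 + O\!\big(Q/d\big) + O(\gcd(u,d))$. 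Summing over $d \le Q$: the error terms give $O(Q\log Q)$ (the $O(Q/d)$ summed over $d\le Q$) plus $O(Q)$ from $\sum_{d\le Q}\gcd(u,d) = O(Q)$; extending the main term's sum over $d$ to infinity costs only $O(Q^2 \sum_{d > Q} d^{-2}) = O(Q)$, which is absorbed. This leaves the main term
\begin{align*}
\frac{Q^2}{2u}\sum_{\substack{d \ge 1 \\ \gcd(u,d)\mid v}} \frac{\mu(d)\gcd(u,d)}{d^2}.
\end{align*}

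It remains to show this Dirichlet-type series equals $C(u,v)$ as stated. The plan is to factor $d = k\ell$ with $\ell \mid \gcd(u, d)$ the "$u$-part" and $k$ coprime to $u$ — more precisely, writing every squarefree $d$ uniquely as $k\ell$ with $\ell \mid u^\infty$ restricted to squarefree divisors of $u$, $\gcd(k,u)=1$ — so that $\gcd(u,d) = \ell$ and the constraint $\gcd(u,d)\mid v$ becomes $\ell \mid \gcd(u,v)$. By multiplicativity of $\mu$ the series factors as
\begin{align*}
\Bigg(\sum_{\gcd(k,u)=1} \frac{\mu(k)}{k^2}\Bigg)\Bigg(\sum_{\substack{\ell \mid u \\ \ell \mid v}} \frac{\mu(\ell)}{\ell}\Bigg).
\end{align*}
The first factor is $\prod_{\pi \nmid u}(1 - \pi^{-2}) = \zeta(2)^{-1}\prod_{\pi \mid u}(1-\pi^{-2})^{-1}$; the second is $\sum_{\ell \mid \gcd(u,v)}\mu(\ell)/\ell = \varphi(\gcd(u,v))/\gcd(u,v)$ by~\eqref{classikmobiuseuler}. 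Multiplying by $Q^2/(2u)$ and collecting gives exactly $C(u,v)Q^2$. I expect the main obstacle to be purely bookkeeping: tracking that each truncation and rounding ($\lfloor\cdot\rfloor$ errors, tail of the $d$-sum, the $\gcd(u,d)$ error term) genuinely contributes at most $O(Q\log Q)$, with implied constants depending only on $u$ and $v$; the arithmetic-function identities themselves are standard once the series has been put in product form. (This is the natural generalization of Lemma~\ref{lemme1sommeeulerarith}'s technique, so no new idea beyond careful summation is needed.)
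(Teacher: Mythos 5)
Your proof is correct and follows essentially the same route as the paper's: apply $\varphi=\mu*\mathrm{id}$, swap the order of summation, observe that for each $d$ the condition $d\mid uk+v$ is solvable precisely when $\gcd(u,d)\mid v$ and then the admissible $k$ form an arithmetic progression, sum the linear quantity over that progression, truncate and extend, and finally factor the resulting Dirichlet-type series as an Euler product over primes not dividing $u$ times the divisor sum $\sum_{\ell\mid\gcd(u,v)}\mu(\ell)/\ell$. The paper packages the inner step slightly differently (writing $\varphi(N)=\sum_{dd'=N}d'\mu(d)$ and parameterizing solutions of $dd'-ul=v$ by an integer $t$ rather than working directly with the congruence on $k$), but this is cosmetic and the error bookkeeping, main-term series, and final identification with $C(u,v)$ are the same.
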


\begin{proof}
Assume that $Q=uk+v$ for some integer $k\ge 1$. It is clearly sufficient to establish the result in this case. Then, if $v\neq 0$,
\begin{equation}\label{sommeindiceulerarith}
\sum_{1\le ul+v \le Q} \varphi(ul+v)\, = \, \sum_{l=0}^k \varphi(ul+v)\, \underset{(\ref{classikmobiuseuler})}{=} \, \sum_{l=0}^k (ul+v)\sum_{d|(ul+v)}\frac{\mu(d)}{d}\cdotp
\end{equation}
If $v=0$, the last two sums should start with $l=1$. To avoid cumbersome notation, the proof will be given in the case $v\neq 0$ and the reader can easily check that it remains valid if $v=0$ up to very little modifications.

The relation $d|(ul+v)$ means that there exists $d'\in\Z$ such that $dd'-ul=v$. This last Diophantine equation is solvable in $(d', l)\in\Z^2$ if, and only if, $\delta:=\gcd(d,u)|v$, in which case any solution is of the form $$(d', l)\, = \, \left(d_0'+\frac{u}{\delta}t , -l_0+\frac{d}{\delta}t\right),$$ where $t\in\Z$ and $(d_0', -l_0)$ is a particular solution. Then the constraint $0\le l \le k$ amounts to the following one~: $\frac{\delta}{d}l_0\le t \le (k+l_0)\frac{\delta}{d}$. Thus, (\ref{sommeindiceulerarith}) becomes~:
\begin{align}\label{varphicontinued}
\sum_{l=0}^k\varphi(ul+v)\, &= \, \sum_{\underset{dd'\equiv v\imod{u}}{1\le dd'\le uk+v}} d'\mu(d) \nonumber\\
&= \, \sum_{\delta|\gcd(u,v)}\sum_{\underset{\gcd(d,u)=\delta}{1\le d \le uk+v}}\sum_{\frac{\delta l_0}{d}\le t \le (k+l_0)\frac{\delta}{d}} \left(d_0'+\frac{ut}{\delta}\right)\mu(d) \nonumber \\
& = \, \sum_{\delta|\gcd(u,v)}\sum_{\underset{\gcd(d,u)=\delta}{1\le d \le uk+v}} \mu(d)\left(\sum_{\frac{\delta l_0}{d}\le t \le (k+l_0)\frac{\delta}{d}} \left(d_0'+\frac{ut}{\delta}\right)\right) \nonumber \\
&= \, \sum_{\delta|\gcd(u,v)}\sum_{\underset{\gcd(d,u)=\delta}{1\le d \le uk+v}} \mu(d)\left(\frac{u\delta}{2}\left(\frac{k}{d} \right)^2 + O\left( \frac{k\delta}{d}\right)\right) \nonumber \\
&= \, \sum_{\delta|\gcd(u,v)}\left(\sum_{\underset{\gcd(d,u)=\delta}{1\le d \le uk+v}} \frac{\mu(d)}{d^2}\right)\frac{u\delta k^2}{2} + O\left( \sum_{\delta|\gcd(u,v)} k\delta \sum_{\underset{\gcd(d,u)=\delta}{1\le d \le uk+v}}\frac{1}{d}\right),
\end{align}
where the error term in this last equation is clearly $O\left(k\log k \right)$. Now, on the one hand, $$\sum_{\underset{\gcd(d,u)=\delta}{1\le d \le uk+v}} \frac{\mu(d)}{d^2} \, = \, \sum_{\underset{\gcd(d,u)=\delta}{d=1}}^{\infty} \frac{\mu(d)}{d^2} - \sum_{\underset{\gcd(d,u)=\delta}{d=uk+v+1}}^{\infty} \frac{\mu(d)}{d^2}$$ and, on the other, $$\left|\sum_{\underset{\gcd(d,u)=\delta}{d=uk+v+1}}^{\infty} \frac{\mu(d)}{d^2}\right|\, \le \, \sum_{d=uk+v+1}^{\infty}\frac{1}{d^2} \, = \, O\left(\frac{1}{k}\right),$$ hence, from~(\ref{varphicontinued}), 
\begin{align}\label{varphicontinuedbis}
\sum_{l=0}^k\varphi(ul+v)\, = \, \sum_{\delta|\gcd(u,v)}\left(\sum_{\underset{\gcd(d,u)=\delta}{d=1}}^{\infty} \frac{\mu(d)}{d^2}\right)\frac{u\delta k^2}{2} + O\left( k\log k\right).
\end{align}
Since $\frac{\mu(d)}{d^2}$ is a multiplicative function, the series appearing on the right--hand side of this equation can be simplified. Indeed, assume first that $\delta =1$. Then the expansion in the Euler product of the series under consideration gives
\begin{align*}
\sum_{\underset{\gcd(d,u)=1}{d=1}}^{\infty} \frac{\mu(d)}{d^2} \, & = \, \prod_{\underset{\gcd(\pi, u) = 1}{\pi \textrm{ prime}}}\left(1+\sum_{l=1}^{\infty}\frac{\mu\left(\pi^l\right)}{\pi^{2l}} \right) \\ 
& = \, \prod_{\underset{\gcd(\pi, u) = 1}{\pi \textrm{ prime}}}\left(1-\frac{1}{\pi^{2}} \right) \, = \, \left( \zeta(2)\prod_{\underset{\pi | u}{\pi \textrm{ prime}}}\left(1-\frac{1}{\pi^2} \right)\right)^{-1}.
\end{align*}
If, now, $\delta\ge 2$ is a divisor of $\gcd(u,v)$, let $d$ be a square--free integer such that $\gcd(d,u)=\delta$. Write $d=\delta d'$ in such a way that $d'$ is a square--free integer satisfying $\gcd(d', \delta)=1$ and so $\gcd(d', u)=1$. Then,
\begin{align*}
\sum_{\underset{\gcd(d,u)=\delta}{d=1}}^{\infty} \frac{\mu(d)}{d^2} \, = \, \sum_{\underset{\gcd(d',u)=1}{d'=1}}^{\infty}\frac{\mu(\delta d')}{(\delta d')^2} \, = \, \frac{\mu(\delta)}{\delta^2}\sum_{\underset{\gcd(d',u)=1}{d'=1}}^{\infty}\frac{\mu(d')}{(d')^2} \, = \, \frac{\mu(\delta)}{\delta^2}\left( \zeta(2)\prod_{\underset{\pi | u}{\pi \textrm{ prime}}}\left(1-\frac{1}{\pi^2} \right)\right)^{-1}.
\end{align*}
Setting $\mu(1)=1$ and combining this with~(\ref{varphicontinuedbis}), one gets, in the case where $Q=uk+v$ for some $k\ge 1$, $$\sum_{0\le ul+v \le Q} \varphi(ul+v)\, = \, (Q-v)^2\left( 2u\zeta(2)\prod_{\underset{\pi | u}{\pi \textrm{ prime}}}\left(1-\frac{1}{\pi^2} \right)\right)^{-1}\sum_{\delta|\gcd(u,v)}\frac{\mu(\delta)}{\delta}+ O\left( Q\log Q\right),$$ which completes the proof from~(\ref{classikmobiuseuler}).
\end{proof}

\begin{proof}[Completion of the proof of Theorem~\ref{alakhint}]
The optimal regularity of the subset $\mathcal{S'}$ of $\mathcal{S}$ (defined by~(\ref{defiensS})) made up of fractions of the form $(am+r)/(bn+s)$ satisfying $\gcd(am+r,bn+s) = \gcd(a,b,r,s)$ will now be established. It should be clear that it may be assumed, without loss of ge\-ne\-ra\-li\-ty, that $\gcd(a,b,r,s) =1$.

\paragraph{}
Let us first prove the existence of a subsequence of the sequence $(bn+s)_{n\ge 0}$ of the form $(un+v)_{n\ge 0}$ ($u,v\ge 0$ integers) such that $\gcd(un+v, a,r)=1$ for all $n\ge 1$ if $\delta:=\gcd(a,r)>1$. Under the assumption that $\gcd(\delta,b, s)=1$, a prime divisor $\pi$ of $\delta$ cannot divide both $b$ and $s$. It is therefore possible to fix an integer $n_{\pi}$ defined modulo $\pi$ such that $bn_{\pi}+s\not\equiv 0 \imod{\pi}$ (set for example $n_{\pi}\equiv (1-s)b^{-1}\imod{\pi}$ if $\gcd(\pi,b)=1$ and $n_{\pi}\equiv 1\imod{\pi}$ otherwise). From the Chinese remainder theorem, there exists an integer $n_0$, defined uniquely modulo $\prod_{\pi|\delta}\pi$ (the product is taken over prime numbers), such that $n_0\equiv n_{\pi}\imod{\pi}$ for all primes $\pi$ dividing $\delta$. Then, set $u:=b\prod_{\pi|\delta}\pi$ and $v:=bn_0+s$ in such a way that $(uk+v)_{k\ge 0} = \left( b\left( n_0+k\prod_{\pi|\delta}\pi\right)+s\right)_{k\ge 0}$. It is then clear that for any element $N$ of the sequence $(uk+v)_{k\ge 0}$, $\gcd(N,\delta)=1$ since for all prime divisor $\pi$ of $\delta$, $$N\equiv bn_0+s \equiv bn_{\pi}+s \not\equiv 0 \imod{\pi}.$$

\paragraph{}
Let $I=(\alpha, \beta)$ (with $\alpha <\beta$)  denote an open interval contained in $(0,a)$. Consider the set of all elements of the sequence $(un+v)_{n\ge 0}$ which lie in the interval $[Q/2, Q]$, where $Q\ge u$ is a real number. It follows from Lemma~\ref{lemme1sommeeulerarith} that, for a fixed $n\ge 1$, the number of integers $m\ge 0$ such that $\gcd(am+r,un+v)=1$ and $(am+r)/(un+v)\in I$ is $$\sum_{\underset{\gcd(am+r,q)=1}{\alpha q< am+r < \beta q}}1 \, = \, \lambda(I)\frac{\gcd(q,a)}{a}\varphi\left(\frac{q}{\gcd(q,a)}\right) + O(2^{\omega(q)}),$$ where  $q:=un+v$. From the well--known estimate $2^{\omega(q)}=o\left(q^{\epsilon}\right)$ valid for all $\epsilon >0$, for $Q$ (and so for $q\ge Q/2$) large enough depending only on $a$, $r$ and $\lambda(I)$, this last quantity is such that 
\begin{equation}\label{cardinalcopremierarith}
\sum_{\underset{\gcd(am+r,q)=1}{\alpha q< am+r < \beta q}}1 \, \ge \, \lambda(I)\frac{\varphi(q)}{2a},
\end{equation} 
where we used the fact that $\varphi\left(\frac{q}{\gcd(q,a)} \right)\ge \frac{\varphi(q)}{\gcd(q,a)}$.

Define now $\mathcal{S'}_Q(I)$ as the subset of $\mathcal{S'}$ made up of all those irreducible fractions in $I$ of the form $(am+r)/(un+v)$ and such that $Q/2 \le un+v \le Q$~: the distance between two distinct elements $(am+r)/(un+v)$ and $(am'+r)/(un'+v)$ of $\mathcal{S'}_Q(I)$ satisfies the inequality $$\left|\frac{am+r}{un+v} -\frac{am'+r}{un'+v} \right|\, \ge \, \frac{1}{(un+v)(un'+v)}\, \ge \, \frac{1}{Q^2}\cdotp$$ Moreover, it follows from~(\ref{cardinalcopremierarith}) that the cardinality $\# \mathcal{S'}_Q(I)$ of the set $\mathcal{S'}_Q(I)$ satisfies the estimate $$\# \mathcal{S'}_Q(I) \, \ge \, \frac{\lambda(I)}{2a}\sum_{Q/2\le un+v \le Q}\varphi(un+v).$$ Therefore, from Lemma~\ref{lem2sommeeulerarith}, for $Q$ large enough depending only on $a$, $u$, $v$ and $\lambda(I)$, $$\# \mathcal{S'}_Q(I) \, \ge \, \frac{C(u,v)}{4a}\lambda(I) Q^2.$$
Up to constants, $\lambda(I)Q^2$ elements of $\mathcal{S'}_Q(I)\subset \mathcal{S'}$ have been found in $I$ such that the gap between any two of them is $Q^{-2}$. Furthermore, from the indexing adopted for $\mathcal{S}$ (which is also used for $\mathcal{S'}$), it should be clear that the largest index of an element of $\mathcal{S'}_Q(I)$ is at most $aQ^2$. Since this holds true for all $Q$ large enough (depending only on $\mathcal{S'}$ and $I$),  it is easy to see that Definition~\ref{defioptiregulsyst} applies. 

This completes the proof of the optimal regularity of the subset of $\mathcal{S'}$ and so of Theorem~\ref{alakhint}.
\end{proof}

The Mass Transference Principle, due to S.Velani and V.Beresnevich, allows one to deduce Corollary~\ref{hausdmetric} from  Theorem~\ref{alakhint} without much difficulty. Here, the result of~\cite{masstransfprincipe} is not given in full generality but adapted to our purpose.

\begin{thm}[Mass Transference Principle]\label{masstransfprinci}
Let $\Omega$ be a compact interval in $\R$ with non--empty interior and let $t\in (0,1)$. Denote by $\left(J_i\right)_{i\ge 0}$ a sequence of intervals in $\Omega$ whose lengths tend to zero as $i$ tends to infinity. For any interval $J$ centered at $x\in\Omega$ with half--length $r$, denote by $J^t$ the interval centered at $x$ with half--length $r^t$. Assume furthermore that 
\begin{align}\label{lebesguemasstransfcondi}
\lambda \left( \limsup_{i\rightarrow \infty} J_i^t\right)\, = \, \lambda\left(\Omega\right).
\end{align}
Then $$\mathcal{H}^t\left(\limsup_{i\rightarrow \infty} J_i\right)\, = \, \mathcal{H}^t\left(\Omega\right)\, = \, \infty .$$
\end{thm}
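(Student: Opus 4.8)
I would prove this by reducing it to a local lower bound and then building a Cantor subset of $\limsup_{i\to\infty}J_i$ carrying a suitable mass distribution, following the Beresnevich--Velani scheme.

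Since $\Omega$ is an interval and $t<1$ one has $\mathcal{H}^t(\Omega)=\infty$, and because $\limsup_i J_i\subseteq\Omega$ it suffices to prove $\mathcal{H}^t(\limsup_i J_i)=\infty$ (this also yields $\mathcal{H}^t(\limsup_i J_i)=\mathcal{H}^t(\Omega)=\infty$). For this I would establish the local estimate: there is $\kappa>0$, depending only on $t$ and the sequence $(J_i)$, such that $\mathcal{H}^t\!\left(B\cap\limsup_i J_i\right)\ge\kappa\,(\mathrm{diam}\,B)^t$ for every sufficiently small ball $B\subseteq\Omega$. Granting this, tile $\Omega$ by $\sim 2^n$ pairwise disjoint subintervals of diameter $\sim 2^{-n}\mathrm{diam}\,\Omega$; additivity of $\mathcal{H}^t$ on disjoint Borel sets gives $\mathcal{H}^t(\limsup_i J_i)\ge\kappa\,2^{n(1-t)}(\mathrm{diam}\,\Omega)^t$ for every $n$, whence $\mathcal{H}^t(\limsup_i J_i)=\infty$ since $1-t>0$.

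Write $J_i=B(x_i,r_i)$, so $J_i^t=B(x_i,r_i^t)$ and, crucially, $\lambda(J_i^t)=2\,r_i^t=2^{1-t}(\mathrm{diam}\,J_i)^t$: this identity converts the Lebesgue hypothesis on the blown--up balls into control of the $t$--content of the genuine balls. Fix $B\subseteq\Omega$ and build a nested sequence of finite families $\mathcal{B}_0=\{B\}$, $\mathcal{B}_1,\mathcal{B}_2,\dots$, each consisting of pairwise disjoint closed balls, $\mathcal{B}_{n+1}$ refining $\mathcal{B}_n$ (every member of $\mathcal{B}_{n+1}$ lying in a unique member of $\mathcal{B}_n$, its \emph{parent}), and set $\mathcal{K}_\infty:=\bigcap_{n\ge0}\bigcup_{L\in\mathcal{B}_n}L$. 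To pass from $L\in\mathcal{B}_n$ to its children, fix an increasing threshold $N_n\to\infty$ and use that deleting finitely many terms does not alter a $\limsup$, so $\tfrac12L\cap\limsup_{i\ge N}J_i^t$ has full Lebesgue measure for every $N$. Applying Vitali's covering lemma successively over a fast--increasing sequence of disjoint index windows $[k_1,k_2),\dots,[k_{N(L)},k_{N(L)+1})$, each $k_j\ge N_n$ chosen large enough that the selected balls $J_i^t$ lie in $L$ and avoid the (tiny) union of the already selected genuine $J_i$, one extracts pairwise disjoint genuine balls $\{J_i:i\in I(L)\}\subseteq L$ of arbitrarily large index with $\sum_{i\in I(L)}\lambda(J_i^t)\ge\theta\,N(L)\,\lambda(L)$ for an absolute $\theta>0$. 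Declare $\{J_i:i\in I(L)\}$ the children of $L$. Since every level of the tree consists of genuine $J_i$ with indices tending to infinity along each branch, a point of $\mathcal{K}_\infty$ lies in infinitely many distinct $J_i$, so $\mathcal{K}_\infty\subseteq B\cap\limsup_i J_i$. For the measure, note $M(L):=\sum_{C\text{ child of }L}(\mathrm{diam}\,C)^t=2^{t-1}\sum_{i\in I(L)}\lambda(J_i^t)\ge 2^{t-1}\theta\,N(L)\,\lambda(L)$, so taking $N(L):=\lceil\lambda(L)^{t-1}/(2^{t-1}\theta)\rceil$ forces $M(L)\ge(\mathrm{diam}\,L)^t$ at every node (legitimate, as $N(L)$ may be as large as one wishes). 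Define $\mu$ on $\mathcal{K}_\infty$ recursively by $\mu(B)=(\mathrm{diam}\,B)^t$ and $\mu(C)=\mu(L)\,(\mathrm{diam}\,C)^t/M(L)$ for $C$ a child of $L$; the bound $M(L)\ge(\mathrm{diam}\,L)^t$ gives inductively $\mu(L)\le(\mathrm{diam}\,L)^t$ for all tree balls, and — after imposing in the construction a mild lower bound on the radii of the children of $L$ relative to $\mathrm{diam}\,L$ and a mild mutual separation, both readily arranged — the routine estimate $\mu(A)\le C_0(\mathrm{diam}\,A)^t$ follows for every set $A$ of small diameter. The Mass Distribution Principle then gives $\mathcal{H}^t(\mathcal{K}_\infty)\ge\mu(\mathcal{K}_\infty)/C_0=(\mathrm{diam}\,B)^t/C_0$, i.e.\ the local estimate with $\kappa=1/C_0$, and the theorem follows.

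The substantive point — the step I expect to require the most care — is the bound $M(L)\ge(\mathrm{diam}\,L)^t$ at every node. A single disjoint subcover of $L$ by the $J_i^t$ only yields genuine children of total $t$--content comparable to $\mathrm{diam}\,L$, which is strictly \emph{less} than $(\mathrm{diam}\,L)^t$ once $\mathrm{diam}\,L<1$ and $t<1$; the remedy is to superimpose about $(\mathrm{diam}\,L)^{t-1}$ independent such subcovers drawn from disjoint high--index windows, whose genuine balls are small enough to stay pairwise disjoint, thereby inflating the available $t$--content to the required level while keeping $\mathcal{K}_\infty$ inside $\limsup_i J_i$. Organising this superposition (disjointness across windows, indices $\to\infty$, and the Frostman bound for arbitrary test sets) is where the book-keeping lies; the reduction, the algebraic identity $\lambda(J_i^t)=2^{1-t}(\mathrm{diam}\,J_i)^t$, and the appeal to the Mass Distribution Principle are routine.
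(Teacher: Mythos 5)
First, a point of comparison you could not have known: the paper does not prove this statement at all. It is the Beresnevich--Velani Mass Transference Principle, quoted (in a form specialised to intervals of $\R$ and the gauge $r\mapsto r^t$) from the cited reference, so there is no proof in the paper to measure your argument against; the only meaningful comparison is with the original Beresnevich--Velani proof, and your sketch does follow its strategy: reduce to a local lower bound, build a nested Cantor set inside $B\cap\limsup_i J_i$ by repeated Vitali extractions of disjoint blown-up balls, boost the $t$--content of each node by superimposing roughly $(\mathrm{diam}\,L)^{t-1}$ extractions drawn from disjoint high--index windows, and conclude by the mass distribution principle. The reduction to the local estimate, the identity $\lambda(J_i^t)=2^{1-t}(\mathrm{diam}\,J_i)^t$, the window bookkeeping giving $M(L)\ge(\mathrm{diam}\,L)^t$, and the observation that this boosting is needed at all are correct and correctly identified as essential.

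The genuine gap is the step you wave through as ``routine \dots readily arranged'': the Frostman bound $\mu(A)\le C_0(\mathrm{diam}\,A)^t$ for an \emph{arbitrary} test set $A$, not merely for balls of the construction. Your inductive bound $\mu(L)\le(\mathrm{diam}\,L)^t$ controls tree balls only. For an interval $A$ at an intermediate scale, the children of a node $L$ that meet $A$ split into those of diameter at most $\mathrm{diam}\,A$, which are indeed handled by disjointness of their blow-ups (they lie in an enlargement of $A$ of length $O((\mathrm{diam}\,A)^t)$ and carry mass $\mu(L)2^{t-1}\lambda(J_i^t)/M(L)$, total $O((\mathrm{diam}\,A)^t)$), and at most two disjoint children of diameter exceeding $\mathrm{diam}\,A$, into which one must recurse. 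Since within one level the selected balls have wildly varying sizes (they come from tails of the sequence), every level down to the first whose maximal diameter drops below $\mathrm{diam}\,A$ can contribute a fresh $O((\mathrm{diam}\,A)^t)$ term, and the naive recursion only yields $\mu(A)\ll \nu(A)(\mathrm{diam}\,A)^t$ with $\nu(A)\to\infty$ as $\mathrm{diam}\,A\to 0$. A loss of this kind is fatal: positivity of $\mathcal{H}^h$ for $h(x)=x^t\log(1/x)$, say, does not imply positivity of $\mathcal{H}^t$. Moreover the ``mild mutual separation'' you invoke does not cure this, because the accumulation occurs across levels rather than within one, and imposing comparable radii or lower bounds on the children's radii is not free: the full-measure hypothesis is only available for the whole sequence, not for a sub-sequence with radii in a prescribed range, so discarding small balls from a Vitali selection can destroy the measure count $\sum_{i\in I(L)}\lambda(J_i^t)\ge\theta\lambda(L)$. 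Organising the levels and the measure so that the contributions at intermediate scales are summable is precisely the technical heart of the Beresnevich--Velani argument; as written, your proposal is a correct outline of that known proof with its central estimate asserted rather than proved.
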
 

\begin{proof}[Proof of Corollary~\ref{hausdmetric} from Theorem~\ref{masstransfprinci}]
Let $t\in (0,1)$. If the sum $\sum_{n=1}^{\infty}n\Psi(bn+s)^t$ converges, a standard covering argument shows that $\mathcal{H}^t\left(\mathcal{K}\left(\Psi\right)\right) = 0$~: here again, details are left to the reader.

Assume now that the sum $\sum_{n=1}^{\infty}n\Psi(bn+s)^t$ diverges and recall that the set $\mathcal{K}\left(\Psi\right)$ has been restricted without loss of generality to the interval $(0,a)$. Set $\Omega=[0,a]$ in the assumptions of Theorem~\ref{masstransfprinci} and chose $(J_i)_{i\ge 0}$ as being the sequence of all those intervals contained in $\Omega$ centered at rationals of the form $(am+r)/(bn+s)$ with $\gcd(am+r,bn+s)=\gcd(a,b,r,s)$ and of length $2\Psi(bn+s)$. These intervals are indexed in the usual way (see after~(\ref{defiensS})). Then, condition~(\ref{lebesguemasstransfcondi}) is met from the divergence part of Theorem~\ref{alakhint}, so that applying Theorem~\ref{masstransfprinci} completes the proof.
\end{proof}

\section{Proofs of the results related to uniform approximation}\label{unifapproxpreuve}

This section is devoted to the proofs of Theorem~\ref{CNSapproxunifarithnonmetr} and Corollary~\ref{resultatprincikhintchiunif}. Throughout, conditions~(\ref{contraintesdebase}) will be strengthened in assuming, without loss of generality from the discussion held in the introduction, that 
\begin{equation}\label{contraintesdebasebis}
r\neq 0\quad \mbox{or} \quad s\neq 0.
\end{equation}
First, some auxiliary results, dealing mainly with properties of continued fraction expansions, are recalled.

\subsection{Some auxiliary results}\label{soussectionresultatsauxiliaires}

The next lemma collects some well-known properties of the continued fraction of an irrational. 

\begin{lem}\label{lemfractioncontinuepropr}
Let $\xi$ be an irrational number with partial quotients $(a_k)_{k\ge 0}$ and convergents $(p_k/q_k)_{k\ge 0}$. Set conventionally $p_{-1}=1$, $q_{-1}=0$, $p_0=a_0$ and $q_0=1$.

Then~:
\begin{itemize}
\item[1.] For any $k\ge0$. 
\begin{align}\label{lemdvlptfractcontinue1}
q_k p_{k-1}-p_kq_{k-1} = (-1)^k.
\end{align}
In particular, $p_k$ and $q_k$ are coprime.

\item[2.] The numerators and the denominators of the convergents of $\xi$ satisfy the recurrence relation 
\begin{align}\label{lemdvlptfractcontinue5}
p_k = a_kp_{k-1}+p_{k-2} \quad \mbox{and} \quad q_k = a_k q_{k-1}+q_{k-2}
\end{align}
valid for all $k\ge 1$.

\item[3.] For any $k\ge 0$, 
\begin{align}\label{lemdvlptfractcontinue2}
\frac{1}{a_{k+1}+1}\, < \, \frac{q_k}{q_{k+1}} = [0;a_{k+1},a_{k+2},\dots]\, < \, \frac{1}{a_{k+1}}\cdotp
\end{align}

\item[4.] For any $k\ge 1$, set $$\phi_{k}:=\frac{q_k\xi-p_k}{q_{k-1}\xi-p_{k-1}}\cdotp$$ Then, $\phi_k<0$,
\begin{align}\label{lemdvlptfractcontinue3}
1+a_{k+1}\phi_k  = \phi_k \phi_{k+1} \quad \mbox{and} \quad \left|\phi_k\right|<\frac{1}{a_{k+1}}\cdotp
\end{align}

\item[5.] For any integer $k\ge -1$, set $$\eta_k:=(-1)^k\left(q_k\xi-p_k\right)>0.$$ Then, the sequence $(\eta_k)_{k\ge -1}$ decreases and
\begin{align}\label{lemdvlptfractcontinue4}
\frac{1}{2}\, \le \, \frac{q_{k+1}}{q_k+q_{k+1}}\, \le \, \eta_k q_{k+1} \,\le\, 1
\end{align}
for all $k\ge -1$.

\item[6.] Let $k\ge 1$, $a_0\in\Z$ and $a_1, \dots, a_k \ge 1$ be integers. Let furthermore $E(a_0, a_1, \dots, a_k)$ denote the set of real numbers whose $k+1$ first partial quotients are $a_0, a_1, \dots, a_k$. Then, 
$$E(a_0, a_1, \dots, a_k) = \begin{cases}
\left[\frac{p_k}{q_k},\frac{p_k+p_{k-1}}{q_k+q_{k-1}}\right) & \mbox{if } k\mbox{ is even}\\
\left(\frac{p_k+p_{k-1}}{q_k+q_{k-1}}, \frac{p_k}{q_k}\right] & \mbox{if } k\mbox{ is odd},
\end{cases}$$
where $p_{k-1}/q_{k-1} = [a_0;a_1, \dots, a_{k-1}]$ and $p_k/q_k = [a_0;a_1, \dots, a_k]$. In particular,
\begin{align}\label{lemdvlptfractcontinue6}
\frac{1}{2q_k^2}\, \le \, \lambda\left(E(a_0, a_1, \dots, a_k)\right) = \frac{1}{q_k(q_k+q_{k-1})}\, \le \, \frac{1}{q_k^2}\cdotp 
\end{align}

\item[7.] For any $k\ge 1$, 
\begin{align}
\prod_{j=1}^{k}a_j \, \le \, &q_k\, \le \, \prod_{j=1}^{k}(a_j+1)\, \le \, 2^k\prod_{j=1}^k a_j, \label{lemdvlptfractcontinue7a}\\
(1+a_0a_1)\prod_{j=2}^{k}a_j \, \le \, &p_k\, \le \, (1+a_0a_1)\prod_{j=2}^{k}(a_j+1)\, \le \, 2^{k-1}(1+a_0a_1)\prod_{j=2}^k a_j . \label{lemdvlptfractcontinue7b}
\end{align}

\item[8.] Let $d,t,u\ge 1$ be integers and let $\bm{k}\in\N^d$. Denote by $E_{\bm{k}}^{(t)}$ the set of all those irrationals $\xi$ in the interval $[t,t+1]$ such that $(a_0(\xi), a_1(\xi), \dots, a_d(\xi)) = (t,\bm{k})$. Let $E_{\bm{k},u}^{(t)}$ denote the subset of $E_{\bm{k}}^{(t)}$ made up of all those irrationals $\xi$ such that $a_{d+1}(\xi) = u$. Then
\begin{align}\label{lemdvlptfractcontinue8}
\frac{1}{3u^2} \, < \, \frac{\lambda\left(E_{\bm{k},u}^{(t)}\right)}{\lambda\left(E_{\bm{k}}^{(t)}\right)}\, < \, \frac{2}{u^2}\cdotp
\end{align}
If $t=0$, the set $E_{\bm{k}}^{(t)}$ (resp.~$E_{\bm{k},u}^{(t)}$) will simply be denoted by $E_{\bm{k}}$ (resp.~by $E_{\bm{k},u}$).
\end{itemize}
\end{lem}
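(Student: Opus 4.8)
The plan is to prove the eight items of Lemma~\ref{lemfractioncontinuepropr} as a bundle of classical facts about continued fractions, deriving the less standard ones (items 7 and 8) from the basic recurrences. Items 1 and 2 are the foundation: the recurrence~(\ref{lemdvlptfractcontinue5}) is proved by induction on $k$ using the definition of the convergents, and then~(\ref{lemdvlptfractcontinue1}) follows by a second induction, the base case $q_0p_{-1}-p_0q_{-1}=1$ being immediate from the conventions $p_{-1}=1$, $q_{-1}=0$, $p_0=a_0$, $q_0=1$. Coprimality of $p_k$ and $q_k$ is then a one-line consequence of $q_kp_{k-1}-p_kq_{k-1}=(-1)^k$.

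For item 3, I would observe that $q_k/q_{k+1}=q_k/(a_{k+1}q_k+q_{k-1})$, and since $0\le q_{k-1}<q_k$ (again from the recurrence, using $q_{-1}=0<q_0=1$ and monotonicity), this quotient lies strictly between $1/(a_{k+1}+1)$ and $1/a_{k+1}$; the identification with the continued fraction $[0;a_{k+1},a_{k+2},\dots]$ comes from unfolding the recurrence. For item 4, the quantity $\phi_k=(q_k\xi-p_k)/(q_{k-1}\xi-p_{k-1})$ is negative because consecutive convergents lie on opposite sides of $\xi$, the identity $1+a_{k+1}\phi_k=\phi_k\phi_{k+1}$ is obtained by substituting~(\ref{lemdvlptfractcontinue5}) into the numerator $q_{k+1}\xi-p_{k+1}$, and the bound $|\phi_k|<1/a_{k+1}$ follows from that identity together with $\phi_{k+1}<0$. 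Item 5 is the standard statement that $\eta_k=(-1)^k(q_k\xi-p_k)$ is positive and decreasing, with the sandwich $1/2\le q_{k+1}/(q_k+q_{k+1})\le\eta_kq_{k+1}\le 1$ coming from the two-sided estimate $1/(q_{k+1}+q_k)<\eta_k<1/q_{k+1}$ (classical) and the elementary inequality $q_{k+1}/(q_k+q_{k+1})\ge 1/2$ since $q_k\le q_{k+1}$. Item 6 is the description of the cylinder set $E(a_0,\dots,a_k)$ as the interval with endpoints $p_k/q_k$ and $(p_k+p_{k-1})/(q_k+q_{k-1})$, whose length is $1/(q_k(q_k+q_{k-1}))$ by~(\ref{lemdvlptfractcontinue1}); the bounds in~(\ref{lemdvlptfractcontinue6}) again use $q_{k-1}<q_k$, hence $q_k^2<q_k(q_k+q_{k-1})\le 2q_k^2$.

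For item 7, the lower bound $\prod_{j=1}^k a_j\le q_k$ and the upper bound $q_k\le\prod_{j=1}^k(a_j+1)$ follow by induction from $q_k=a_kq_{k-1}+q_{k-2}\ge a_kq_{k-1}$ and $q_k=a_kq_{k-1}+q_{k-2}\le(a_k+1)q_{k-1}$ (using $q_{k-2}\le q_{k-1}$), and $\prod(a_j+1)\le 2^k\prod a_j$ is trivial since $a_j\ge 1$. The analogous bounds for $p_k$ in~(\ref{lemdvlptfractcontinue7b}) are proved the same way, with base case $p_1=a_0a_1+1$ giving the factor $(1+a_0a_1)$. Finally, item 8: the ratio $\lambda(E_{\bm k,u}^{(t)})/\lambda(E_{\bm k}^{(t)})$ is, by~(\ref{lemdvlptfractcontinue6}) applied at levels $d+1$ and $d$, equal to $q_d(q_d+q_{d-1})/(q_{d+1}(q_{d+1}+q_d))$ where $q_{d+1}=uq_d+q_{d-1}$; writing everything in terms of $u$ and $\rho:=q_{d-1}/q_d\in[0,1)$ reduces the claim to the elementary inequalities $1/(3u^2)<(1+\rho)/((u+\rho)(u+2\rho))<2/u^2$, valid for $u\ge 1$ and $0\le\rho<1$, which I would verify by bounding $(u+\rho)(u+2\rho)$ between $u^2$ and $3u^2$ and $1+\rho$ between $1$ and $2$. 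I do not anticipate a genuine obstacle here since every item is classical; the only mild care needed is keeping the index conventions $p_{-1}=1$, $q_{-1}=0$ consistent throughout the inductions, and making sure the constants $3$ and $2$ in~(\ref{lemdvlptfractcontinue8}) are not too tight — a quick check shows $(u+\rho)(u+2\rho)\le(u+1)(u+2)\le 3u^2$ for $u\ge 2$ and directly for $u=1$ with $\rho<1$, which suffices.
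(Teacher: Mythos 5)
The paper itself gives no written proof of this lemma beyond a one-line remark that items (\ref{lemdvlptfractcontinue7a})--(\ref{lemdvlptfractcontinue7b}) follow by induction from (\ref{lemdvlptfractcontinue5}) and a citation of Cassels and Bugeaud for the rest. Your sketch of items 1--7 is a sound rendition of the standard arguments and matches what those references do, so there is nothing to quarrel with there.

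In item 8, however, you have an algebraic slip. The ratio is
\[
\frac{\lambda\bigl(E_{\bm k,u}^{(t)}\bigr)}{\lambda\bigl(E_{\bm k}^{(t)}\bigr)}
=\frac{q_d\bigl(q_d+q_{d-1}\bigr)}{q_{d+1}\bigl(q_{d+1}+q_d\bigr)},
\qquad q_{d+1}=uq_d+q_{d-1}.
\]
Dividing numerator and denominator by $q_d^2$ and setting $\rho:=q_{d-1}/q_d$ gives $(q_{d+1}+q_d)/q_d=u+\rho+1$, so the ratio becomes
\[
\frac{1+\rho}{(u+\rho)(u+\rho+1)},
\]
and not $\dfrac{1+\rho}{(u+\rho)(u+2\rho)}$ as you wrote. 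Your $u+2\rho$ would arise from $q_{d+1}+q_{d-1}$, not $q_{d+1}+q_d$. As a sanity check, with $t=1$, $\bm k=(2)$, $u=3$ (so $q_0=1$, $q_1=2$, $q_2=7$) the true ratio is $\frac{1/63}{1/6}=\frac{2}{21}$, which the corrected formula with $\rho=\tfrac12$, $u=3$ reproduces, whereas your formula gives $\frac{3}{28}$.

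A second, smaller point: since $d\ge1$ we have $q_{d-1}\ge1$, so $\rho=q_{d-1}/q_d\in(0,1]$, not $[0,1)$. In particular $\rho=1$ occurs when $d=1$ and $k_1=1$. With the corrected formula, the case $u=1$, $\rho=1$ yields $\frac{1+\rho}{(u+\rho)(u+\rho+1)}=\frac{2}{2\cdot3}=\frac{1}{3}=\frac{1}{3u^2}$, so the strict lower bound in (\ref{lemdvlptfractcontinue8}) is actually attained with equality in that degenerate configuration; the constant $3$ is not slack there as your closing remark suggests. (This is a defect in the paper's $<$ sign as well, but it means your reassurance that the constants ``are not too tight'' is not quite right.) The verification of the two-sided bound should therefore treat $u=1$ separately and either exclude the boundary case $\rho=1$ or weaken $<$ to $\le$ there; for $u\ge2$ the inequalities $u^2<(u+\rho)(u+\rho+1)\le(u+1)(u+2)$ together with $1<1+\rho\le2$ do the job after a small computation.
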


\begin{proof}
Inequalities~(\ref{lemdvlptfractcontinue7a}) and~(\ref{lemdvlptfractcontinue7b}) can easily be obtained by induction from relations~(\ref{lemdvlptfractcontinue5}). All the other results are standard. See for instance Chapter 1 of~\cite{casselsapproxdioph} or Chapter 1 of~\cite{bugeaudlivre1} for proofs.
\end{proof}

The following generalizes a result well--known in the case $a_1=a_2=1$. The proof, which is elementary, is left to the reader.

\begin{lem}\label{lemelementaire}
For $i=1,2$, let $a_i \ge 0, b_i\ge 0$ and $m_i\ge 1$ denote natural integers. Then, the system of equations 
$$
\left\{
    \begin{array}{ll}
        a_1 x \equiv b_1 \imod{m_1} \\
        a_2 x \equiv b_2 \imod{m_2}
    \end{array}
\right.
$$
admits a solution $x\in\Z$ if, and only if, $$\gcd(m_1,a_1)\,|\, b_1, \quad \gcd(m_2,a_2)\,|\, b_2 \quad \mbox{and} \quad \gcd(a_1m_2, a_2m_1)\, |\, a_1b_2 - a_2b_1.$$
\end{lem}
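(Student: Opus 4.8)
The plan is to prove the two implications separately; in each direction I would first reduce to the case $\gcd(a_i,m_i)=1$ and then invoke the classical criterion for the solvability of a pair of congruences with arbitrary moduli (the case $a_1=a_2=1$ of the lemma).

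Necessity is purely mechanical, so I would dispose of it first. If $x\in\Z$ solves the system, write $a_1x-b_1=m_1y$ and $a_2x-b_2=m_2z$ with $y,z\in\Z$. Reducing the $i$-th relation modulo $\gcd(a_i,m_i)$ gives at once $\gcd(a_1,m_1)\mid b_1$ and $\gcd(a_2,m_2)\mid b_2$. Multiplying the first relation by $a_2$, the second by $a_1$, and subtracting yields $a_1b_2-a_2b_1=a_2m_1y-a_1m_2z$, which lies in $a_1m_2\Z+a_2m_1\Z=\gcd(a_1m_2,a_2m_1)\Z$; hence the third divisibility also holds.

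For sufficiency I would first reduce to the coprime case. Set $d_i:=\gcd(a_i,m_i)$; since the hypotheses include $d_i\mid b_i$, the $i$-th congruence is equivalent to $(a_i/d_i)x\equiv(b_i/d_i)\imod{m_i/d_i}$, and one checks prime by prime that $\gcd(a_i/d_i,m_i/d_i)=1$. Expanding $a_1m_2$, $a_2m_1$ and $a_1b_2-a_2b_1$ shows that each acquires the common factor $d_1d_2$ under this substitution, so the third hypothesis is preserved verbatim; it therefore suffices to assume $\gcd(a_1,m_1)=\gcd(a_2,m_2)=1$. In that situation $a_i$ is invertible modulo $m_i$, the $i$-th congruence becomes $x\equiv a_i^{-1}b_i\imod{m_i}$, and the classical two-modulus Chinese remainder theorem says the system is solvable if and only if $\gcd(m_1,m_2)$ divides $a_1^{-1}b_1-a_2^{-1}b_2$ (a quantity well defined modulo $\gcd(m_1,m_2)$). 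Multiplying by $a_1a_2$, which is invertible modulo $\gcd(m_1,m_2)$ because $\gcd(a_i,m_i)=1$, turns this into the condition $\gcd(m_1,m_2)\mid(a_1b_2-a_2b_1)$.

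The last point, and the only one requiring a little care, is to identify this with the stated condition $\gcd(a_1m_2,a_2m_1)\mid(a_1b_2-a_2b_1)$. Comparing $p$-adic valuations and using $\gcd(a_i,m_i)=1$, one finds the factorisation $\gcd(a_1m_2,a_2m_1)=\gcd(m_1,m_2)\cdot\gcd(a_1,a_2)$ with the two factors on the right coprime (a prime dividing both $a_1$ and $a_2$ divides neither $m_1$ nor $m_2$). Since $\gcd(a_1,a_2)$ trivially divides $a_1b_2-a_2b_1$, divisibility by $\gcd(a_1m_2,a_2m_1)$ is equivalent to divisibility by $\gcd(m_1,m_2)$, which together with the (necessary) conditions $d_i\mid b_i$ closes the argument. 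I expect this \emph{gcd bookkeeping} — recognising that the modulus $\gcd(m_1,m_2)$ occurring naturally in the reduced problem inflates to $\gcd(a_1m_2,a_2m_1)$ without affecting the divisibility — to be the main and essentially the only obstacle; everything else is routine manipulation of linear Diophantine equations and the standard theory of congruences.
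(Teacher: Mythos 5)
Your proof is correct. Note that the paper gives no argument to compare against: it states Lemma~\ref{lemelementaire} and explicitly leaves the (elementary) proof to the reader, so your write-up simply supplies the omitted details. The necessity computation ($a_1b_2-a_2b_1=a_2m_1y-a_1m_2z\in a_1m_2\Z+a_2m_1\Z$) is fine, and the sufficiency argument is the natural one: dividing the $i$-th congruence by $d_i=\gcd(a_i,m_i)$ scales $a_1m_2$, $a_2m_1$ and $a_1b_2-a_2b_1$ all by $d_1d_2$, so the third hypothesis passes to the reduced data; and in the coprime case the crucial identity $\gcd(a_1m_2,a_2m_1)=\gcd(m_1,m_2)\gcd(a_1,a_2)$ (checked on $p$-adic valuations, the two factors being coprime and $\gcd(a_1,a_2)$ dividing $a_1b_2-a_2b_1$ trivially) correctly converts the stated condition into the classical two-modulus compatibility condition $\gcd(m_1,m_2)\mid a_1b_2-a_2b_1$, equivalently $\gcd(m_1,m_2)\mid a_1^{-1}b_1-a_2^{-1}b_2$. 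The only items you pass over silently are the degenerate cases $a_i=0$ (where $d_i=m_i$, the reduced modulus is $1$, and the conventions $\gcd(0,n)=n$, $0\mid 0$ make everything go through), but these are immediate and do not affect the argument.
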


\subsection{Non--metrical point of view}\label{nonmetrunif}

It is remarkable that, in the case where $\xi$ is a badly approximable irrational number, the result of Theorem~\ref{CNSapproxunifarithnonmetr} can be generalized by proving that $\xi$ admits an inhomogeneous uniform $(a,b,r,s)$--approximation with exponent 1. In what follows, Bad denotes the set of badly approximable irrationals.

\begin{prop}\label{propunifinhomgbad}
Let $\xi\in \mbox{Bad}$ and $\alpha\in\R$.

Then, there exists a constant $c(\xi)>0$ such that, for all real numbers $Q\ge 2b$, there are integers $m$ and $n$ satisfying
$$\left|(bn+s)\xi - (am+r)+\alpha\right|\, \le \, \frac{c(\xi)}{Q} \quad \mbox{ and } \quad 0\le bn+s \le Q.$$
Furthermore, $c(\xi)=2ab(M+2)$ is an admissible value, where $M$ is an upper bound for the partial quotients of $b\xi/a$.
\end{prop}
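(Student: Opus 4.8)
The plan is to reduce the inhomogeneous $(a,b,r,s)$-approximation problem for $\xi\in\mathrm{Bad}$ to a classical inhomogeneous one-dimensional approximation statement for the irrational $\theta:=b\xi/a$, which is itself badly approximable (its partial quotients being bounded by some $M$, as recalled in the hypothesis). First I would rewrite the target inequality: for given $Q\ge 2b$ we seek integers $m,n$ with $0\le bn+s\le Q$ and $\lvert (bn+s)\xi-(am+r)+\alpha\rvert\le c(\xi)/Q$. Dividing through by $a$ and setting $N:=bn+s$, this amounts to finding an integer $N$ of the prescribed shape in $[\,1,Q\,]$ (after dealing with the boundary value $N=s$ when $s=0$) together with an integer $M'=am+r$ such that $\lvert N\theta - M' + \alpha/a\rvert$ is small; the congruence $N\equiv s\imod b$ must be enforced on the left variable and $M'\equiv r\imod a$ on the right variable. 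Absorbing the residue $s$ by writing $N=bn+s$ gives $N\theta = b n\theta + s\theta$, so up to the fixed shift $s\theta$ one is approximating the shifted point $\beta:=(\alpha/a + s\theta - (\text{something}))$ by the orbit $\{bn\theta\}$ with the extra constraint that the nearest integer be $\equiv r\imod a$; this last constraint on the numerator is what forces one to look at multiples $bn\theta$ and, after a further change of variable, at the orbit of $ab\cdot(\text{irrational})$, which remains badly approximable with partial quotients controlled in terms of $M$ — hence the factor $ab$ and the $M$-dependence in the claimed constant $c(\xi)=2ab(M+2)$.

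The key analytic input is the three-distance (Steinhaus) theorem together with the standard fact that for a badly approximable $\vartheta$ the orbit $\{n\vartheta\}_{0\le n\le Q}$ is $\asymp 1/Q$-dense, with the implied constant governed by the largest partial quotient of $\vartheta$. Concretely: given $Q$, pick $k$ with $q_k\le Q < q_{k+1}$ for the convergents of the relevant badly approximable number; the three-distance theorem says the points $\{n\vartheta\}$, $0\le n\le Q$, partition the circle into gaps of at most three distinct lengths, each of size $\le (a_{k+1}+1)\,\eta_k \le (M+1)/q_k \le (M+1)/q_{k}$, and since $q_{k+1}\le (a_{k+1}+1)q_k\le (M+1)q_k$ one gets $q_k > Q/(M+1)$, so every gap has length $\le (M+1)^2/Q$ — more than enough, after tightening, to land within $c/Q$ of any prescribed target point $\beta$. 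The residue constraints are handled by passing from the step $\vartheta$ to the step $b\vartheta$ (to force $N\equiv s\imod b$) and from the integer part to $am+r$ (to force the numerator congruence), each such passage multiplying the relevant irrational by at most $a$ or $b$ and hence multiplying the bound on the partial quotients by a bounded amount; tracking these constants carefully yields the admissible value $2ab(M+2)$ and the threshold $Q_0=2b$.

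The main obstacle will be the simultaneous handling of the two congruential constraints — on $bn+s$ for the denominator and on $am+r$ for the numerator — within a single application of the three-distance theorem: one must choose the right auxiliary irrational (essentially $b\xi/a$ rescaled so that both residue classes become automatic) so that a single density statement delivers both constraints at once, rather than intersecting two separate arithmetic-progression conditions. I would organize this as: (i) reduce to approximating a fixed real $\beta$ by $\{n\cdot(b\xi/a)\}$ with $0\le n\le Q/b$ roughly; (ii) invoke the three-distance theorem and the boundedness of the partial quotients of $b\xi/a$ to get a point of this orbit within $(M+2)/Q$-ish of $\beta$; (iii) unwind the substitutions, checking that the recovered $n$ gives $bn+s$ in $[0,Q]$ and that the corresponding integer is of the form $am+r$, keeping the numerical constants under control to reach $c(\xi)=2ab(M+2)$. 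Monotonicity and the elementary estimates of Lemma~\ref{lemfractioncontinuepropr} (parts 3, 5, 7) supply all the quantitative bounds needed, so no genuinely new idea beyond careful bookkeeping should be required.
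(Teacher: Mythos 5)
Your closing plan, steps (i)--(iii), is essentially the paper's argument: apply the three distance theorem to the orbit of $b\xi/a$ (whose partial quotients are, \emph{by hypothesis}, bounded by $M$), locate an orbit point close to the target, and unwind the scaling. The paper's realisation of this is to take $Q'\asymp Q/(2b)$ orbit points $\{i\,b\xi/a\}$, $0\le i\le Q'$; the three distance theorem gives gap lengths at most $\eta_{k-2}$, and multiplying the torus $\R/\Z$ by $a$ shows that the point $-(s\xi-r+\alpha)$ lies within $a\eta_{k-2}/2$ of $nb\xi \pmod a$ for some $0\le n\le Q'$, i.e.\ $|(bn+s)\xi-(am+r)+\alpha|\le a\eta_{k-2}/2$ with $0\le bn+s\le 2bQ'$. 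Then $Q'\eta_{k-2}\le a_k+2\le M+2$ from~(\ref{lemdvlptfractcontinue4}) and the greedy decomposition~(\ref{algogloutonexpress}), and cleaning up the integer-versus-real $Q$ gives $c(\xi)=2ab(M+2)$ with threshold $Q_0=2b$.

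Where the write-up goes astray is the middle paragraph, which proposes "passing from the step $\vartheta$ to the step $b\vartheta$" or to "the orbit of $ab\cdot(\text{irrational})$" and asserts that such a passage "multiplies the bound on the partial quotients by a bounded amount." This is not a safe step and is not what the paper does. If $\theta$ has partial quotients $\le M$, the partial quotients of $b\theta$ are \emph{not} bounded by $\asymp bM$; in general multiplying by $b$ can worsen the bound by a factor on the order of $b^2$, and chasing constants along that route would not recover the clean value $2ab(M+2)$. In the paper the irrational is never changed: the factor $a$ comes purely from rescaling the circle $\R/\Z$ to $\R/a\Z$ so that the target $s\xi-r+\alpha$ and the numerator congruence modulo $a$ are absorbed simultaneously, the factor $b$ comes from the constraint $bn+s\le 2bQ'$ forcing $Q'\asymp Q/(2b)$, and $M+2$ comes from the gap estimate. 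If you strike the "multiplying the irrational" reasoning and carry out (i)--(iii) with this bookkeeping, the proof closes exactly as in the paper.
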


Proposition~\ref{propunifinhomgbad} will follow without much difficulty from the \emph{three distance theorem}, also called in the literature \emph{the Steinhaus theorem}, \emph{the three length}, \emph{three gap} or \emph{three step theorem}. The latter states that, for any positive integer $Q$ and for any irrational $\xi$, the points $\left(\{i\xi\} \right)_{0\le i \le Q}$ partition the unit interval into $Q+1$ subintervals, the lengths of which take at most three values, one being the sum of the other two (here, $\{x\}$ denotes the fractional part of a real number $x$). The reader is referred to~\cite{thmdes3dist} for a complete survey on the topic and to the references therein for various proofs of the precise statement of the result given below. The latter uses the fact that, for any integer $Q\ge 1$, there exist unique integers $k\ge 1$, $p$ and $w$ such that 
\begin{align}\label{algogloutonexpress}
Q=pq_{k-1} + q_{k-2}+w \quad \mbox{ with } \quad 1\le p \le a_{k} \quad \mbox{ and } \quad 0\le w <q_{k-1},
\end{align}
where $(q_k)_{k\ge 0}$ is the sequence of the denominators of the convergents of a given irrational $\xi$. Such a decomposition can be obtained thanks to the greedy algorithm. The notation introduced in Lemma~\ref{lemfractioncontinuepropr} is kept in the statement of the three distance theorem, in particular see~(\ref{lemdvlptfractcontinue4}) for the definition of $\eta_k$.

\begin{thm}[The three distance theorem]\label{thm3dist}
Let $\xi$ be an irrational and let $Q\ge 1$ be a positive integer given in the form~(\ref{algogloutonexpress}).

Then, the unit interval is divided by the points $0$, $\{\xi\}$, $\{2\xi\}, \dots , \{Q\xi\}$ into $Q+1$ subintervals which satisfy the following conditions~:
\begin{itemize}
\item $Q+1-q_{k-1}$ of them have length $\eta_{k-1}$,
\item $w+1$ have length $\eta_{k-2}-p\eta_{k-1}$,
\item $q_{k-1}-(w+1)$ have length $\eta_{k-2}-(p-1)\eta_{k-1}$.
\end{itemize}
\end{thm}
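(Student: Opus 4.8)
The plan is to work on the circle $\mathbb{T}=\R/\Z$ and to determine explicitly, for every $j\in\{0,\dots,Q\}$, the clockwise successor of the point $\{j\xi\}$ among the $Q+1$ points $\{0\},\{\xi\},\dots,\{Q\xi\}$ (these are pairwise distinct since $\xi\notin\Q$). Writing the clockwise arc-length from $\{j\xi\}$ to $\{i\xi\}$ as $\{(i-j)\xi\}$, let $m\in\{1,\dots,Q\}$ be the index minimising $\{m\xi\}$ and $n\in\{1,\dots,Q\}$ the index maximising $\{n\xi\}$, so that $\{m\xi\}$ and $1-\{n\xi\}$ are exactly the lengths of the two arcs adjacent to $0$. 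First I would record the key inequality $m+n\ge Q+1$: otherwise $\{(m+n)\xi\}$ would have index $\le Q$ and lie strictly between $0$ and $\{m\xi\}$, using that $\{m\xi\}+\{n\xi\}>1$ (itself a consequence of the extremality of $n$), contradicting the choice of $m$.

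Next comes the structural lemma: for each $j$, the clockwise successor of $\{j\xi\}$ is $\{(j+m)\xi\}$, at arc-length $\{m\xi\}$, when $j+m\le Q$ and $j<n$; it is $\{(j-n)\xi\}$, at arc-length $1-\{n\xi\}$, when $j\ge n$; and it is $\{(j+m-n)\xi\}$, at arc-length $\{m\xi\}+1-\{n\xi\}$, when $j+m>Q$ and $j<n$. These three cases are exhaustive, and the inequality $m+n\ge Q+1$ guarantees that the indices produced lie in $\{0,\dots,Q\}$. Each assertion is verified by showing that no $\{i\xi\}$ with $0\le i\le Q$ lies strictly inside the relevant arc; after translating $i$ by $-j$ this reduces, in both directions, to the extremality of $m$ and of $n$ together with $\{m\xi\}+\{n\xi\}>1$. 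Counting the values of $j$ in each case is then immediate: the first case occurs for $0\le j\le Q-m$ (and $Q-m\le n-1$ by the key inequality), that is $Q-m+1$ times; the second occurs $Q-n+1$ times; the third occurs $m+n-Q-1$ times; and $(Q-m+1)+(Q-n+1)+(m+n-Q-1)=Q+1$, so nothing is missed.

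The final step is to evaluate $m$, $n$, $\{m\xi\}$ and $1-\{n\xi\}$ via the continued fraction of $\xi$. Using the decomposition $Q=pq_{k-1}+q_{k-2}+w$ with $1\le p\le a_k$, $0\le w<q_{k-1}$, and the classical description of one-sided best approximations — the convergents $p_\ell/q_\ell$ together with the intermediate fractions $(p_{k-2}+ip_{k-1})/(q_{k-2}+iq_{k-1})$ for $1\le i\le a_k$ — one finds that the two indices $\le Q$ closest to $0$ on the two sides are $q_{k-1}$, with closeness $|q_{k-1}\xi-p_{k-1}|=\eta_{k-1}$, and $q_{k-2}+pq_{k-1}$, with closeness $|(q_{k-2}+pq_{k-1})\xi-(p_{k-2}+pp_{k-1})|=\eta_{k-2}-p\eta_{k-1}$; here the condition $w<q_{k-1}$ is what ensures that $q_{k-1}+q_k$ and $q_{k-2}+(p+1)q_{k-1}$ both exceed $Q$. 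Substituting $\{m,n\}=\{q_{k-1},\,q_{k-2}+pq_{k-1}\}$ into the three counts — and noting $m+n-Q-1=q_{k-1}-(w+1)$ and $Q+1-(q_{k-2}+pq_{k-1})=w+1$ — yields precisely the lengths $\eta_{k-1}$, $\eta_{k-2}-p\eta_{k-1}$, $\eta_{k-2}-(p-1)\eta_{k-1}$ with multiplicities $Q+1-q_{k-1}$, $w+1$, $q_{k-1}-(w+1)$, irrespective of the parity of $k$ (which only interchanges the roles of $m$ and $n$).

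The main obstacle is the structural lemma together with the identification of $m$ and $n$: both lean on the classical theory of best and best one-sided approximations by continued fractions, and the bookkeeping — tracking which arc gets split and checking that no orbit point sits inside it — must be done carefully, especially when $p=a_k$, since then $q_{k-2}+pq_{k-1}=q_k$ and $\eta_{k-2}-p\eta_{k-1}=\eta_k<\eta_{k-1}$, so the shortest arc changes sides. (An alternative route is an induction on $Q$, adding the point $\{(Q+1)\xi\}$ and showing it always falls in an arc of the unique largest length, splitting it into the two smaller lengths; this too reproduces the stated multiplicities, with sub-cases according to whether $w+1<q_{k-1}$, or $w+1=q_{k-1}$ with $p<a_k$, or $w+1=q_{k-1}$ with $p=a_k$.) Everything beyond these points is elementary arithmetic with the recurrences of Lemma~\ref{lemfractioncontinuepropr}.
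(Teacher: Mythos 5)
The paper does not prove this theorem: it is quoted from the literature, with a pointer to the survey~\cite{thmdes3dist} and the references therein for ``various proofs of the precise statement.'' So there is nothing in the paper to compare your argument against line by line. Your proposal is one of the classical proofs of the three--gap theorem, and its architecture --- the two generators $m$ and $n$, the bound $m+n\ge Q+1$, the three--case successor rule, the multiplicity count, and the identification of $m$ and $n$ through one--sided best approximants --- is sound. The final arithmetic is also right: $\{m,n\}=\{q_{k-1},\,q_{k-2}+pq_{k-1}\}$, the constraint $w<q_{k-1}$ indeed forces $q_{k-1}+q_k>Q$ and $q_{k-2}+(p+1)q_{k-1}>Q$, and the counts $Q+1-q_{k-1}$, $w+1$, $q_{k-1}-(w+1)$ come out as stated, with parity only swapping the roles of $m$ and $n$.

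The one step that is wrong as written is the unconditional use of $\{m\xi\}+\{n\xi\}>1$. That inequality is false in general. Take $\{\xi\}=0.67$ and $Q=5$: then $m=3$, $n=4$, $\{m\xi\}+\{n\xi\}=0.01+0.68=0.69<1$, and yet $m+n-Q-1=1>0$ so the long gap does appear. Inside your proof of $m+n\ge Q+1$ the inequality is invoked only under the contradiction hypothesis $m+n\le Q$, where the argument (via extremality of $n$) is legitimate; but in the verification of the successor rule you use it unconditionally, and moreover the naive comparison it would support in Case~1 --- namely $\{\ell\xi\}=1-\{\ell'\xi\}\ge 1-\{n\xi\}\ge\{m\xi\}$ --- actually requires the \emph{opposite} inequality $\{m\xi\}+\{n\xi\}\le 1$. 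What really closes each case is a shift argument that needs only the extremality of $m$ and $n$ over $\{1,\dots,Q\}$ together with $m+n\ge Q+1$. For instance in Case~1 ($j+m\le Q$, $j<n$) with $\ell=i-j<0$ and $\ell':=-\ell\in\{1,\dots,j\}$: if $\{\ell'\xi\}>1-\{m\xi\}$, then
\begin{equation*}
\{(\ell'+m)\xi\}\,=\,\{\ell'\xi\}+\{m\xi\}-1\,\in\,(0,\{m\xi\}),
\end{equation*}
while $1\le \ell'+m\le j+m\le Q$, contradicting the minimality of $m$; hence $\{\ell\xi\}\ge\{m\xi\}$. Cases~2 and~3 are handled by the analogous shifts by $n$ and by $m$ respectively (and in Case~3 the ranges $-j<\ell<m$, guaranteed by $j+m>Q$ and $j<n$, are what make the shifted indices land in $\{1,\dots,Q\}$). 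With that repair, the proposal is a complete and correct proof.
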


\begin{rem}
As $\xi$ is irrational, the three lengths are distinct. The third length, which is the largest since it is the sum of the other two, does not always appear. The other two do always appear.
\end{rem}

\begin{proof}[Proof of Proposition~\ref{propunifinhomgbad} from Theorem~\ref{thm3dist}]
In this proof, $(a_k)_{k\ge 0}$ (resp.~$(p_k/q_k)_{k\ge 0}$) refers to the sequence of the partial quotients (resp.~of the convergents) of the irrational $b\xi /a$, where $\xi\in \mbox{Bad}$. The integer $M$ shall denote an upper bound for the sequence $(a_k)_{k\ge 0}$.

Let $Q\ge 1$ be an integer and let $k\ge 1$, $p$ and $w$ be integers as given by~(\ref{algogloutonexpress}). From Theorem~\ref{thm3dist}, the unit interval is partitioned by the numbers $\left(\{ib\xi/a\} \right)_{0\le i \le Q}$ into $Q+1$ subintervals of lengths at most $\eta_{k-2}$. Modulo $a$ this is saying that the point $s\xi - r+\alpha$ lies within a distance $a\eta_{k-2}/2$ from $b\xi n$ for some integer $n$ in the interval $\llbracket 0,Q \rrbracket$. In other words, there exist $m\in\Z$ and $n\in \llbracket 0,Q \rrbracket$ such that $$\left|\left(b\xi n - am\right) + \left(s\xi-r+\alpha\right)\right|\, \le \, \frac{a\eta_{k-2}}{2} \quad \mbox{ and } \quad 0\,\le \, bn+s \, \le \, bQ+s\, \le\, 2bQ,$$
whence 
\begin{align*}
Q\left|\xi\left(bn+s\right) -\left(am+r\right)+\alpha \right|\, & \le \, \frac{a}{2}Q\eta_{k-2}\\
&\underset{(\ref{lemdvlptfractcontinue4})\& (\ref{algogloutonexpress})}{\le} \, \frac{a}{2} \left(a_k+2\right)\\
&\le \,\frac{a}{2} (M+2).
\end{align*}
Assume now that $Q\ge 2b$ is a real number and set $Q'=\lfloor Q/(2b) \rfloor \ge 1$~: from what precedes, there exist integers $m$ and $n$ such that $0\le bn+s\le 2bQ' \le Q$ and $$Q\left|\xi\left(bn+s\right) -\left(am+r\right)+\alpha \right|\, \le \, \frac{Q}{Q'}\frac{a}{2}(M+2)\, \le \, \left(1+\frac{1}{Q'} \right)ab(M+2)\, \le \, 2ab(M+2).$$
This completes the proof of Proposition~\ref{propunifinhomgbad}.
\end{proof}

\paragraph{}
The rest of this subsection is devoted to the proof of Theorem~\ref{CNSapproxunifarithnonmetr}, where the notation introduced in Lemma~\ref{lemfractioncontinuepropr} will be systematically used with respect to a fixed $\xi\in\R\backslash\Q$. To this end, first notice that, given $m,n\in\Z$ and $k\ge 1$, it follows from~(\ref{lemdvlptfractcontinue1}) that there exists a unique pair $(u,v)\in\Z^2$, given by 
\begin{align}
u\, &=\, (-1)^{k-1} \left[(am+r)q_{k-1}-(bn+s)p_{k-1}\right] \label{uv1},\\
v\, &=\, (-1)^{k-1} \left[(bn+s)p_{k-2} - (am+r)q_{k-2} \right], \label{uv2}
\end{align}
such that 
\begin{align}
am+r\, & = \, up_{k-2}+vp_{k-1} \label{mn1},\\
bn+s\, & = \, uq_{k-2}+vq_{k-1} \label{mn2}.
\end{align}
Furthermore, in this case, on noticing that  $$\frac{\left|\xi(bn+s)-(am+r)\right|}{q_{k-1}\left|q_{k-2}\xi - p_{k-2}\right|}\, = \, \frac{1}{q_{k-1}}\cdotp \frac{\left|\xi\left( uq_{k-2}+vq_{k-1}\right)-\left(up_{k-2}+vp_{k-1}\right)\right|}{\left|q_{k-2}\xi-p_{k-2}\right|}\, = \, \frac{1}{q_{k-1}} \left|u+v\phi_{k-1}\right|,$$ where $\phi_{k-1}$ has been defined in~(\ref{lemdvlptfractcontinue3}), inequalities~(\ref{lemdvlptfractcontinue4}) imply that
\begin{equation}\label{uvnmkQ}
\frac{1}{2q_{k-1}} \left|u+v\phi_{k-1}\right|\, < \, \left|\xi(bn+s)-(am+r)\right| \, < \, \frac{1}{q_{k-1}} \left|u+v\phi_{k-1}\right|.
\end{equation}

\begin{proof}[Proof of the necessary part of Theorem~\ref{CNSapproxunifarithnonmetr}]
Assume that there exists a strictly in\-crea\-sing sequence $(k_l)_{l\ge 0}$ of natural integers such that conditions~(\ref{conditionfondaapproxunif}) are not met for the index $k_l$ and such that 
\begin{equation}\label{preuvethmCNS0}
\lim_{l\rightarrow \infty} \, \frac{a_{k_l}}{\widetilde{\Psi}\left(q_{k_l}\right)} \, = \, \infty .
\end{equation} 
For a contradiction, assume that there is a $Q_0\ge1$ such that for each integer $Q\ge Q_0$, there exist $m,n\in\Z$ satisfying 
\begin{equation}\label{preuvethmCNS1}
0\le bn+s \le Q \quad \mbox{ and }\quad \Psi(Q)^{-1}\left|\xi(bn+s)-(am+r)\right|\,\le \, c(\xi)
\end{equation}
for some constant $c(\xi)>0$. Assuming without loss of generality that $k_0$ has been chosen in such a way that $q_{k_0}/2\ge Q_0$, set furthermore, for all $l\ge 0$, $$Q_{k_l}:=\left\lceil \frac{q_{k_l}}{2} \right\rceil,$$ where $\lceil \, . \, \rceil$ denotes the ceiling function. Let $l\ge 0$ and $m$ and $n$ be integers verifying~(\ref{preuvethmCNS1}) for the integer $Q_{k_l}$.
It then follows from~(\ref{uv2}) that 
\begin{align*}
\left|v\right|\, = \, q_{k_l-2}\left|(am+r)-(bn+s)\frac{p_{k_l-2}}{q_{k_l-2}}\right| \, & \underset{(\ref{preuvethmCNS1})}{\le} \, q_{k_l-2}\left|\xi(bn+s)-(am+r)\right| + q_{k_l-2} \left|\xi - \frac{p_{k_l-2}}{q_{k_l-2}}\right| Q_{k_l}\\
&\underset{(\ref{lemdvlptfractcontinue4})\&(\ref{preuvethmCNS1})}{\le} \, c(\xi) q_{k_l-2}\Psi\!\left( \left\lceil \frac{q_{k_l}}{2}\right\rceil\right) + \frac{\left\lceil q_{k_l}/2\right\rceil}{q_{k_l-1}}\\
&\underset{(\Psi \textrm{ decreases})}{\le} \, c(\xi) q_{k_l-2}\Psi\!\left(\frac{q_{k_l}}{2}\right) + \frac{ q_{k_l}/2 +1}{q_{k_l-1}}\\
&\underset{(\ref{lemdvlptfractcontinue2})}{\le} \, c(\xi) \frac{q_{k_l-2}}{q_{k_l}/2} \widetilde{\Psi}\!\left(\frac{q_{k_l}}{2}\right) +\frac{1}{2} (a_{k_l}+1) + \frac{1}{q_{k_l-1}} \\
& \underset{(\ref{conditiondecroissancepsi})}{=} \, O\left( \widetilde{\Psi}\left(q_{k_l}\right)\right)+\frac{a_{k_l}}{2} + O\left(1\right).
\end{align*}
Therefore, $$\left|v\phi_{{k_l}-1}\right| \, \underset{(\ref{lemdvlptfractcontinue3})}{\le} \, \frac{O\left( \widetilde{\Psi}\left(q_{k_l}\right)\right)+ a_{k_l}/2 + O\left(1\right)}{a_{k_l}} \, \underset{(\ref{preuvethmCNS0})}{=} \, 1/2+o(1).$$ On the other hand, the integer $u$ cannot equal zero in the representations~(\ref{mn1}) and~(\ref{mn2})~: indeed, this would otherwise contradict the fact that conditions~(\ref{conditionfondaapproxunif}) are not met for the index $k_l$ from Lemma~\ref{lemelementaire}.

Thus, since $|u|\ge 1$, one gets~:
\begin{align*}
\Psi\left(Q_{k_l}\right)^{-1}\left|\xi(bn+s)-(am+r)\right|\, & \underset{(\Psi \textrm{ decreases})}{\ge} \, \Psi\left(\frac{q_{k_l}}{2}\right)^{-1}\left|\xi(bn+s)-(am+r)\right|\\
& \underset{(\ref{uvnmkQ})}{\ge} \, \widetilde{\Psi}\left(\frac{q_{k_l}}{2}\right)^{-1} \frac{q_{k_l}/2}{2q_{k_l-1}}\left(\left|u\right| - \left|v\phi_{{k_l}-1}\right|\right)\\
& \underset{(\ref{conditiondecroissancepsi})}{\ge} \, \left(\kappa\widetilde{\Psi}\left(q_{k_l}\right)\right)^{-1} \frac{q_{k_l}}{4q_{k_l-1}}\left(1 - \frac{1}{2}+o(1)\right)\\
& \underset{(\ref{lemdvlptfractcontinue2})}{\ge} \, \frac{1}{4\kappa}\widetilde{\Psi}\left(q_{k_l}\right)^{-1} a_{k_l}\left(\frac{1}{2}+o(1)\right),
\end{align*}
which, from~(\ref{preuvethmCNS0}), contradicts~(\ref{preuvethmCNS1}) for $l$ large enough and completes the proof.
\end{proof}

The proof of the sufficiency of the conditions attached to~(\ref{conditionfondaapproxunif}) in Theorem~\ref{CNSapproxunifarithnonmetr} is more involved.

\begin{proof}[Proof of the sufficient part of Theorem~\ref{CNSapproxunifarithnonmetr}]
Assume that $Q\ge 1$ is an integer written in the form~(\ref{algogloutonexpress}) for some integers $k\ge1$, $p$ and $w$. From~(\ref{mn1}), (\ref{mn2}) and~(\ref{uvnmkQ}), the problem comes down to proving the existence of integers $u$ and $v$ (and so $m$ and $n$) such that an upper bound depending only on $a$,$b$, $\xi$ and $\Psi$ might be found for the quantity $$\widetilde{\Psi}(Q)^{-1}\frac{Q}{q_{k-1}}\left|u+v\phi_{k-1}\right|$$ under the constraint $0\le bn+s\le Q$.

To this end, set $d:=\gcd(bp_{k-1}, aq_{k-1})$ and consider the unique integer $u$ lying in the interval $\llbracket 0, d-1\rrbracket$ which satisfies the congruence 
\begin{equation}\label{preuveCSthm30}
u\equiv (-1)^{k-1}\left(rp_{k-1}-sq_{k-1}\right) \imod{d}.
\end{equation}
Since $p_{k-1}$ and $q_{k-1}$ are coprime, one has in fact 
\begin{equation}\label{preuveCSthm31}
0\, \le \, u\, \le \, ab.
\end{equation}
Furthermore, under these assumptions, the equation 
\begin{equation}\label{preuveCSthm32}
u - (-1)^{k-1}\left(rp_{k-1}-sq_{k-1}\right) \, =\, (-1)^{k-1}\left(aq_{k-1}m-bp_{k-1}n \right)
\end{equation} 
is solvable in $(m,n)\in\Z^2$ and the set of all solutions can be written in the form $$\left(m_0-(-1)^{k-1}\frac{bp_{k-1}}{d}h\, ;\, n_0-(-1)^{k-1}\frac{aq_{k-1}}{d}h \right),$$ where $h\in\Z$ and $(m_0, n_0)\in\Z^2$ is a particular solution. This implies that there exists a unique pair $(m,n)\in\Z^2$ satisfying~(\ref{preuveCSthm32}) with the additional constraint $0\le n < aq_{k-1}/d$. For such a pair, it should be clear that
\begin{equation}\label{preuveCSthm33}
0\, \le \, bn+s \, \le \, baq_{k-1}+s \, \underset{(\ref{algogloutonexpress})}{\le} \, \min\{abQ, 2abq_{k-1}\}.
\end{equation} 
 On the other hand, eliminating $am+r$ in equations~(\ref{uv1}) and~(\ref{uv2}) gives $$v=(-1)^{k-1}\left(\frac{p_{k-2}}{q_{k-2}}-\frac{p_{k-1}}{q_{k-1}}\right)q_{k-2}(bn+s) - u\frac{q_{k-2}}{q_{k-1}},$$ hence $$\left|u+v\phi_{k-1} \right|\, < \, \left|u\right|. \left|1-\phi_{k-1}\frac{q_{k-2}}{q_{k-1}}\right| + \left|bn+s\right|. \left|\phi_{k-1}\right|.\left|\frac{p_{k-2}}{q_{k-2}}-\frac{p_{k-1}}{q_{k-1}}\right|q_{k-2}.$$ Taking into account~(\ref{lemdvlptfractcontinue1}), (\ref{lemdvlptfractcontinue3}), (\ref{preuveCSthm31}) and~(\ref{preuveCSthm33}), this leads to the inequality 
\begin{equation}\label{preuveCSthm34}
\left|u+v\phi_{k-1}\right|\, < \, 2ab + 2ab \,=\, 4ab.
\end{equation} 
Since the function $\Psi$ is non--increasing and since $Q\le q_k+q_{k-1} \le 2q_k$ from~(\ref{algogloutonexpress}), for such a choice of the integers $u$ and $v$ (and so, of the integers $m$ and $n$), one has~:
\begin{align}
\Psi(Q)^{-1} \left| \xi (bn+s)- (am+r)\right| \, &\le \, \Psi(2q_k)^{-1} \left| \xi (bn+s)- (am+r)\right| \nonumber\\ 
&= \, 2q_k\widetilde{\Psi}(2q_k)^{-1} \left| \xi (bn+s)- (am+r)\right| \nonumber\\
& \underset{(\ref{uvnmkQ})}{\le} \, \frac{2q_k}{q_{k-1}}\widetilde{\Psi}(2q_k)^{-1} \left| u+v\phi_{k-1}\right| \nonumber\\
& \underset{(\ref{lemdvlptfractcontinue2})}{\le } \, 2\left(a_k+1\right)\widetilde{\Psi}(2q_k)^{-1} \left| u+v\phi_{k-1}\right| \nonumber\\
& \underset{(\ref{preuveCSthm34})}{\le } \, 16 a b\, a_k \widetilde{\Psi}(2q_k)^{-1} \, \underset{(\ref{conditiondecroissancepsi})}{\le} \, 16 \kappa a b \,a_k \widetilde{\Psi}(q_k)^{-1}. \label{16kappaabM}
\end{align}
Now, if conditions~(\ref{conditionfondaapproxunif}) are not satisfied, this last quantity is less than $16\kappa ab M$ for some integer $M\ge 1$. If, however, conditions~(\ref{conditionfondaapproxunif}) are met, instead of choosing $u$ according to the constraints~(\ref{preuveCSthm30}) and~(\ref{preuveCSthm31}), set $u=0$. Then, from Lemma~\ref{lemelementaire}, there exist $v\in\llbracket 0, ab\rrbracket$ and $(m,n)\in\Z^2$ such that $$vp_{k-1} = am+r \quad \mbox{ and }\quad vq_{k-1} = bn+s,$$ in which case $0\le bn+s\le abQ$ and, repeating the above calculations, 
\begin{align*}
\Psi(Q)^{-1} \left| \xi (bn+s)- (am+r)\right| \, &\le \, 2\left(a_k+\frac{1}{q_{k-1}}\right)\widetilde{\Psi}(2q_k)^{-1} \left| v\phi_{k-1}\right|\\
&\underset{(\ref{conditiondecroissancepsi})}{\le} \, 4\kappa ab\, a_k \widetilde{\Psi}(q_k)^{-1} \left|\phi_{k-1}\right|\\
&\underset{(\ref{conditiondecroissancepsi})\&(\ref{lemdvlptfractcontinue3})}{\le} \, 4\kappa ab \gamma^{-1}.
\end{align*}

Thus, it has been proved that for all integers $Q\ge 1$, there exist $(m,n)\in\Z^2$ such that $\Psi(Q)^{-1}\left|\xi(bn+s)-(am+r) \right|\le 4\kappa ab\max\{4M, \gamma^{-1} \}$ under the constraint $0\le bn+s \le abQ$. Assume now that $Q\ge ab$ is any real number and set $Q':=\left\lfloor Q/(ab)\right\rfloor \ge 1$. Then there exist integers $m$ and $n$ such that $0\le bn+s\le abQ' \le Q$ and 
\begin{align*}
\Psi(Q)^{-1}\left|\xi(bn+s)-(am+r) \right|\, & \le\, 4\kappa ab\max\{4M, \gamma^{-1}\}\frac{\Psi(Q')}{\Psi(Q)}\\
& \underset{(\Psi \textrm{ decreases})}{\le}\, 4\kappa ab\max\{4M, \gamma^{-1}\}\frac{\Psi(Q')}{\Psi(ab(Q'+1))}\\
&= 4\kappa (ab)^2\max\{4M, \gamma^{-1}\}\left(1+\frac{1}{Q'}\right)\frac{\widetilde{\Psi}(Q')}{\widetilde{\Psi}(ab(Q'+1))}\\
&\underset{(\ref{conditiondecroissancepsi})}{\le} \, 8\kappa\eta (ab)^2\max\{4M, \gamma^{-1}\}.
\end{align*}
This completes the proof of Theorem~\ref{CNSapproxunifarithnonmetr}.
\end{proof}

\begin{rem}\label{etsikgek_0}
If there exist integers $M\ge 1$ and $k_0\ge 1$ such that, for all $k\ge k_0$, the inequality $a_k\le M \widetilde{\Psi}(q_k)$ holds true whenever conditions~(\ref{conditionfondaapproxunif}) are not met, then the conclusion of Theorem~\ref{CNSapproxunifarithnonmetr} remains true upon choosing $Q_0=ab(q_{k_0-1}+q_{k_0-2})$ (which quantity equals $ab$ when $k_0=1$).

Indeed, the previous proof applies with the exception that the upper bound $16\kappa abM$ used for the right--hand side of~(\ref{16kappaabM}) when conditions~(\ref{conditionfondaapproxunif}) are not satisfied is only valid if $k\ge k_0$. From the uniqueness of the decomposition~(\ref{algogloutonexpress}), this imposes the condition $Q\ge q_{k_0-1}+q_{k_0-2}$. Therefore, in the last step of the proof, the integer $Q':=\left\lfloor Q/(ab)\right\rfloor$ will be asked to be bigger than $q_{k_0-1}+q_{k_0-2}$, hence the choice of $Q_0$ in this case.
\end{rem}

An interesting question related to Theorem~\ref{CNSapproxunifarithnonmetr} is to study the size of the set of well--approximable numbers admitting a Dirichlet type approximation in the context of $(a,b,r,s)$--approximation. In this respect, the following conjecture seems of relevance.

\begin{conj}
The set of real numbers which are not in Bad and which admit a uniform $(a,b,r,s)$--approximation with exponent 1 has full Hausdorff dimension.
\end{conj}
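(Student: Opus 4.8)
The plan is to derive the conjecture from the continued-fraction characterisation of Theorem~\ref{CNSapproxunifarithnonmetr} combined with an explicit Cantor-set construction. Applying that theorem with $\Psi(Q)=1/Q$, so that $\widetilde{\Psi}\equiv1$ and conditions~(\ref{conditiondecroissancepsi}) hold with $\gamma=1$ and $\kappa=\eta=1$, one sees that an irrational $\xi$ admits a uniform $(a,b,r,s)$--approximation with exponent $1$ if, and only if, the partial quotients $a_k(\xi)$ remain bounded along the set of indices $k$ at which conditions~(\ref{conditionfondaapproxunif}) \emph{fail}; call these indices \emph{bad} and the others \emph{good}. Since $\xi\notin\mbox{Bad}$ exactly when $(a_k(\xi))_{k\ge1}$ is unbounded, it suffices to construct, for every $\varepsilon>0$, a set of irrationals of Hausdorff dimension at least $1-\varepsilon$, each element of which has uniformly bounded partial quotients at every bad index but satisfies $a_k\to\infty$ along a sparse subsequence of \emph{good} indices. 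One may assume $\gcd(a,b,r,s)=1$: if $d:=\gcd(a,b,r,s)>1$ then, since $(am+r)/(bn+s)=((a/d)m+(r/d))/((b/d)n+(s/d))$, admitting a uniform $(a,b,r,s)$--approximation with exponent $1$ is equivalent to admitting one with respect to $(a/d,b/d,r/d,s/d)$, so it is enough to treat the coprime case.

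The combinatorial core I would isolate as a \emph{recurrence lemma}: there is a constant $C=C(a,b)$ such that, from any finite continued-fraction prefix $[a_0;a_1,\dots,a_{k-1}]$, one can append at most $C$ further partial quotients, each at most $C$, so that the index immediately following them is good. By the recursions~(\ref{lemdvlptfractcontinue5}), the residue of $(p_{k-1},q_{k-1})$ modulo a suitable integer $N=N(a,b)$ is transformed, when $a_k$ is appended, by multiplication by the matrix $\left(\begin{smallmatrix}a_k&1\\1&0\end{smallmatrix}\right)$ taken mod $N$, while conditions~(\ref{conditionfondaapproxunif}) depend only on $(p_{k-1},q_{k-1})\bmod N$. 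As $a_k\ge1$ runs over all residues mod $N$, these matrices comprise in particular the transposition $\left(\begin{smallmatrix}0&1\\1&0\end{smallmatrix}\right)$ and, through short products, the elementary matrices; hence the semigroup they generate contains a generating set of the group $\{A\in\mathrm{GL}_2(\mathbb{Z}/N):\det A\equiv\pm1\}$ and, being a sub-semigroup of a finite group, coincides with it. It follows that every coprime residue pair arises as a reachable $(p_{k-1},q_{k-1})\bmod N$ and that one can pass from any reachable configuration to any other in a number of steps bounded in terms of $N$. To finish the lemma one exhibits a single coprime pair satisfying~(\ref{conditionfondaapproxunif}): by Lemma~\ref{lemelementaire} it is enough to find $p\equiv r\imod{a}$ and $q\equiv s\imod{b}$ with $\gcd(p,q)=1$, which the Chinese remainder theorem supplies precisely because no prime divides $\gcd(a,b,r,s)=1$.

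Granting the recurrence lemma, fix a large integer $N$ and build $F_N\subset(0,1)$ by prescribing the partial quotients of its elements as follows: choose a rapidly increasing sequence of indices $n_1<n_2<\cdots$ with $n_{j+1}/n_j\to\infty$; let the partial quotients range freely over $\{1,\dots,N\}$ outside the blocks described next; in a block of length at most $C$ ending at index $n_j-1$ let them take the values furnished by the recurrence lemma, so that index $n_j$ is good; and set $a_{n_j}:=j$. Then every $\xi\in F_N$ is irrational with $a_{n_j}(\xi)\to\infty$, so $\xi\notin\mbox{Bad}$; and the partial quotient of $\xi$ at any bad index is at most $\max\{C,N\}$, whence $\xi$ admits a uniform $(a,b,r,s)$--approximation with exponent $1$ by Theorem~\ref{CNSapproxunifarithnonmetr}. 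Thus $F_N$ is contained in the set of the conjecture. Finally, the set of indices at which the partial quotients of $F_N$ are forced has density zero, so for $n_j$ growing sufficiently fast a standard mass distribution argument (see~\cite{bugeaudlivre1}) gives $\dim F_N\ge\dim E_N$, where $E_N\subset(0,1)$ denotes the irrationals all of whose partial quotients lie in $\{1,\dots,N\}$; since $\dim E_N\to1$ as $N\to\infty$ by the classical theorem of Jarn\'ik, letting $N\to\infty$ concludes the proof.

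The two appeals to Theorem~\ref{CNSapproxunifarithnonmetr}, the verification that $\xi\notin\mbox{Bad}$, and the dimension estimate for $F_N$ are routine. The substantive point is the recurrence lemma, and inside it the arithmetic assertion that a good residue configuration modulo $N(a,b)$ exists and is reachable from an arbitrary starting configuration in boundedly many steps. Its existence is in fact necessary for the conjecture to hold: were no index ever good, Theorem~\ref{CNSapproxunifarithnonmetr} would confine exponent-$1$ approximability to $\mbox{Bad}$. Establishing the semigroup identity and the uniform bounded-navigation property is where I expect the genuine work to lie.
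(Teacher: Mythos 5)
The paper does not prove this statement: it is presented as an open conjecture, and immediately after stating it the author remarks that a Cantor--set construction would be accessible when $\gcd(a,b)=1$ (by forcing all $q_k$ coprime to $b$ and all $p_k$ coprime to $a$, so that every index is good) but that the third condition in~(\ref{conditionfondaapproxunif}) is ``more delicate to deal with'' when $\gcd(a,b)>1$. Your attempt is therefore not being compared against a proof in the paper, but rather assessed as a possible resolution of the conjecture.

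Your outline appears to be sound, and the genuinely new ingredient --- the semigroup observation inside the ``recurrence lemma'' --- is precisely what the author's own sketch is missing for $\gcd(a,b)>1$. Instead of trying to keep \emph{every} index good (which is where the third condition becomes awkward when $\gcd(a,b)>1$), you only force goodness at a sparse set of indices $\{n_j\}$, and you get there by noting that the matrices $\left(\begin{smallmatrix}a_k&1\\1&0\end{smallmatrix}\right)$ reduced modulo a suitable $N$ generate, as a semigroup, the entire subgroup $\{\det\equiv\pm1\}$ of $\mathrm{GL}_2(\mathbb{Z}/N)$, so one can navigate from any $(p_{k-1},q_{k-1})$--state to a good one in boundedly many bounded steps. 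I checked that $N=ab$ works: for coprime $(p,q)$ one has $\gcd(bp,aq)\mid ab$, from which conditions~(\ref{conditionfondaapproxunif}) become a well--posed predicate on primitive residue classes $(p,q)\bmod ab$; the existence of a good primitive class follows from $p\equiv r\pmod a$, $q\equiv s\pmod b$ and Lemma~\ref{lemelementaire}, exactly as you say, using $\gcd(a,b,r,s)=1$. A few things would need to be said explicitly in a full write--up: each generator has determinant $-1$, so the walk length has a forced parity --- harmless, since a good first column can be completed to a matrix of either determinant; the inequality $\dim F_N\ge\dim E_N$ is stated slightly too strongly and should read $\dim F_N\ge\dim E_N-\varepsilon$ for every $\varepsilon>0$ (for $n_j$ growing fast enough), which is all that is needed since $\dim E_N\to1$; and the block of appended partial quotients should be padded to a fixed length so that the Cantor--set structure is uniform. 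None of these seems to be a genuine obstacle, so as far as I can tell your approach settles the conjecture.
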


Obviously, this conjecture is trivially true if $r=s=0$ from the discussion held in the introduction. On the other hand, the construction of a Cantor set to prove the conjecture seems easier in the case when $\gcd(a,b)=1$~: this is because, if one can ensure that the denominators $q_k$ (resp.~the numerators $p_k$) of the convergents of an irrational $\xi$ are all coprime to $b$ (resp.~to $a$), then conditions~(\ref{conditionfondaapproxunif}) always hold true. However, if $\gcd(a,b)>1$, the third condition in~(\ref{conditionfondaapproxunif}) turns out to be more delicate to deal with.

\subsection{Metrical point of view}

This subsection is devoted to the proof of Corollary~\ref{resultatprincikhintchiunif}. Throughout, the result will be established in the case where $s\neq 0$~: it is not difficult to verify that the reasoning below can easily be modified to obtain the same result in the case where $s=0$ and $r\neq 0$ working with the numerators of the convergents rather than with the denominators.

Consider a function $\Psi$ satisfying the assumptions of Corollary~\ref{resultatprincikhintchiunif}. Since $\widetilde{\Psi}$ is non--decreasing, it is clear that conditions~(\ref{conditiondecroissancepsi}) are satisfied, so that the conclusions of Theorem~\ref{CNSapproxunifarithnonmetr} hold true. In what follows, the metrical result of Corollary~\ref{resultatprincikhintchiunif} will be proved for the set $\mathcal{U}(\Psi)\cap [0,a]$ which, for the sake of simplicity, shall still be denoted by $\mathcal{U}(\Psi)$~: it should be clear that this suffices to establish Corollary~\ref{resultatprincikhintchiunif} in full generality.

More precisely, it will be shown that~:
\begin{itemize}
\item[a)] if the sum $\sum_{Q\ge 1}\frac{1}{Q^2\Psi(Q)}$ converges, then, for almost all $\xi\in [0,1]\backslash\Q$, there exists an integer $k_0(\xi)\ge 1$ such that, for all $k\ge k_0(\xi)$, $a_k(\xi)\le \tau\widetilde{\Psi}\left(q_k(\xi)\right)$. Hence the fact that $\lambda\left(\mathcal{U}\left(\Psi\right)\right)=1$ will follow from Theorem~\ref{CNSapproxunifarithnonmetr} and Remark~\ref{etsikgek_0} for a suitable choice of the parameter $\tau>0$.
\item[b)] if the sum $\sum_{Q\ge 1}\frac{1}{Q^2\Psi(Q)}$ diverges, then the set of $\xi\in [0,1]\backslash\Q$ such that, for all integer $M\ge 1$, there exist infinitely many indices $k\ge 1$ such that $b|q_{k-1}(\xi)$ and $a_{k}(\xi)\ge M \widetilde{\Psi}\left(q_k(\xi)\right)$ has strictly positive measure. By virtue of~(\ref{cxiexpression}) in Theorem~\ref{CNSapproxunifarithnonmetr}, an element $\xi$ belonging to the latter set cannot belong to the set
\begin{equation}\label{Vpsi}
\mathcal{V}\left(\Psi\right):=\bigcup_{c\ge 1}\mathcal{U}\left(c\Psi\right)
\end{equation}
whose complement has therefore strictly positive measure. Showing that $\mathcal{V}\left(\Psi\right)$ has either zero or full measure will then complete the proof in this case also.
\end{itemize}

The proof of Corollary~\ref{resultatprincikhintchiunif} requires a Borel--Berstein type technical lemma on continued fractions.

\subsubsection{A Borel--Berstein type technical lemma on continued fractions}

The classical theorem of Borel--Bernstein on continued fractions states that, given a sequence $\left(u_k\right)_{k\ge  1}$ of positive integers, if the sum $\sum_{k\ge 1}u_k^{-1}$ diverges, then, for almost all $\xi:=[0; a_1, a_2, \dots]$ in $[0,1)$, there exist infinitely many integers $k\ge 1$ such that $a_k\ge u_k$. Further, if the sum converges, then, for almost all $\xi:=[0; a_1, a_2, \dots]$ in $[0,1)$, there exist only a finite number of integers $k\ge 1$ such that $a_k \ge u_k$ (see for instance Theorem 1.11 in~\cite{bugeaudlivre1} for a proof). The following generalizes the Borel--Bernstein theorem and is the key step in proving Corollary~\ref{resultatprincikhintchiunif}.

\begin{lem}\label{lemborelbernstein}
Let $A\ge 1$ and $d\ge 1$ be integers. Denote by $\bm{f}:=\left( \bm{f}_k\right)_{k\ge 1}$ a sequence of functions such that, for every $k\ge 1$, the function $$\bm{f}_k~: \xi\in [0,1]\backslash\Q \mapsto \bm{f}_k(\xi)\in \llbracket 1, A\rrbracket^d$$ is measurable. Assume furthermore that $\varphi:=\left(\varphi_k \right)_{k\ge 1}$ is a sequence of positive integers for which there exists an integer $c\ge d+1$ such that the two series $\sum_{k=0}^{\infty}\varphi_k$ and $\sum_{k=0}^{\infty}\varphi_{ck}$ converge (resp.~diverge) simultaneously. For any $k\ge 1$, define the sets 
\begin{equation*}
E_k^d\left(\bm{f}_k, \varphi_k\right)\,:=\, \left\{ \xi\in [0,1]\backslash\Q \; : \; (a_k, a_{k+1}, \dots, a_{k+d-1})=\bm{f}_k\left(\xi\right)\, \mbox{ and }\, a_{k+d}\ge \varphi_k\right\}
\end{equation*}
and $$\mathcal{S}^d\left(\bm{f}, \varphi \right)\, := \, \limsup_{k\rightarrow \infty} E_k^d\left(\bm{f}_k, \varphi_k\right).$$

Then
$$
\lambda\left(\mathcal{S}^d\left(\bm{f}, \varphi \right)\right)  \begin{cases}
= 0 & \mbox{if } \sum_{k=0}^{\infty}\varphi_k^{-1}\, < \, \infty,\\
\ge \frac{\log 2}{4\left( 2(2A)^d\right)^4} & \mbox{if } \sum_{k=0}^{\infty}\varphi_k^{-1}\, = \, \infty.
\end{cases}
$$
\end{lem}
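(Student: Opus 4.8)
The plan is to prove Lemma~\ref{lemborelbernstein} by reducing it to the classical Borel--Bernstein theorem via a measure-theoretic comparison argument, exploiting the quasi-independence of the events $E_k^d(\bm{f}_k,\varphi_k)$ and the geometry of the cylinder sets $E_{\bm{k}}^{(t)}$ and $E_{\bm{k},u}^{(t)}$ introduced in part 8 of Lemma~\ref{lemfractioncontinuepropr}.

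\textbf{The convergence case.} For each $k$, the set $E_k^d(\bm{f}_k,\varphi_k)$ is contained in the set of irrationals whose $(k+d)$-th partial quotient is at least $\varphi_k$, whose measure, by~(\ref{lemdvlptfractcontinue8}) summed over all admissible cylinders of depth $k+d-1$, is $O(\varphi_k^{-1})$. Hence $\sum_k\lambda(E_k^d(\bm{f}_k,\varphi_k))<\infty$ and the Borel--Cantelli lemma gives $\lambda(\mathcal{S}^d(\bm{f},\varphi))=0$ immediately; here only the measurability of each $\bm{f}_k$ is needed so that the $E_k^d$ are measurable.

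\textbf{The divergence case.} This is the substantive part. First I would use~(\ref{lemdvlptfractcontinue8}) to obtain, for any fixed cylinder $E_{\bm{k}}^{(t)}$ of depth $k+d-1$ on which $\bm{f}_k$ is constant and equal to the last $d$ coordinates of $(t,\bm{k})$, the uniform lower bound $\lambda(E_k^d(\bm{f}_k,\varphi_k)\cap E_{\bm{k}}^{(t)})\ge \tfrac{1}{3\varphi_k}\lambda(E_{\bm{k}}^{(t)})$, so that locally the conditional probability of the event is $\asymp \varphi_k^{-1}$. The obstacle is that the sets $E_k^d(\bm{f}_k,\varphi_k)$ for different $k$ are \emph{not} independent, and worse, the constraint $(a_k,\dots,a_{k+d-1})=\bm{f}_k(\xi)$ makes $\bm{f}_k$ itself depend on $\xi$, so the events overlap in depth. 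The standard trick is to pass to a sparse subsequence: consider only indices of the form $ck,\,c(k+1),\dots$ where $c\ge d+1$, so that the blocks of partial quotients $[a_{ck},\dots,a_{ck+d}]$ governing consecutive events $E_{ck}^d$ are disjoint and hence the corresponding events are genuinely independent \emph{once we condition on the values of} $\bm{f}_{ck}$. Because $\sum_k\varphi_{ck}^{-1}=\infty$ by hypothesis, a quantitative (Paley--Zygmund or second-moment) version of the divergence Borel--Cantelli lemma applied within each cylinder $E_{\bm{k}}^{(t)}$ of depth $cK$ (for $K$ large) yields $\lambda\big(\limsup_{k\ge K}E_{ck}^d(\bm{f}_{ck},\varphi_{ck})\cap E_{\bm{k}}^{(t)}\big)\ge \kappa_0\,\lambda(E_{\bm{k}}^{(t)})$ for an absolute constant $\kappa_0>0$ depending only on $A$ and $d$; summing over all such cylinders and letting $K\to\infty$ gives $\lambda(\mathcal{S}^d(\bm{f},\varphi))\ge\kappa_0$. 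Tracking the constants through~(\ref{lemdvlptfractcontinue8}) — each event contributes a factor between $(3\varphi_k)^{-1}$ and $2\varphi_k^{-1}$, each block spans $d+1$ partial quotients each in $\llbracket1,A\rrbracket$ contributing a bounded distortion factor $(2A)^d$, and the second-moment argument costs another square — is exactly what produces the explicit bound $\tfrac{\log 2}{4(2(2A)^d)^4}$, the $\log 2=\lambda$ of a typical depth-one cylinder entering as the base case.

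\textbf{Where the difficulty lies.} The main obstacle is making the quasi-independence precise despite the moving target $\bm{f}_k(\xi)$: one cannot simply fix $\bm{f}_k$ in advance. The resolution is to stratify $[0,1]\backslash\Q$ by the (finitely many, namely $\le A^d$) possible values of $\bm{f}_{ck}(\xi)$ at each stage and run the second-moment estimate conditionally on each cylinder, which is legitimate because on a fixed cylinder of depth $ck+d-1$ the function $\bm{f}_{ck}$ is constant and the event $\{a_{ck+d}\ge\varphi_{ck}\}$ has controlled conditional measure by~(\ref{lemdvlptfractcontinue8}). The sparsity $c\ge d+1$ guarantees the blocks do not interlock; the hypothesis that $\sum\varphi_k^{-1}$ and $\sum\varphi_{ck}^{-1}$ diverge or converge together is precisely what lets us throw away the non-sparse indices with no loss. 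I would therefore organize the proof as: (i) reduce to the sparse subsequence $(ck)_k$; (ii) establish the conditional lower bound on cylinders via~(\ref{lemdvlptfractcontinue8}); (iii) run a second-moment/Paley--Zygmund argument on each cylinder to get a uniform positive-proportion lower bound on the conditional measure of the relevant $\limsup$; (iv) integrate over the partition into cylinders and optimize the constants to reach $\tfrac{\log 2}{4(2(2A)^d)^4}$.
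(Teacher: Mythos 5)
Your treatment of the convergence half matches the paper's and is fine: part~8 of Lemma~\ref{lemfractioncontinuepropr} gives $\lambda\{a_{k+d}\ge\varphi_k\}\ll\varphi_k^{-1}$ and Borel--Cantelli does the rest. The trouble is in the divergence half, where you assert that after thinning to the indices $ck$ with $c\ge d+1$ the events $E_{ck}^d(\bm{f}_{ck},\varphi_{ck})$ become ``genuinely independent once we condition on the values of $\bm{f}_{ck}$''. That is false: the partial quotients of a continued fraction expansion are not independent under Lebesgue measure, nor under the Gauss measure, and this does not improve by conditioning on cylinders. Disjointness of the index blocks $\{ck,\dots,ck+d\}$ and $\{cl,\dots,cl+d\}$ removes the \emph{definitional} overlap, but it does not make the $\sigma$-algebras they generate independent. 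Without independence, you cannot simply invoke the divergent direction of Borel--Cantelli or Paley--Zygmund; you need a quantitative control of the correlations $\lambda\bigl(E_{ck}^d\cap E_{cl}^d\bigr)$, and your sketch provides no mechanism for that. This is exactly what the paper supplies, and what your proposal is missing: it works with the Gauss measure $\mu$ and the Gauss map $T$, and its Lemma~\ref{lemme1aborelberni} (Khintchine's $\psi$-mixing estimate $\mu\bigl(E_{\bm{\alpha}}\cap T^{-k-d}F\bigr)=\mu(E_{\bm{\alpha}})\mu(F)\bigl(1+O(\theta^{\sqrt{k}})\bigr)$) is the substantive ingredient that turns the heuristic ``quasi-independence'' into a usable bound in Steps~2 and~3, after which Lemma~\ref{reciproqueborelcantelli} (the second-moment partial converse to Borel--Cantelli) finishes the job. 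Your plan has the right overall shape --- sparse subsequence, then second moment --- but without a mixing input there is a genuine gap.

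A second, smaller issue: your stratification step silently assumes that $\bm{f}_k$ is constant on cylinders of depth $k+d-1$. The lemma only assumes $\bm{f}_k$ is measurable, and the paper makes no cylinder-measurability hypothesis. The paper sidesteps this by decomposing $E_{ck}^d(\bm{f}_{ck},\varphi_{ck})=\bigsqcup_{\bm{\alpha}}\bigl(E_{ck}^d(\bm{\alpha},\varphi_{ck})\cap\bm{f}_{ck}^{-1}(\{\bm{\alpha}\})\bigr)$ and then treating $\bm{f}_{ck}^{-1}(\{\bm{\alpha}\})$ as an arbitrary measurable set to which the mixing lemma is applied --- which is precisely where the strength of Lemma~\ref{lemme1aborelberni} (valid for \emph{any} $\mu$-measurable $F$) matters. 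If you only have the one-step cylinder estimate~(\ref{lemdvlptfractcontinue8}) at your disposal, you cannot handle a general measurable $\bm{f}_k$; the mixing lemma is again what is needed.
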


\begin{rem}
The assumption of the existence of the constant $c$ is a restriction of a technical nature~: as will be clear from the proof, it plays no role but to ensure that for an element $x$ lying in the intersection $E_{ck}^d\left(\bm{f}_{ck}, \varphi_{ck}\right) \cap E_{cl}^d\left(\bm{f}_{cl}, \varphi_{cl}\right)$, where $k$ and $l$ are two distinct positive integers, the two blocks $\left(a_{ck}(x), \dots, a_{ck+d-1}(x), a_{ck+d}(x)\right)$ and $\left(a_{cl}(x), \dots, a_{cl+d-1}(x), a_{cl+d}(x)\right)$ do not overlap.
\end{rem}

\begin{nota}
In order to prove Lemma~\ref{lemborelbernstein}, the notation introduced in the statement of the result is kept. Two additional sets are defined as follows~: given positive integers $k$, $d$ and $\beta$, given $\bm{\alpha}\in \N^d$, let $$E_k^d\left(\bm{\alpha}, \beta\right)\,:=\, \left\{ \xi\in [0,1]\backslash\Q \; : \; (a_k, a_{k+1}, \dots, a_{k+d-1})=\bm{\alpha}\, \mbox{ and }\, a_{k+d}\ge \beta\right\}$$ and $$\widetilde{E_k^d}\left(\bm{\alpha}, \beta\right)\,:=\, \left\{ \xi\in [0,1]\backslash\Q \; : \; (a_k, a_{k+1}, \dots, a_{k+d-1})=\bm{\alpha}\, \mbox{ and }\, a_{k+d}= \beta\right\}.$$
\end{nota}

\begin{proof}[Proof of the convergent part of Lemma~\ref{lemborelbernstein}] The convergent part of Lemma~\ref{lemborelbernstein} follows in the same way as the convergent part of the theorem of Borel--Bernstein, which in turn is nothing but a consequence of the Borel--Cantelli lemma. Details are provided here for the sake of completeness.

Suppose $\sum_{k=0}^{\infty}\varphi_k^{-1}< \infty$ and let $E\left(\varphi_k\right):=\left\{\xi\in [0,1]\backslash\Q \; : \; a_k\ge \varphi_k \right\}$ ($k\ge 1$). From the uniqueness of the continued fraction expansion of an irrational, one gets for all $k\ge 1$, using point~(\ref{lemdvlptfractcontinue8}) from Lemma~\ref{lemfractioncontinuepropr}, $$\lambda\left(E\left(\varphi_{k+1}\right)\right) \, = \, \sum_{\bm{\alpha}\in\N^k}\sum_{u\ge \varphi_{k+1}}\lambda\left(E_{\bm{\alpha},u}\right) \, \underset{(\ref{lemdvlptfractcontinue8})}{\le}\, \sum_{\bm{\alpha}\in\N^k}\sum_{u\ge \varphi_{k+1}}\frac{2}{u^2}\lambda\left(E_{\bm{\alpha}}\right) \, = \, \sum_{u\ge \varphi_{k+1}}\frac{2}{u^2}\, \le \, \frac{2}{\varphi_{k+1}}\cdotp$$
Thus, the series $\sum_{k\ge 1}\lambda\left(E\left(\varphi_{k}\right)\right)$ converges. Since $\mathcal{S}^d\left(\bm{f}, \varphi \right) \subset \limsup_{k\ge 1} E\left(\varphi_{k}\right)$, the result follows from the Borel--Cantelli lemma.
\end{proof}

\begin{rem}\label{remborelberni}
The proof of the convergent part of Lemma~\ref{lemborelbernstein} is also valid if $d=0$ (the only defining condition of the set $E_k^0\left(\bm{f}_k,\varphi_k\right)$ is then that $a_{k}\ge \varphi_k$), in which case the integer $c$ in the assumptions can be taken as equal to 1.
\end{rem}

The proof of the divergence half of Lemma~\ref{lemborelbernstein} is more involved. The use of the Gauss measure $\mu$ will make it simpler. The latter is defined for any element $E$ of the Borel $\sigma$--algebra $\mathcal{B}_{[0,1]}$ of $[0,1]$ by the formula $$\mu\left(E\right):=\frac{1}{\log 2}\int_{E}\frac{\textrm{d}x}{1+x}\cdotp$$ It should be clear that 
\begin{equation}\label{lebesguegaussmesures}
\frac{\lambda}{2\log 2}\, \le \, \mu \, \le \, \frac{\lambda}{\log 2}\cdotp
\end{equation}
In particular, the Lebesgue measure $\lambda$ restricted to $[0,1]$ and the Gauss measure $\mu$ are mutually absolutely continuous and therefore have the same sets of full and null measure. Define furthermore the Gauss map $T$ as follows~: $$T~: x=[0;a_1,a_2, \dots] \, \in\, [0,1]\backslash\Q \, \mapsto \, \left\{\frac{1}{x}\right\} = [0; a_2, a_3, \dots] \, \in\, [0,1]\backslash\Q ,$$ where $\{x\}$ denotes the fractional part of a real number $x$. It is a well--known fact (see for example Theorem 3.7 in~\cite{einsiedlerward}) that the system $\left(T,\mu, \mathcal{B}_{[0,1]} \right)$ is ergodic in $[0,1]$ and so that $\mu$ is $T$ invariant.

Two classical lemmas, which will be used in the proof of the divergent part of Lemma~\ref{lemborelbernstein}, are now introduced. The first one is essentially due to Khintchine (see e.g.~\cite{khintchi1} or~\cite{khintchi2}).

\begin{lem}\label{lemme1aborelberni}
Let $\bm{\alpha}\in\N^d$, where $d\ge 1$. Denote by $E_{\bm{\alpha}}$ the set $$E_{\bm{\alpha}}:= \left\{\xi\in [0,1)\; : \; (a_1, \dots, a_d) = \bm{\alpha}\right\}.$$ Let $F$ be a $\mu$--measurable set in $[0,1]$.

Then, there exists an absolute constant $\theta\in (0,1)$ such that for any $k\ge 0$, $$\mu\left( E_{\bm{\alpha}} \cap T^{-k-d}\left(F\right)\right)\, = \, \mu\left(E_{\bm{\alpha}}\right) \mu\left(F\right)\left(1+O\left(\theta^{\sqrt{k}}\right) \right).$$ The implicit constant in this last equation is also absolute.
\end{lem}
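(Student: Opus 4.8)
The plan is to prove Lemma~\ref{lemme1aborelberni} by exploiting the ergodicity and, more precisely, the known exponential mixing of the Gauss map for the Gauss measure. First I would recall the structure of the set $E_{\bm\alpha}$: since the first $d$ partial quotients of $\xi$ are prescribed to be $\bm\alpha=(\alpha_1,\dots,\alpha_d)$, the set $E_{\bm\alpha}$ is precisely a \emph{fundamental interval} (a cylinder of rank $d$) of the continued fraction expansion, with endpoints $p_d/q_d$ and $(p_d+p_{d-1})/(q_d+q_{d-1})$ as in point~6 of Lemma~\ref{lemfractioncontinuepropr}, where $p_j/q_j$ are now the convergents of $[0;\alpha_1,\dots,\alpha_d]$. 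On this interval the $d$-fold iterate $T^d$ acts as a $C^2$ diffeomorphism onto $[0,1)$ (the branch corresponding to the digit string $\bm\alpha$), given by a Möbius transformation $T^d(x)=\frac{q_d x - p_d}{-q_{d-1}x+p_{d-1}}$ up to sign, whose derivative has bounded distortion on $E_{\bm\alpha}$.

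The key step is then the following. For a $\mu$-measurable $F\subset[0,1]$ one has, by the change of variables $y=T^d(x)$ restricted to the branch $E_{\bm\alpha}$,
\begin{align*}
\mu\!\left(E_{\bm\alpha}\cap T^{-k-d}(F)\right)
&=\frac{1}{\log 2}\int_{E_{\bm\alpha}}\mathbf{1}_{T^{-k-d}(F)}(x)\,\frac{\mathrm{d}x}{1+x}\\
&=\frac{1}{\log 2}\int_{0}^{1}\mathbf{1}_{T^{-k}(F)}(y)\,g_{\bm\alpha}(y)\,\mathrm{d}y,
\end{align*}
where $g_{\bm\alpha}$ is the push-forward of the density $\frac{1}{\log 2}\frac{1}{1+x}$ under the inverse branch. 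Bounded distortion of the inverse branch shows that $g_{\bm\alpha}(y)=\mu(E_{\bm\alpha})\,h_{\bm\alpha}(y)$ where $h_{\bm\alpha}$ is a probability density on $[0,1]$ whose logarithm has variation $O(1)$ uniformly in $\bm\alpha$ and $d$ (this is the classical Rényi / Gauss--Kuzmin distortion estimate). I would then invoke the exponential decay of correlations for the Gauss system: for a density $h$ of bounded variation (equivalently, bounded logarithmic oscillation), $\int_0^1 \mathbf{1}_{T^{-k}(F)}(y)\,h(y)\,\mathrm{d}y = \mu(F)\bigl(1+O(\theta^{k})\bigr)$ for some absolute $\theta\in(0,1)$, with the implied constant absolute; combining these gives the claimed identity with $\theta^{\sqrt k}$ in place of $\theta^k$, which is a weaker (hence also valid) form convenient for later use. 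Substituting back yields $\mu(E_{\bm\alpha}\cap T^{-k-d}(F))=\mu(E_{\bm\alpha})\mu(F)(1+O(\theta^{\sqrt k}))$.

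The main obstacle is packaging the regularity of the conditional density $h_{\bm\alpha}$ correctly: one must check that the distortion bound is \emph{uniform in the word $\bm\alpha$ and in its length $d$}, which is exactly where the standard argument via the Perron--Frobenius operator of $T$ (or, equivalently, the classical bounds $\tfrac12\le q_d^2\lambda(E_{\bm\alpha})\le 1$ from~(\ref{lemdvlptfractcontinue6}) together with the bounded ratio of derivatives of the Möbius branch on $E_{\bm\alpha}$) is needed. Once that uniformity is in hand, the mixing estimate is a black box (it is precisely the Gauss--Kuzmin--Lévy theorem with exponential remainder, attributed to Khintchine and available in the references~\cite{khintchi1,khintchi2} cited alongside the lemma), and the conclusion follows by the displayed computation. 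I would present the distortion lemma as a short self-contained claim and then quote the mixing estimate, keeping the write-up brief since, as the paper indicates, the result is classical.
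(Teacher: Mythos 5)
Your sketch is correct and reconstructs, in outline, precisely the argument behind the result the paper defers to: change of variables along the inverse branch of $T^d$ on the cylinder $E_{\bm\alpha}$, the uniform bounded--distortion estimate $1/2\le q_d^2\lambda(E_{\bm\alpha})\le 1$ and $|\psi_{\bm\alpha}'(y)|=(q_d+q_{d-1}y)^{-2}$ (giving a normalised conditional density of absolutely bounded variation on $[0,1]$, uniformly in $\bm\alpha$ and $d$), and then the quantitative Gauss--Kuzmin--L\'evy estimate in the $L^\infty$ form $\|\mathcal{L}^k\tilde h-1\|_\infty=O(\theta^k)$ for the transfer operator, which is exactly what yields the multiplicative ($\psi$--mixing) error $\mu(F)\cdot O(\theta^{\sqrt k})$ with absolute constants. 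The paper itself offers no argument and simply points to Philipp's explicit $\psi$--mixing estimate for the continued fraction digits, which is proved by this same chain of ideas, so your proposal is essentially the cited proof spelled out.
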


\begin{proof}
See~\cite{philippmetrical} for an explicit proof.
\end{proof}

The second lemma provides a partial converse to the Borel--Cantelli lemma.

\begin{lem}\label{reciproqueborelcantelli}
Let $\left(E_i\right)_{i\ge 0}$ be a sequence of $\mu$--measurable sets in $[0,1]$ such that $\sum_{i=0}^{\infty}\mu\left(E_i\right) = \infty$.

Then, $$\mu\left(\limsup_{i\rightarrow \infty}\, E_i\right) \, \ge \, \limsup_{i\rightarrow \infty} \left(\frac{\left(\sum_{k=1}^i \mu\left(E_k\right)\right)^2}{\sum_{1\le k,l\le i}\mu\left(E_k\cap E_l\right)}\right)\cdotp$$
\end{lem}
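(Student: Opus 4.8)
The statement is a classical \emph{partial converse to the Borel--Cantelli lemma} (often attributed to Kochen--Stone, and, in a slightly weaker form, to Erd\H{o}s--R\'enyi). The argument is purely measure--theoretic and is valid for any finite measure; the fact that $\mu$ is the Gauss measure plays no role here, only that $\mu\left([0,1]\right)<\infty$. The plan is to apply the Cauchy--Schwarz inequality to the truncated counting functions attached to the sets $E_i$.

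First I would introduce, for each integer $N\ge 1$, the function $f_N:=\sum_{k=1}^N \mathbf{1}_{E_k}$ on $[0,1]$, together with the quantities $s_N:=\sum_{k=1}^N \mu(E_k)$ and $d_N:=\sum_{1\le k,l\le N}\mu(E_k\cap E_l)$. By hypothesis $s_N=\int f_N\,\textrm{d}\mu \to\infty$, while expanding the square gives $d_N=\int f_N^2\,\textrm{d}\mu$; since all the summands defining $d_N$ are non--negative and the diagonal ones add up to $s_N$, one has $d_N\ge s_N>0$ for all $N$ large enough. As $f_N$ vanishes off $A_N:=\bigcup_{k=1}^N E_k$, the Cauchy--Schwarz inequality yields
\[
s_N^2=\left(\int_{A_N} f_N\,\textrm{d}\mu\right)^2\,\le\, \mu(A_N)\int_{A_N} f_N^2\,\textrm{d}\mu\,\le\, \mu(A_N)\,d_N,
\]
whence $\mu\!\left(\bigcup_{k=1}^N E_k\right)\ge s_N^2/d_N$.

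To pass to the $\limsup$ set I would then run the same computation with the tail family $(E_k)_{k\ge M}$ for a fixed $M\ge 1$: writing $s_{M,N}:=\sum_{k=M}^N\mu(E_k)$ and noting that $\sum_{M\le k,l\le N}\mu(E_k\cap E_l)\le d_N$, one gets $\mu\!\left(\bigcup_{k=M}^N E_k\right)\ge s_{M,N}^2/d_N$ for all $N\ge M$. Letting $N\to\infty$ and using continuity of $\mu$ along the increasing union $\bigcup_{k=M}^N E_k \uparrow \bigcup_{k\ge M}E_k$, this gives $\mu\!\left(\bigcup_{k\ge M}E_k\right)\ge \limsup_{N\to\infty} s_{M,N}^2/d_N$. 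Since $s_{M,N}=s_N-s_{M-1}$ and $s_N\to\infty$, for fixed $M$ the ratio $s_{M,N}/s_N$ tends to $1$, so $\limsup_{N\to\infty} s_{M,N}^2/d_N=\limsup_{N\to\infty} s_N^2/d_N$; hence $\mu\!\left(\bigcup_{k\ge M}E_k\right)\ge \limsup_{N\to\infty} s_N^2/d_N$ for every $M$. Finally, as $\mu$ is finite and $\bigcup_{k\ge M}E_k$ decreases to $\limsup_i E_i$ when $M\to\infty$, continuity from above gives $\mu(\limsup_i E_i)=\lim_{M\to\infty}\mu\!\left(\bigcup_{k\ge M}E_k\right)\ge \limsup_{N\to\infty}\bigl(\sum_{k=1}^N\mu(E_k)\bigr)^2\big/\sum_{1\le k,l\le N}\mu(E_k\cap E_l)$, which is the claim.

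There is no genuinely hard step here: the central inequality is a one--line application of Cauchy--Schwarz, and the only point deserving a little care is the transition from the finite--union estimate to the $\limsup$ — that is, checking that one may delete any finite initial segment of $(E_i)$ without degrading the bound. This is exactly where the divergence of $\sum_k\mu(E_k)$ is needed (to force $s_{M,N}/s_N\to1$) together with the monotonicity of the double sum under restriction to a tail.
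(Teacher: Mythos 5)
Your proof is correct, and it is the standard Cauchy--Schwarz (Kochen--Stone/Erd\H{o}s--R\'enyi) argument; the paper itself gives no proof but simply cites Bugeaud's book (p.~125), where essentially this same argument appears. The only point worth noting is that your self-contained treatment of the tail truncation (deleting a finite initial segment and using $s_{M,N}/s_N\to 1$, which is exactly where divergence is used) is handled carefully and correctly.
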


\begin{proof}
See e.g.~\cite{bugeaudlivre1}, p.125.
\end{proof}

\begin{proof}[Proof of the divergent part of Lemma~\ref{lemborelbernstein}] Suppose $\sum_{k=0}^{\infty}\varphi_k^{-1}= \infty$. The result will be established in four steps.

\paragraph{Step 1.} Given $k\ge 0$, the first step consists of finding a lower and an upper bound for $\mu\left(E_0^d\left(\bm{\alpha}, \varphi_k\right)\right)$ independently of $\bm{\alpha}\in \llbracket 1, A \rrbracket^d$. To this end, first notice that, from the uniqueness of the continued fraction expansion of an irrational, $$\mu \left(E_0^d\left(\bm{\alpha}, \varphi_k\right) \right) \,=\, \sum_{\alpha_{d+1}=\varphi_k}^{\infty} \mu\left(\widetilde{E_0^d}\left(\bm{\alpha}, \alpha_{d+1}\right)\right).$$ Now, it follows from~(\ref{lebesguegaussmesures}) that, given $\alpha_{d+1}\ge \varphi_k$, $$\frac{\lambda\left(\widetilde{E_0^d}\left(\bm{\alpha}, \alpha_{d+1}\right)\right)}{2\log 2} \, \le \, \mu\left(\widetilde{E_0^d}\left(\bm{\alpha}, \alpha_{d+1}\right)\right) \, \le \, \frac{\lambda\left(\widetilde{E_0^d}\left(\bm{\alpha}, \alpha_{d+1}\right)\right)}{\log 2}\cdotp$$ Furthermore, denoting $\bm{\alpha}\in \llbracket 1, A \rrbracket^d$ by $\bm{\alpha}=(\alpha_1, \dots, \alpha_d)$, (\ref{lemdvlptfractcontinue6}) and~(\ref{lemdvlptfractcontinue7a}) imply that $$\frac{1}{2\,(2A)^{2d}\,\alpha_{d+1}^2}\, \le \, \frac{1}{2^{2d+1}\,\prod_{k=1}^{d+1}\alpha_k^2}\, \le \, \lambda\left(\widetilde{E_0^d}\left(\bm{\alpha}, \alpha_{d+1}\right)\right)\, \le \, \frac{1}{\prod_{k=1}^{d+1}\alpha_k^2}\, \le\, \frac{1}{\alpha_{d+1}^2},$$ hence, on the one hand, $$\mu \left(E_0^d\left(\bm{\alpha}, \varphi_k\right) \right) \, \le \, \frac{1}{\log 2}\sum_{\alpha_{d+1}=\varphi_k}^{\infty}\frac{1}{\alpha_{d+1}^2}\, \le \, \frac{1}{(\log 2)\,\varphi_k}$$ and, on the other, $$\mu \left(E_0^d\left(\bm{\alpha}, \varphi_k\right) \right) \, \ge \,\frac{1}{4\,(2A)^{2d}\,\log 2} \sum_{\alpha_{d+1}=\varphi_k}^{\infty}\frac{1}{\alpha_{d+1}^2} \, \ge \, \frac{1}{4\,(2A)^{2d}\,\log 2\,(\varphi_k+1)}\cdotp$$ Thus, it has been proved that, for any $k\ge 0$ and any $\bm{\alpha}\in \llbracket 1, A \rrbracket^d$, 
\begin{equation}\label{concl1ereetape}
\frac{1}{4\,(2A)^{2d}\,\log 2\,(\varphi_k+1)} \, \le \, \mu \left(E_0^d\left(\bm{\alpha}, \varphi_k\right) \right) \, \le \, \frac{1}{(\log 2)\,\varphi_k}\cdotp
\end{equation}

\paragraph{Step 2.} The second step consists of finding \sloppy a lower bound for $\mu\left(E_{ck}^d\left(\bm{f}_{ck}, \varphi_{ck}\right)\right)$ for $k$ large enough depending on a fixed parameter $\epsilon\in (0,1)$. 

Let $k\ge 1$~: it should be clear that 
\begin{align*}
\mu\left(E_{ck}^d\left(\bm{f}_{ck}, \varphi_{ck}\right)\right) \, &= \, \sum_{\bm{\alpha}\in\llbracket 1, A \rrbracket^d} \mu\left(E_{ck}^d\left(\bm{f}_{ck}, \varphi_{ck}\right)\cap \bm{f}_{ck}^{-1}\left(\left\{\bm{\alpha}\right\}\right)\right) \\ 
&= \, \sum_{\bm{\alpha}\in\llbracket 1, A \rrbracket^d} \mu\left(E_{ck}^d\left(\bm{\alpha}, \varphi_{ck}\right)\cap \bm{f}_{ck}^{-1}\left(\left\{\bm{\alpha}\right\}\right)\right),
\end{align*} 
whence, using the $T$ invariance of the measure $\mu$ and Lemma~\ref{lemme1aborelberni}, 
\begin{align*}
\mu\left(E_{ck}^d\left(\bm{f}_{ck}, \varphi_{ck}\right)\right) \, &= \, \sum_{\bm{\alpha}\in\llbracket 1, A \rrbracket^d} \mu\left(E_{0}^d\left(\bm{\alpha}, \varphi_{ck}\right)\cap T^{-ck}\left(\bm{f}_{ck}^{-1}\left(\left\{\bm{\alpha}\right\}\right)\right)\right) \\ 
&= \, \sum_{\bm{\alpha}\in\llbracket 1, A \rrbracket^d} \mu\left(E_{0}^d\left(\bm{\alpha}, \varphi_{ck}\right)\right)\,\mu\left(\bm{f}_{ck}^{-1}\left(\left\{\bm{\alpha}\right\}\right)\right)\,\left(1+O\left(\theta^{\sqrt{ck-d}} \right)\right).
\end{align*}
Let $\epsilon \in (0,1)$. Choose $k_0\ge 1$ \sloppy large enough so that for all $k\ge k_0$, $\left(1+O\left(\theta^{\sqrt{ck-d}} \right)\right)\ge 1-\epsilon$. It then follows from~(\ref{concl1ereetape}) that 
\begin{align}
\mu\left(E_{ck}^d\left(\bm{f}_{ck}, \varphi_{ck}\right)\right) \, &\ge \, \frac{1-\epsilon}{4\,(2A)^{2d}\,\log 2 \, (\varphi_{ck}+1)}\sum_{\bm{\alpha}\in\llbracket 1, A \rrbracket^d} \mu\left(\bm{f}_{ck}^{-1}\left(\left\{\bm{\alpha}\right\}\right)\right) \nonumber\\ 
&= \, \frac{1-\epsilon}{4\,(2A)^{2d}\,\log 2 \, (\varphi_{ck}+1)}\cdotp \label{concl2emeetape}
\end{align}

\paragraph{Step 3.} The third step consists of finding \sloppy an upper bound for $\mu\left(E_{ck}^d\left(\bm{f}_{ck}, \varphi_{ck}\right)\cap E_{cl}^d\left(\bm{f}_{cl}, \varphi_{cl}\right)\right)$ for $k$ and $l$ large enough depending on a fixed parameter $\epsilon\in (0,1)$. 

Let $l\ge k\ge 1$~: it should be clear that 
\begin{align*}
\mu &\left( E_{ck}^d \left( \bm{f}_{ck}, \varphi_{ck}\right)\cap E_{cl}^d\left( \bm{f}_{cl}, \varphi_{cl}\right)\right) \\ 
&= \, \sum_{\bm{\alpha_1}, \bm{\alpha_2}\in\llbracket 1, A\rrbracket ^d} \mu\left(E_{ck}^d\left(\bm{f}_{ck}, \varphi_{ck}\right)\cap E_{cl}^d\left(\bm{f}_{cl}, \varphi_{cl}\right)\cap \bm{f}_{ck}^{-1}\left(\left\{\bm{\alpha_1}\right\}\right)\cap \bm{f}_{cl}^{-1}\left(\left\{\bm{\alpha_2}\right\}\right)\right)\\
&=\, \sum_{\bm{\alpha_1}, \bm{\alpha_2}\in\llbracket 1, A\rrbracket ^d} \mu\left(E_{ck}^d\left(\bm{\alpha_1}, \varphi_{ck}\right)\cap E_{cl}^d\left(\bm{\alpha_2}, \varphi_{cl}\right)\cap \bm{f}_{ck}^{-1}\left(\left\{\bm{\alpha_1}\right\}\right)\cap \bm{f}_{cl}^{-1}\left(\left\{\bm{\alpha_2}\right\}\right)\right),
\end{align*}
whence, using the $T$ invariance of the measure $\mu$ and Lemma~\ref{lemme1aborelberni}, 
\begin{align*}
\mu &\left( E_{ck}^d \left( \bm{f}_{ck}, \varphi_{ck}\right)\cap E_{cl}^d\left( \bm{f}_{cl}, \varphi_{cl}\right)\right) \\ 
&= \, \sum_{\bm{\alpha_1}, \bm{\alpha_2}\in\llbracket 1, A\rrbracket ^d} \mu\left(E_{0}^d\left(\bm{\alpha_1}, \varphi_{ck}\right)\cap T^{-ck}\left(E_{cl}^d\left(\bm{\alpha_2}, \varphi_{cl}\right)\cap \bm{f}_{ck}^{-1}\left(\left\{\bm{\alpha_1}\right\}\right)\cap \bm{f}_{cl}^{-1}\left(\left\{\bm{\alpha_2}\right\}\right)\right)\right)\\
&= \, \sum_{\bm{\alpha_1}, \bm{\alpha_2}\in\llbracket 1, A\rrbracket ^d} \mu\left(E_{0}^d\left(\bm{\alpha_1}, \varphi_{ck}\right)\right)\, \mu\left(E_{cl}^d\left(\bm{\alpha_2}, \varphi_{cl}\right)\cap \bm{f}_{ck}^{-1}\left(\left\{\bm{\alpha_1}\right\}\right)\cap \bm{f}_{cl}^{-1}\left(\left\{\bm{\alpha_2}\right\}\right)\right)\, \left(1+O\left( \theta^{\sqrt{ck-d}}\right) \right)\\
&= \, \left(1+O\left( \theta^{\sqrt{ck-d}}\right) \right)\times\\
&\qquad \left(\sum_{\bm{\alpha_1}, \bm{\alpha_2}\in\llbracket 1, A\rrbracket ^d} \mu\left(E_{0}^d\left(\bm{\alpha_1}, \varphi_{ck}\right)\right)\, \mu\left(E_{0}^d\left(\bm{\alpha_2}, \varphi_{cl}\right)\cap T^{-cl}\left(\bm{f}_{ck}^{-1}\left(\left\{\bm{\alpha_1}\right\}\right)\cap \bm{f}_{cl}^{-1}\left(\left\{\bm{\alpha_2}\right\}\right)\right)\right)\right)\\
&= \, \left(1+O\left( \theta^{\sqrt{ck-d}}\right)\right)\left(1+O\left( \theta^{\sqrt{cl-d}}\right)\right)\times\\
&\qquad \left( \sum_{\bm{\alpha_1}, \bm{\alpha_2}\in\llbracket 1, A\rrbracket ^d} \mu\left(E_{0}^d\left(\bm{\alpha_1}, \varphi_{ck}\right)\right)\, \mu\left(E_{0}^d\left(\bm{\alpha_2}, \varphi_{cl}\right)\right)\, \mu\left(\bm{f}_{ck}^{-1}\left(\left\{\bm{\alpha_1}\right\}\right)\cap \bm{f}_{cl}^{-1}\left(\left\{\bm{\alpha_2}\right\}\right)\right)\right).
\end{align*}
Given $\epsilon\in (0,1)$, choose $k_0\ge 1$ large enough so that for all $k\ge k_0$, $\left(1+O\left(\theta^{\sqrt{ck-d}} \right)\right)\le 1+\epsilon$. It then follows from~(\ref{concl1ereetape}) that, for all $l\ge k \ge k_0$,
 \begin{align}
\mu \left( E_{ck}^d \left( \bm{f}_{ck}, \varphi_{ck}\right)\cap E_{cl}^d\left( \bm{f}_{cl}, \varphi_{cl}\right)\right) \, & \le \, \frac{(1+\epsilon)^2}{\left(\log 2 \right)^2}\cdotp \frac{1}{\varphi_{ck}\varphi_{cl}} \sum_{\bm{\alpha_1}, \bm{\alpha_2}\in\llbracket 1, A\rrbracket ^d} \mu\left(\bm{f}_{ck}^{-1}\left(\left\{\bm{\alpha_1}\right\}\right)\cap \bm{f}_{cl}^{-1}\left(\left\{\bm{\alpha_2}\right\}\right)\right) \nonumber\\ 
&= \, \frac{(1+\epsilon)^2}{\left(\log 2 \right)^2 \varphi_{ck}\varphi_{cl}} \cdotp \label{concl3emeetape}
\end{align}

\paragraph{Step 4.} Let $\epsilon\in (0,1)$. Choose $k_0\ge 1$ large enough so that the conclusions of steps 2 and 3 hold for all $l\ge k\ge k_0$. By assumption on the integer $c\ge d+1$ in Lemma~\ref{lemborelbernstein} and from~(\ref{concl2emeetape}), the series $\sum_{k\ge 1} \mu\left(E_{ck}^d\left(\bm{f}_{ck}, \varphi_{ck}\right)\right)$ diverges. Therefore, Lemma~\ref{reciproqueborelcantelli} applies and, using~(\ref{concl2emeetape}), (\ref{concl3emeetape}) and noticing that $\limsup_{k\rightarrow \infty} E_{ck}^d\left(\bm{f}_{ck}, \varphi_{ck}\right) \subset \mathcal{S}^d\left(\bm{f}, \varphi\right)$, one gets
\begin{align*}
\mu\left(\mathcal{S}^d\left(\bm{f}, \varphi\right)\right) \, & \ge \, \limsup_{i\rightarrow \infty} \left(\left(\frac{\log 2\,(1-\epsilon)}{4\,(2A)^{2d}\,\log 2 \,(1+\epsilon)}\right)^2 \frac{\left(\sum_{k=k_0}^i (\varphi_{ck}+1)^{-1}\right)^2}{\sum_{k_0\le k , l \le i}(\varphi_{ck}\varphi_{cl})^{-1}}\right)\\
& \ge \, \left(\frac{1-\epsilon}{1+\epsilon}\right)^2\frac{1}{\left(8\,(2A)^{2d}\right)^2}\cdotp
\end{align*}
The result then follows from~(\ref{lebesguegaussmesures}) on letting $\epsilon$ tend to 0.
\end{proof}

\subsubsection{Completion of the proof of Corollary~\ref{resultatprincikhintchiunif}}

The completion of the proof of Corollary~\ref{resultatprincikhintchiunif} requires the introduction of a final two lemmas. The first one is well--known and the second one is elementary.

\begin{lem}\label{loi01applicationergodique}
Let $l\ge 2$ and $q\ge 0$ be integers. Consider the map $$\widetilde{T}~: x \imod{a} \, \mapsto \, lx+\frac{q}{l} \imod{a}.$$ Let $A\subset [0,a)$ such that $\widetilde{T}(A)\subset A$. 

Then $\lambda(A)\in\{0,a \}$.
\end{lem}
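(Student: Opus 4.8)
The plan is to recognize $\widetilde T$ as an ergodic, measure--preserving transformation of $[0,a)$ (with normalized Lebesgue measure) and then to convert the one--sided hypothesis $\widetilde T(A)\subseteq A$ into genuine invariance by enlarging $A$ along the inverse branches of $\widetilde T$.

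First I would record that $\widetilde T$ preserves $\lambda$ on $[0,a)\cong\R/a\Z$: the map $x\mapsto lx+q/l$ is piecewise affine with $l$ full branches, each of slope $l$, so the preimage of any subinterval $J\subset[0,a)$ is a disjoint union of $l$ intervals of total length $\lambda(J)$; hence $\lambda(\widetilde T^{-1}(E))=\lambda(E)$ for every Borel set $E$. Next I would note that $\widetilde T$ is ergodic. The rotation $h(x)=x-\tfrac{q}{l(l-1)}\bmod a$ of $\R/a\Z$ (a measure--preserving homeomorphism) conjugates $\widetilde T$ to $x\mapsto lx\bmod a$, which, after rescaling $[0,a)$ onto $[0,1)$, is the classical expanding map $y\mapsto ly\bmod 1$ of the circle; the latter is ergodic with respect to Lebesgue measure for every integer $l\ge 2$ (a standard computation with Fourier coefficients), and ergodicity is preserved under the conjugacy. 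Equivalently, one may simply invoke the standard $0$--$1$ law for affine expanding maps of the circle.

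With these two facts in hand, I would argue as follows. From $\widetilde T(A)\subseteq A$ one gets $A\subseteq\widetilde T^{-1}(A)$ (if $x\in A$ then $\widetilde T(x)\in\widetilde T(A)\subseteq A$), and then, applying $\widetilde T^{-1}$ repeatedly and using the monotonicity of preimages, an increasing chain $A\subseteq\widetilde T^{-1}(A)\subseteq\widetilde T^{-2}(A)\subseteq\cdots$. Put $A_\infty:=\bigcup_{n\ge 0}\widetilde T^{-n}(A)$. Since the chain is increasing, $\widetilde T^{-1}(A_\infty)=\bigcup_{n\ge 0}\widetilde T^{-(n+1)}(A)=A_\infty$, so $A_\infty$ is fully $\widetilde T$--invariant and ergodicity forces $\lambda(A_\infty)\in\{0,a\}$. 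On the other hand, measure preservation gives $\lambda(\widetilde T^{-n}(A))=\lambda(A)$ for all $n\ge 0$, and continuity of $\lambda$ along the increasing union yields $\lambda(A)=\lim_{n\to\infty}\lambda(\widetilde T^{-n}(A))=\lambda(A_\infty)\in\{0,a\}$, as required.

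The one point requiring care --- and the step I would regard as the crux --- is the direction of the inclusion: we are handed $\widetilde T(A)\subseteq A$ rather than $\widetilde T^{-1}(A)=A$, so the correct move is to \emph{grow} $A$ using the (multivalued) inverse, i.e.\ to take the increasing union $\bigcup_n\widetilde T^{-n}(A)$ rather than a decreasing intersection; this is precisely where it matters that $\widetilde T$ is non--invertible and measure--preserving, so that no term $\lambda(\widetilde T^{-n}(A))$ differs from $\lambda(A)$. Everything else is routine, modulo the implicit standing assumption that $A$ is Lebesgue measurable (needed even to make sense of $\lambda(A)$); measurability of each $\widetilde T^{-n}(A)$ is then automatic since $\widetilde T$ is Borel.
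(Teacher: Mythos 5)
Your proof is correct. Note that the paper provides no argument of its own for this lemma; it only cites Lemma~7 of an external reference. The classical route to such zero--one laws for expanding, non--invertible circle maps is a Lebesgue density point argument: if $\lambda(A)>0$, one picks a point where $A$ has density $1$ and uses the expansion of $\widetilde T$ (which sends small intervals onto full branches of $[0,a)$) to spread that high density over all of $[0,a)$, forcing $\lambda(A)=a$. Your argument instead establishes that $\widetilde T$ preserves Lebesgue measure and, via a rotational conjugacy, is ergodic (reducing to the standard ergodicity of $y\mapsto ly \bmod 1$ for $l\ge 2$), and then, as you correctly identify, the genuine content is converting the one--sided hypothesis $\widetilde T(A)\subseteq A$ into the full invariance $\widetilde T^{-1}(A_\infty)=A_\infty$ of the fattened set $A_\infty=\bigcup_{n\ge 0}\widetilde T^{-n}(A)$, whose measure equals $\lambda(A)$ by $\widetilde T$--invariance of $\lambda$ and monotone convergence along the increasing chain of preimages. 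Both routes are sound; the density argument is more elementary and self--contained, while yours imports the ergodicity of $y\mapsto ly \bmod 1$ as a black box but is conceptually cleaner. One cosmetic remark: with $T_0(x)=lx\bmod a$, your rotation $h(x)=x-q/(l(l-1))$ satisfies $\widetilde T\circ h=h\circ T_0$, so it intertwines $T_0$ with $\widetilde T$ rather than the other way round; this does not affect the argument, since either direction of conjugacy transfers ergodicity.
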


\begin{proof}
See for instance Lemma 7 in~\cite{sprin}.
\end{proof}

\begin{lem}\label{lemelemzsurbz}
Let $\alpha, \beta\in\Z/b\Z$. 

Then, there exist $i_1(\alpha, \beta)$ and $i_2(\alpha, \beta)$ in $\Z/b\Z$ such that, defining $$u_{-1}=\alpha, \quad u_0 = \beta, \quad u_1 = i_1(\alpha, \beta)u_0+u_{-1} \quad\mbox{and}\quad u_2 = i_2(\alpha, \beta)u_1+u_0,$$ one has $u_2=0$ in $\Z/b\Z$.
\end{lem}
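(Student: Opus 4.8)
The plan is to read the two-step recurrence $u_1=i_1u_0+u_{-1}$, $u_2=i_2u_1+u_0$ as a pair of Euclidean-type reductions modulo $b$, exploiting the freedom in the choice of $i_1$ and $i_2$. Fix integer lifts of $\alpha$ and $\beta$ and set $d:=\gcd(\alpha,\beta,b)$; this is a well-defined divisor of $b$ because replacing $\alpha$ or $\beta$ by a lift differing by a multiple of $b$ does not affect a gcd that already involves $b$. Since $u_1=i_1\beta+\alpha$ and $u_2=i_2u_1+\beta$, the desired conclusion $u_2\equiv 0\pmod b$ is exactly the solvability in $i_2\in\Z$ of the linear congruence $i_2u_1\equiv-\beta\pmod b$, which holds precisely when $\gcd(u_1,b)\mid\beta$. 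Everything therefore reduces to choosing $i_1$ so that $\gcd(i_1\beta+\alpha,b)\mid\beta$; I will in fact arrange $\gcd(i_1\beta+\alpha,b)=d$, which divides $\beta$ by definition of $d$.

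The step carrying the content is the classical coprimality fact: if $\gcd(x,y,n)=1$, then there is $t\in\Z$ with $\gcd(x+ty,n)=1$. Granting it, write $\alpha=d\alpha'$, $\beta=d\beta'$, $b=db'$ with $\gcd(\alpha',\beta',b')=1$; then $i_1\beta+\alpha=d(i_1\beta'+\alpha')$ and $b=db'$, so $\gcd(i_1\beta+\alpha,b)=d$ is equivalent to $\gcd(i_1\beta'+\alpha',b')=1$, and the fact (applied with $x=\alpha'$, $y=\beta'$, $n=b'$) produces an admissible $i_1$. To prove the fact I would argue prime by prime over the finitely many primes $p\mid n$: if $p\mid y$ then $p\nmid x$ (else $p\mid\gcd(x,y,n)$), so $x+ty\equiv x\not\equiv 0\pmod p$ for every $t$; if $p\nmid y$, the congruence $x+ty\equiv 0\pmod p$ has the unique solution $t\equiv-xy^{-1}\pmod p$, so at least one of the $p\ge 2$ residue classes mod $p$ avoids it. By the Chinese remainder theorem one picks $t$ lying, for each $p\mid n$ with $p\nmid y$, outside this bad class; then $p\nmid x+ty$ for all $p\mid n$, i.e. $\gcd(x+ty,n)=1$.

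It then remains to conclude: with $i_1$ chosen as above put $u_1:=i_1\beta+\alpha$, so that $\gcd(u_1,b)=d\mid\beta$; hence $i_2u_1\equiv-\beta\pmod b$ has a solution $i_2\in\Z$, and for it $u_2=i_2u_1+u_0=i_2u_1+\beta\equiv 0\pmod b$. Reading $i_1$ and $i_2$ modulo $b$ yields the required $i_1(\alpha,\beta)$ and $i_2(\alpha,\beta)$ in $\Z/b\Z$ (this reduction does not affect $u_1,u_2$ modulo $b$). I do not expect a genuine obstacle; the only points needing a little care are that $d$ and the two solvability conditions are phrased via integer lifts yet are independent of the lifts chosen, and the coprimality fact above, which is disposed of by the prime-by-prime Chinese-remainder argument (with the degenerate cases $b=1$ and $\beta\equiv 0\pmod b$ covered automatically).
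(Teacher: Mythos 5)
Your proof is correct, and it takes a genuinely different (and more elementary) route than the paper's. The paper first reduces to the case $\gcd(\alpha,\beta)=1$ by factoring $\gcd(\alpha,\beta)$ out of the whole sequence, then invokes Dirichlet's theorem on primes in arithmetic progressions to find $i_1$ with $\alpha+i_1\beta$ prime (hence invertible mod $b$), and finally takes $i_2=-u_0u_1^{-1}$. You instead work with $d=\gcd(\alpha,\beta,b)$ and observe that the problem only requires $\gcd(u_1,b)\mid\beta$, not invertibility of $u_1$; you achieve $\gcd(u_1,b)=d$ by reducing to the coprimality statement $\gcd(x,y,n)=1\Rightarrow\exists t:\gcd(x+ty,n)=1$, which you prove by a clean prime-by-prime Chinese Remainder argument. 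Your version avoids the heavy appeal to Dirichlet's theorem, treats the degenerate cases $\alpha\equiv 0$, $\beta\equiv 0$ or $b=1$ uniformly rather than separately, and sidesteps a slightly awkward point in the paper's reduction (the passage to the auxiliary sequence $(v_k)$ ``well-defined in $\Z/\gcd(\alpha,\beta,b)\Z$''). The paper's route is shorter to state but relies on a much deeper input; yours is longer to write out but entirely self-contained and arguably cleaner.
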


\begin{proof}
All equations in this proof must be read in $\Z/b\Z$.

If $\alpha=0$ (resp.~$\beta=0$), the choices $i_1(0, \beta)=1$ and $i_2(0, \beta)=-1$ (resp.~$i_1(\alpha, 0)=i_2(\alpha,0)=0$) independently of $\beta\in\Z/b\Z$ (resp.~of $\alpha\in\Z/b\Z$) are easily seen to satisfy the conclusion of the lemma.

Assume therefore that $\alpha\neq 0$ and $\beta\neq 0$. Viewing $\alpha$ and $\beta$ as integers in the interval $\llbracket 0, b-1\rrbracket$, one can then write $u_k = \gcd(\alpha, \beta)v_k$ for $k=-1,\dots,2$, where the finite sequence $\left(v_k \right)_{-1\le k \le 2}$, well--defined in $\Z/\gcd(\alpha,\beta, b)\Z$, satisfies in this ring a recurrence relation similar to that of $\left(u_k\right)_{-1\le k \le 2}$ in $\Z/b\Z$. Even if it means proving the result for the lift to $\Z /b \Z$ of the sequence $\left(v_k \right)_{-1\le k \le 2}$ which satisfies the conditions $v_{-1} \equiv \alpha/\gcd(\alpha, \beta) \imod{b}$ and $v_0 \equiv \beta/\gcd(\alpha, \beta) \imod{b}$, it may be assumed without loss of generality that $\gcd(\alpha, \beta)=1$.

From Dirichlet's theorem on arithmetic progressions, the sequence of integers $\left(\alpha +i\beta\right)_{i\ge 0}$ contains infinitely many primes. Therefore, there exists $i\in\Z/b\Z$ such that $\alpha + i\beta$ is invertible in $\Z/ b\Z$. Setting $i_1(\alpha, \beta)=i$ and $i_2(\alpha, \beta)=-u_0 u_1^{-1}$ leads to the result.
\end{proof}

\begin{proof}[Completion of the proof of Corollary~\ref{resultatprincikhintchiunif}]
It is well--known that, for almost all $\xi\in [0,1]\backslash\Q$, $$\lim_{k\rightarrow \infty} \sqrt[k]{q_k\left( \xi\right)}\, = \, \exp\left(\frac{\pi^2}{12\log 2} \right).$$ This follows for instance from Birkhoff's pointwise ergodic theorem applied to the ergodic system $\left(T,\mu, \mathcal{B}_{[0,1]} \right)$  introduced in the preceding subsection --- see Corollary~3.8 from~\cite{einsiedlerward} for details.

In particular, there exist two positive constants $B$ and $B'$ such that, for almost all $\xi\in [0,1]\backslash\Q$, there exists an integer $k_0$ depending on $B, B'$ and $\xi$ such that, for all $k\ge k_0$, $$\exp\left(B'k\right)\, \le \, q_k\left(\xi\right)\, \le \, \exp\left(Bk\right).$$ 
Set $\tau=\left( 8(ab)^2\max\{4,\widetilde{\Psi}(1)^{-1}\}\right)^{-1}$, which corresponds to the inverse of the constant given in~(\ref{cxiexpression}) (with $M=1$) for natural choices of the parameters $\kappa, \eta$ and $\gamma$ under the assumption of the monotonicity of $\widetilde{\Psi}$. Then, by virtue of Theorem~\ref{CNSapproxunifarithnonmetr} and Remark~\ref{etsikgek_0}, one gets on the one hand that, almost surely, 
\begin{equation}\label{ens1adivgce}
\left\{\xi\in [0,1]\backslash\Q \; :\; \exists k_0\left(B',\xi\right)\ge 0, \,\,\forall k\ge k_0\left(B',\xi\right), \,\, a_k\left(\xi\right)\le \tau\widetilde{\Psi}\left(e^{B'k}\right)  \right\} \, \subset \, \mathcal{U}\left( \Psi\right).
\end{equation}
On the other, from~(\ref{cxiexpression}), it should be clear that, almost surely,
\begin{align}
\bigcap_{M=1}^{\infty} &\left\{\xi\in [0,1]\backslash\Q \; :\; a_k\left(\xi\right)\ge M\widetilde{\Psi}\left(e^{Bk}\right)\; \mbox{ and }\; b|q_{k-1}\left(\xi\right) \; \mbox{i.o.} \right\} \nonumber\\ 
&\qquad \subset \, \bigcap_{c=1}^{\infty}\left([0,1]\backslash\mathcal{U}\left(c\Psi\right)\right)\, = \, [0,1]\backslash\mathcal{V}\left(\Psi\right), \label{ens1bcvgce}
\end{align}
where $\mathcal{V}\left(\Psi\right)$ has been defined in~(\ref{Vpsi}).

Notice also that for any $C>0$, the two series $\sum_{Q\ge 1}\left(\widetilde{\Psi}\left(e^{CQ}\right)\right)^{-1}$ and $\sum_{Q\ge 1}\left(Q^2\Psi(Q)\right)^{-1}$ converge (resp.~diverge) simultaneously. This follows from the change of variable $y=e^{Cx}$ in the corresponding integral $\displaystyle\int\frac{\textrm{d}x}{\widetilde{\Psi}\left(e^{Cx}\right)}$ under the assumption of the monotonicity of $ \widetilde{\Psi}$.

Assume first that the series $\sum_{Q\ge 1}\left(Q^2\Psi(Q)\right)^{-1}$ converges. It then follows from Lemma~\ref{lemborelbernstein} and Remark~\ref{remborelberni} that, for almost all $\xi\in [0,1]\backslash\Q$, there exist only finitely many indices $k\ge 0$ such that $a_k\left(\xi\right)\ge \tau\widetilde{\Psi}\left(e^{B'k}\right)$. Therefore, the set in the left--hand side of~(\ref{ens1adivgce}) has full measure, which completes the proof in this case.

Assume now that the series $\sum_{Q\ge 1}\left(Q^2\Psi(Q)\right)^{-1}$ diverges. From Lemma~\ref{lemelemzsurbz} and formulae~(\ref{lemdvlptfractcontinue5}), for any pair $(\alpha, \beta)\in\llbracket 1,b \rrbracket^2$ and any integer $k\ge 4$ , there exists $(a_{k-2}, a_{k-1}) = (i_1\left(\alpha, \beta\right), i_2\left(\alpha, \beta\right))\in \llbracket 1,b \rrbracket^2$ such that, if $q_{k-4}\equiv \alpha \imod{b}$ and $q_{k-3}\equiv \beta \imod{b}$, then $q_{k-1}\equiv 0 \imod{b}$. Apply then Lemma~\ref{lemborelbernstein} with $A=b$, $d=2$, $c=3$, $\varphi = \left(\widetilde{\Psi}\left(e^{Bk}\right)\right)_{k\ge 0}$ and, for $k\ge 4$, $$\bm{f}_k~: \xi\in [0,1]\backslash\Q \mapsto (i_1\left(\alpha, \beta\right), i_2\left(\alpha, \beta\right)) \in \llbracket 1, b\rrbracket^2 \; \mbox{ if } (q_{k-4}\left(\xi \right), q_{k-3}\left(\xi \right))\equiv (\alpha, \beta) \imod{b}.$$ For such a choice of $\varphi$ and of $\bm{f}:=\left(\bm{f}_k\right)_{k\ge 4}$, consider the sequence of sets $\left(\mathcal{S}^2\left(\bm{f}, M\varphi\right)\right)_{M\ge 1}$ as defined in Lemma~\ref{lemborelbernstein}. It should be clear that this is a sequence decreasing for inclusion and that, for any $M\ge 1$, \begin{equation}\label{S2fMphi}
\mathcal{S}^2\left(\bm{f}, M\varphi\right) \, \subset \, \left\{\xi\in [0,1]\backslash\Q \; :\; a_k\left(\xi\right)\ge M\widetilde{\Psi}\left(e^{Bk}\right)\; \mbox{ and }\; b|q_{k-1}\left(\xi\right) \; \mbox{i.o.} \right\}.
\end{equation} 
Furthermore, from the Monotone Convergence Theorem and Lemma~\ref{lemborelbernstein}, $$\lambda\left(\bigcap_{M=1}^{\infty}\mathcal{S}^2\left(\bm{f}, M\varphi\right)\right)\, = \, \lim_{M\rightarrow \infty}\lambda\left(\bigcap_{M'=1}^{M}\mathcal{S}^2\left(\bm{f}, M'\varphi\right)\right)\, = \, \lim_{M\rightarrow \infty}\lambda\left(\mathcal{S}^2\left(\bm{f}, M\varphi\right)\right)\, \ge \, \frac{\log 2}{2^{14}b^8}\cdotp$$
Combining this last inequality with~(\ref{ens1bcvgce}) and~(\ref{S2fMphi}) shows that the complement of the set $\mathcal{V}\left(\Psi\right)$ has strictly positive measure. Now, it should be clear from its definition in~(\ref{Vpsi}) that the set $\mathcal{V}\left(\Psi\right)$ is invariant under the map $ x \imod{a} \, \mapsto \, tx \imod{a}$, where $t\ge a$ is any integer congruent to 1 modulo $a$. From Lemma~\ref{loi01applicationergodique}, this implies that the complement of $\mathcal{V}\left(\Psi\right)$ in $[0,a]$ has full measure, that is, that $$\lambda\left(\mathcal{U}\left(\Psi\right)\right)\, \le \, \lambda\left(\mathcal{V}\left(\Psi\right)\right) \, =\, 0.$$ This completes the proof of Corollary~\ref{resultatprincikhintchiunif}.
\end{proof}

\section{Some applications}\label{applicationapproxarithm}

Some of the applications of the theory developed in this paper are mentioned in this section.

A Dirichlet type result can always be used to obtain bounds for certain types of exponential sums. In this respect, Theorem~\ref{CNSapproxunifarithnonmetr} may help to improve or specify some exponential sums when the numerators and the denominators of the rational approximants are restricted to prescribed arithmetic progressions --- see for example~\cite{erdosrenyi} or p.172 of~\cite{tenenbaumintro}. On the other hand, Walfisz proved in~\cite{walfiszelliptik} a very particular case of the Khinctchine type result given by Theorem~\ref{alakhint} in order to study the behaviour of the elliptic function $$\vartheta(z) = \sum_{n=-\infty}^{\infty}z^{n^2}$$ near its circle of convergence. 

Two specific applications of Theorem~\ref{approxasympnonmetr} and Corollary~\ref{resultatprincikhintchiunif} shall now be developed. The first one is mainly due to S.Hartman who was the first to notice in~\cite{hartmanconditionsupp} that a result such as Theorem~\ref{approxasympnonmetr} enables one to determine the value of $\liminf_{n\rightarrow \infty} \left(\sin n\right)^n$. This can be generalized thanks to the inhomogeneous version of Theorem~\ref{approxasympnonmetr} mentioned in Remark~\ref{generalizthapproxasymnonmetr}.

\begin{prop}
Let $\xi$ be an irrational which is not a rational multiple of $\pi$. Let also $\alpha\in\R$.

Then, $$\liminf_{n\rightarrow \infty} \left(\sin (n\xi +\alpha)\right)^n = \liminf_{n\rightarrow \infty} \left(\cos (n\xi +\alpha)\right)^n = -1$$ and  $$\limsup_{n\rightarrow \infty} \left(\sin (n\xi +\alpha)\right)^n =\limsup_{n\rightarrow \infty} \left(\cos (n\xi +\alpha)\right)^n =1.$$
\end{prop}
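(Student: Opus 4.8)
The plan is to reduce the problem to a statement about how closely $n\xi+\alpha$ can be forced to lie near an odd multiple of $\pi/2$ (for the sine) or near a multiple of $\pi$ with the right parity (for the cosine), with the approximation being of size $o(1/n)$, and then to analyse the limit of $(1\pm\varepsilon_n)^n$ when $n\varepsilon_n\to 0$. First I would record the elementary calculus fact: if $(\delta_n)$ is a real sequence with $n\delta_n\to 0$, then $(1+\delta_n)^n\to 1$ and, when the $n$ under consideration have a prescribed parity, $(-1+\delta_n)^n\to \mp 1$ accordingly; more precisely $\sin\theta = \pm 1 + O((\theta\mp\pi/2)^2)$ near $\theta=\pm\pi/2$, so if $n\xi+\alpha$ is within $o(n^{-1/2})$ of $\pi/2+2k\pi$ then $(\sin(n\xi+\alpha))^n\to 1$, and within $o(n^{-1/2})$ of $-\pi/2+2k\pi$ gives $(\sin(n\xi+\alpha))^n\to -1$ provided $n$ is odd (and $+1$ if $n$ is even). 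The same works for cosine near $0$ and $\pi$.

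Next I would invoke the inhomogeneous asymptotic approximation result of Remark~\ref{generalizthapproxasymnonmetr}: taking $a=b=2$, so that the arithmetic progressions single out a prescribed parity of numerator and denominator, one gets, for any $\xi\in\R\setminus\Q$ and any target $\alpha'$, infinitely many pairs $(m,n)$ with $|\xi(2n+s)-(2m+r)+\alpha'|\le \frac{1}{|2n+s|}$, hence a sequence $n_j\to\infty$ of a chosen parity with $n_j\xi$ within $O(1/n_j)$ of an integer of a chosen parity shifted by $\alpha'$. Here the crucial hypothesis ``$\xi$ not a rational multiple of $\pi$'' is exactly what guarantees the non-degeneracy condition $s\xi' + r + \alpha' \notin 2\xi'\Z + 2\Z$ (with $\xi'$ the relevant rescaling) so that the inhomogeneous statement, rather than the homogeneous fallback, applies: I would set $\xi' = \xi/\pi$ or work directly with $\xi/\pi$ and $\alpha/\pi$, choosing $\alpha'$ so that $n\xi+\alpha$ is pushed to within $O(1/n)$ of, say, $\pi/2 + 2\pi\cdot(\text{even integer})$, which forces $\sin(n\xi+\alpha)\to 1$ along that subsequence. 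Since $O(1/n)=o(n^{-1/2})$, the calculus lemma gives $(\sin(n\xi+\alpha))^{n}\to 1$ along $n=n_j$, proving $\limsup\ge 1$; as $|\sin|\le 1$ gives $\limsup\le 1$, we get equality, and symmetrically for $\liminf=-1$ by targeting $-\pi/2$ with odd $n$. The cosine cases are identical with $0$ and $\pi$ in place of $\pm\pi/2$.

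The main obstacle, such as it is, is bookkeeping the congruence/non-degeneracy condition correctly: one must check that for $\xi$ not a rational multiple of $\pi$ the quantity $s(\xi/\pi) - r + \alpha/\pi$ (or whichever exact combination arises after rescaling by $\pi$ and clearing the factor $ab=4$) genuinely fails to lie in the lattice $2(\xi/\pi)\Z + 2\Z$, so that Remark~\ref{generalizthapproxasymnonmetr} yields \emph{infinitely many} solutions rather than degenerating; the irrationality of $\xi/\pi$ combined with $\xi/\pi$ itself being irrational handles both the ``$\xi/\pi$ irrational'' input to the theorem and the lattice-avoidance. A secondary, purely routine point is verifying that the parities of $m$ and $n$ can be prescribed independently to land the argument near the correct point with the correct sign for $(-1)^n$; this is where the full four-parameter flexibility $(a,b,r,s)=(2,\cdot,2,\cdot,\cdot)$ (with $r,s$ chosen to fix parities) is used. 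Everything else is the elementary asymptotic $(1+o(1/n))^n\to 1$ together with the local quadratic behaviour of $\sin$ and $\cos$ at their extrema, so no further machinery is needed.
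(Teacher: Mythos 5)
Your proof is correct and follows essentially the same approach as the paper: apply the inhomogeneous version of Theorem~\ref{approxasympnonmetr} (Remark~\ref{generalizthapproxasymnonmetr}) to a rescaled copy of $\xi$ with congruence constraints chosen so that $n\xi+\alpha$ lands within $O(1/n)$ of the correct extremum with $n$ of the correct parity, then finish with the elementary $\bigl(-1+O(1/n^2)\bigr)^n\to\pm 1$. One small imprecision worth noting: the hypothesis that $\xi$ is not a rational multiple of $\pi$ ensures the rescaled number (the paper uses $\pi/(2\xi)$, you use $\xi/\pi$) is irrational, which is what Remark~\ref{generalizthapproxasymnonmetr} requires to apply at all; it does not by itself rule out the degenerate case $s\xi'+r+\alpha'\in b\xi'\Z+a\Z$, but this is harmless since Remark~\ref{generalizthapproxasymnonmetr} still yields infinitely many solutions with a possibly larger fixed constant in that case, which is all the $O(1/n)$ argument needs.
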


\begin{proof}
It shall be proved that $\liminf_{n\rightarrow \infty} \left(\sin (n\xi +\alpha)\right)^n =-1$. All the other equations can be established in a similar fashion.

From Remark~\ref{generalizthapproxasymnonmetr}, there exist two sequences of integers $\left(c_n\right)_{n\ge 1}$ and $\left(d_n\right)_{n\ge 1}$ with $d_n\ge 1$ and $\lim_{n\rightarrow \infty}d_n = \infty$ such that, for all $n\ge 1$, 
\begin{align}
&\left|d_n\frac{\pi}{2\xi}-c_n -\frac{\alpha}{\xi}\right|\, \le \, \frac{2}{d_n},\label{applitrigo1}\\
& d_n \equiv 3\imod{4}\;\mbox{ and}\label{applitrigo2}\\
& c_n\equiv 1 \imod{2}.\label{applitrigo3}
\end{align}
Therefore, for all $n\ge 1$,  $$\left|d_n\frac{\pi}{2\xi}-c_n -\frac{\alpha}{\xi}\right|\, = \, O\left(\frac{1}{d_n}\right)\, \underset{(\ref{applitrigo1})}{=} \, O\left(\frac{1}{c_n}\right)\cdotp$$ With the help of a Taylor expansion, this implies that $$\sin (c_n\xi +\alpha)\, = \, \sin \left(d_n\frac{\pi}{2} +O\left(\frac{1}{c_n}\right)\right)\, \underset{(\ref{applitrigo2})}{=} \, -1+O\left(\frac{1}{c_n^2}\right),$$ hence $$\left(\sin (c_n\xi +\alpha)\right)^{c_n}\, = \,\left(-1+o\left(\frac{1}{c_n}\right)\right)^{c_n}\, \overset{(\ref{applitrigo3})}{\underset{n\rightarrow \infty}{\longrightarrow}} \, -1.$$
\end{proof}

The second application is of a geometrical nature and exploits the link between approximation by rationals with numerators and denominators in given arithmetic progressions and pseudo--lattices in dimension 2. More precisely, a natural analogue of P\'olya's orchard problem is now discussed. The latter is formulated in~\cite{polyaszegopbs} (Chap.5, Problem 239) in this form~: ``How thick must be the trunks of the trees in a regularly spaced circular forest grow if they are to block completely the view from the center?''

Assume that the forest (or the orchard) is situated in a disk of integer radius $N\ge ab$ and that each point of the lattice $b\Z\times a\Z$ different from the origin and lying in this disk is the center of a tree of radius $r>0$ (here, $a,b\ge 1$). Minkowski's Convex Body Theorem can then be used to solve the visibility problem above and to obtain that the choice of $r=ab/N$ blocks the view from the center (see for instance Lemma~3 in~\cite{kruskyforet}). Allen in~\cite{allenforet} computed the infimum of all radii of trees preventing an observer situated at the origin from seeing a point outside the forest and Kruskal generalized this result to more general configurations of trees (see~\cite{kruskyforet}).

In what follows, the horizon will be said to be visible from the origin in the direction given by a line $\Delta$ passing through the origin if, given a forest of a prescribed type lying in the half--plane $\{x\ge 0\}$, the line $\Delta$ does not intersect any of the trees in the forest. 

For the forests under consideration, the latter will be planted in a subset of $\Lambda\cap \{x\ge 0\}$, where $\Lambda$ is the pseudo--lattice $\Lambda = (b\Z +s)\times (a\Z+r)$, with $a,b,r$ and $s$ satisfying~(\ref{contraintesdebase}) and $r\neq 0$ or $s\neq 0$. 

The connection between Diophantine approximation and the problem of visibility is then given by this simple fact~: an inequality of the type $|(bn+s)\xi-(am+r)|\le c$, where $c\ge 0$ is real, $\xi$ is irrational and $(m,n)\in\Z^2$, precisely means that the vertical segment joining the point $(bn+s, am+r)\in\Lambda$ to the line $\Delta~: y=\xi x$ has a length less than $c$. Therefore, the intersection between $\Delta$ and the closed ball centered at $(bn+s, am+r)$ with radius $c$ is non--empty~: if the latter ball represents a tree in a forest, the horizon is not visible in the direction given by $\Delta$.

For the sake of simplicity, the results will be stated from a qualitative point of view~: although possible, none of the constants mentioned below will be made effective.

\paragraph{Geometrical interpretation of Theorem~\ref{approxasympnonmetr}.} The forest is defined this way~: a tree of radius $(ab)/(4(bn+s))$ is planted at each point $(bn+s, am+r)\in\Lambda\cap\{x> 0\}$. The observer is situated at the origin in a glade of any shape but with bounded diameter (cf.~Figure~\ref{foretasyfigure}).

From Theorem~\ref{approxasympnonmetr}, for any line of sight with irrational slope, the observer will never see the horizon, no matter how big the glade is. From Remark~\ref{rationelsapproxasympnonmetr}, it is however possible to see the horizon along a direction given by a line with rational slope if, for instance, the glade contains a disk centered at the origin with sufficiently large radius. On the other hand, the validity of Conjecture~\ref{conjabsur4} would imply that there exist angles of sight with irrational slope if the constant $ab/4$ were to be replaced by another one small enough in the value of the radii of the trees (again, provided that the glade at the origin is \emph{big enough}).

\begin{minipage}{0.4\textwidth}
\begin{figure}[H]
\includegraphics[height=8cm]{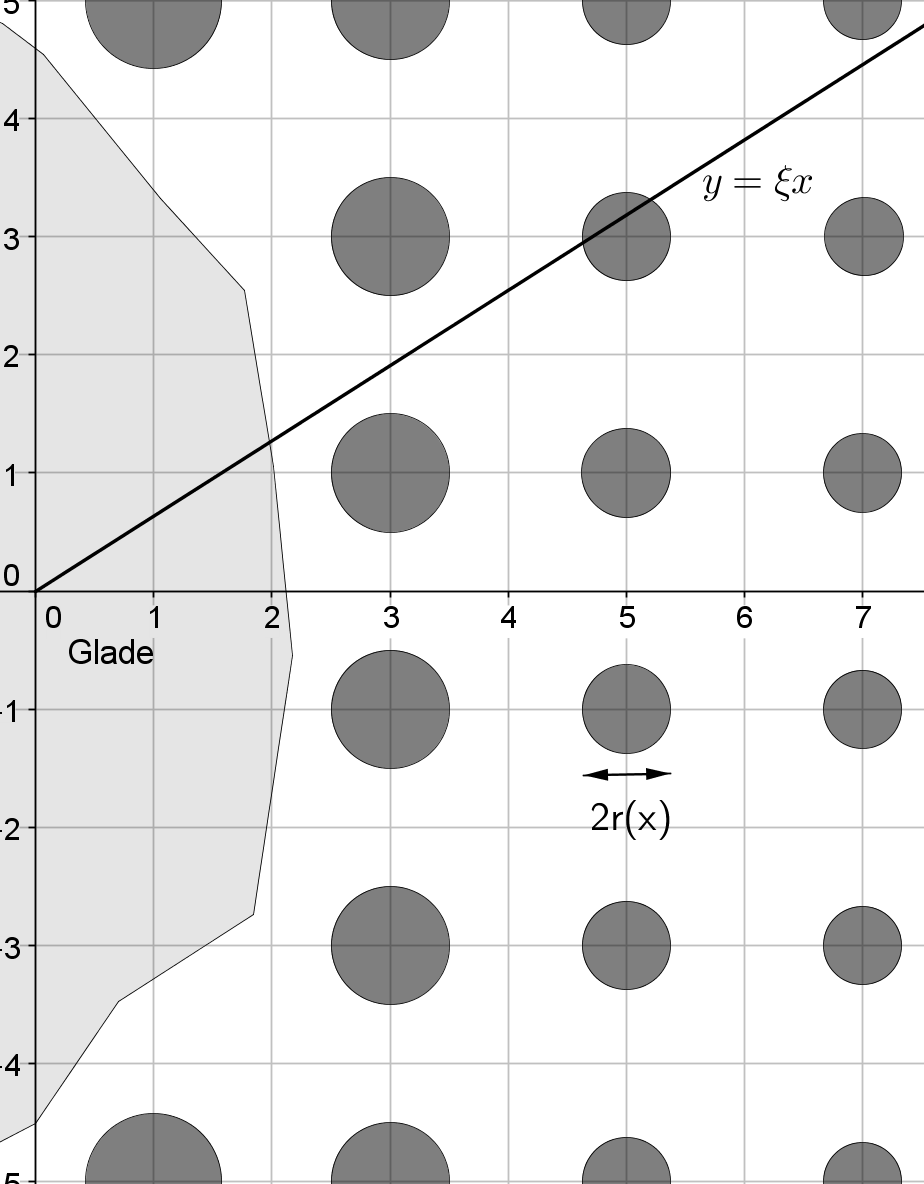}
\caption{{\footnotesize Geometric interpretation of Theorem~\ref{approxasympnonmetr} in the case of the pseudo--lattice $\Lambda = (2\Z +1)^2$. Each tree centered at $(2n+1, 2m+1)$ has radius $r(2n+1)=1/|2n+1|$.}}
\label{foretasyfigure}
\end{figure}
\end{minipage}
\hspace{8ex}
\begin{minipage}{0.4\textwidth}
\begin{figure}[H]
\includegraphics[height=8cm]{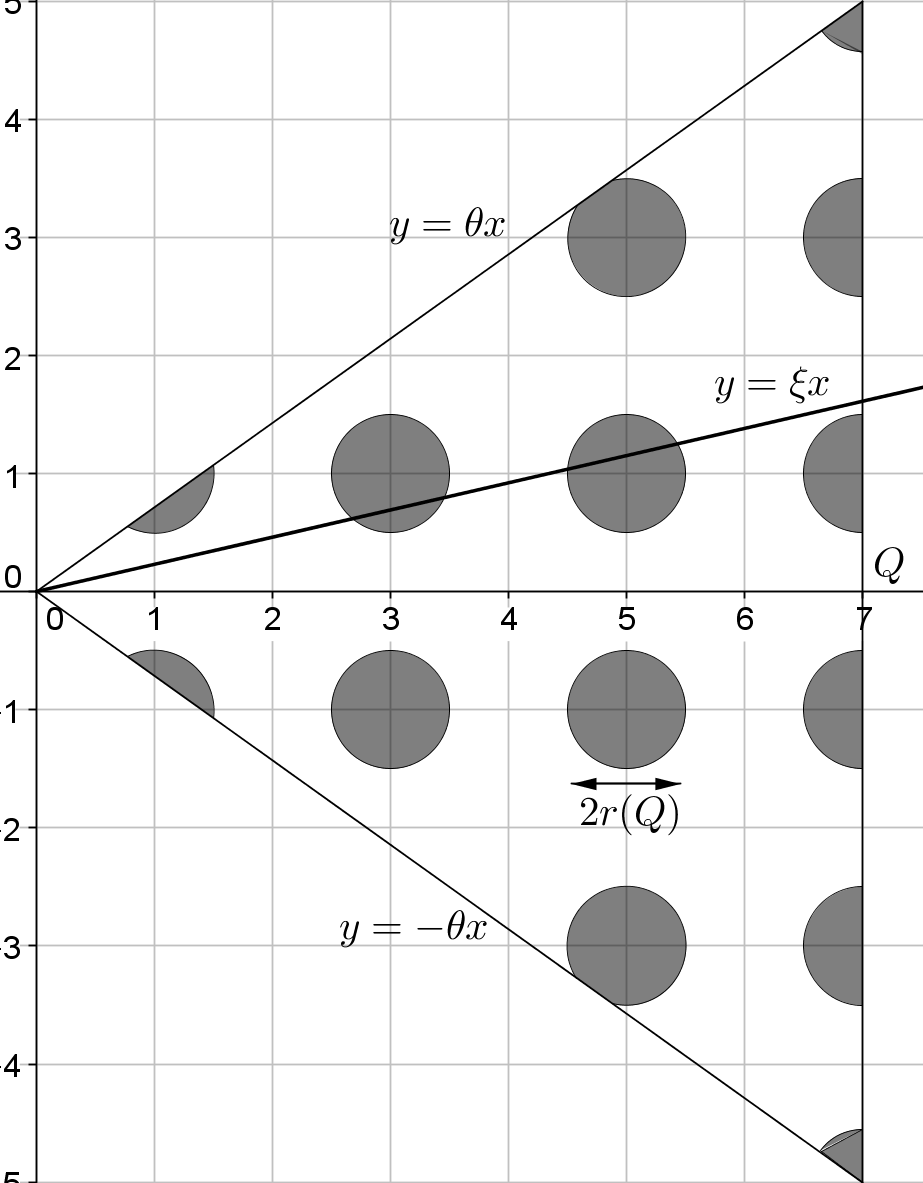}
\caption{{\footnotesize Geometric interpretation of Corollary~\ref{resultatprincikhintchiunif} in the case of the pseudo--lattice $\Lambda = (2\Z +1)^2$. The orchard has depth $Q$ and all the trees have the same radius $r(Q)>0$.}}
\label{foretuniffigure}
\end{figure}
\end{minipage}

\paragraph{Geometrical interpretation of Corollary~\ref{resultatprincikhintchiunif}.} Given $\theta >0$ and $Q\ge 1$, the forest --- which will more conveniently be referred to as an orchard --- is defined this way (see also Figure~\ref{foretuniffigure})~: a tree of radius $r(Q)>0$ is planted at each element of the set $\Lambda \cap \mathcal{L}$, where $$\mathcal{L}:=\left\{(x,y)\in\R^2\; : \; 0\le x\le Q \;\textrm{ and }\; -\theta x \le y \le \theta x \right\}.$$ From Corollary~\ref{resultatprincikhintchiunif}, if the radius of the trees is chosen in such a way that $r(Q)=\log Q/ Q$, then, for almost all $\xi\in (-\theta, \theta)$, there exist arbitrarily large values of $Q$ such that the horizon is visible in the direction $y=\xi x$. On the other hand, if $r(Q)=(\log Q)^{1+\epsilon}/Q$ for some $\epsilon >0$, then, provided that the depth $Q$ of the orchard is large enough (depending on $\xi$), the horizon is never visible in the direction $y=\xi x$ for almost all $\xi \in (-\theta, \theta)$.

\renewcommand{\abstractname}{Acknowledgements}
\begin{abstract}
The author would like to thank Pat McCarthy for suggesting the problem. He is also indebted to his PhD supervisor Detta Dickinson for discussions which helped to develop ideas put forward. His work is supported by the Science Foundation Ireland grant RFP11/MTH3084.
The author would like to thank the referee for his careful reading of a first draft of the manuscript and for valuable comments.
\end{abstract}

\bibliographystyle{plain}
\bibliography{diophapproxarithm}

\end{document}